\documentclass[11pt,reqno]{amsart}

\usepackage[active]{srcltx}
\usepackage{graphicx}
\usepackage{color}
\usepackage{enumerate,cleveref}
\usepackage{amssymb}
\usepackage{overpic}
\usepackage{amsfonts,amsmath,amssymb}
\usepackage{epstopdf}
\usepackage{algorithmic}
\usepackage{mathrsfs}
\usepackage{amsopn}
\usepackage{bm}
\usepackage{algorithm2e}
\usepackage{array,booktabs}
\usepackage{subcaption}

\newtheorem{lemma}{Lemma}[section]
\newtheorem{corollary}[lemma]{Corollary}
\newtheorem{theorem}[lemma]{Theorem}
\newtheorem{proposition}[lemma]{Proposition}


\addtolength{\textwidth}{2cm} \addtolength{\oddsidemargin}{-1cm}
\addtolength{\evensidemargin}{-1cm} \addtolength{\textheight}{1cm}
\addtolength{\topmargin}{-0.5cm}


\newcommand{\Bx}{\mathbf{x}}
\newcommand{\Bv}{\mathbf{v}}
\newcommand{\By}{\mathbf{y}}
\newcommand{\Be}{\mathbf{e}}
\newcommand{\Br}{\mathbf{r}}
\newcommand{\Bz}{\mathbf{z}}
\newcommand{\Beps}{\bm{\epsilon}}

\newcommand{\Hd}{\mathbf{\hat{d}}}
\newcommand{\Hx}{\mathbf{\hat{x}}}

\title{Nonlinear Iterative Hard Thresholding for Inverse Scattering}

\author{Anna C.~Gilbert}
\address{Department of Mathematics \\ University of Michigan \\ Ann Arbor, MI 48109}
\email{annacg@umich.edu}
\author{Howard W.~Levinson}
\address{Department of Mathematics \\ University of Michigan \\ Ann Arbor, MI 48109}
\email{levh@umich.edu}
\author{John C.~Schotland}
\address{Department of Mathematics and Department of Physics \\ University of Michigan \\ Ann Arbor, MI 48109}
\email{schotland@umich.edu}

\begin{document}

\begin{abstract}
We consider the inverse scattering problem for sparse scatterers. An image reconstruction algorithm is proposed that is based on a nonlinear generalization of iterative hard thresholding. The convergence and error of the method was analyzed by means of coherence estimates and compared to numerical simulations.
\end{abstract}

\maketitle

\section{Introduction}
\subsection{Background}
Scattering experiments are of fundamental importance in nearly every branch of physics. They also arise in numerous applied fields including optics, seismology and biomedical imaging. In this paper, we consider the following experimental setup. A wave field is incident on a medium  and the resulting scattered field is measured. The spatially-dependent properties of the medium are encoded in coefficients that arise in the wave equation. Depending on the physical setting, the associated inverse scattering problem is to reconstruct one or more coefficients from boundary measurements. Much is known about theoretical aspects of the problem, especially on matters of uniqueness, stability and reconstruction~\cite{cakoni_colton,cakoni_colton_again,chadan,colton_kress,colton_kress_again,
isakov,kirch}. By uniqueness we mean the injectivity of the forward map from the coefficients, sometimes referred to as the scattering potential, to the scattered field. Stability refers to continuity of the inverse map from the scattered field to the potential. We note that inverse scattering problems are typically ill-posed, which means that the inverse map must be suitably regularized to achieve stable inversion. 

The forward map depends nonlinearly on the scattering potential. Linearization of the forward map is referred to as the first Born approximation (FBA). The ISP within the FBA is relatively well understood. The development of reconstruction methods for the nonlinear ISP is an area of active research. These include optimization, qualitative and direct methods. There is also considerable interest in developing reconstruction algorithms for scatterers that are known a priori to be sparse~\cite{Borcea_2015,0266-5611-27-3-035013,0266-5611-33-6-060301,PMC2858419}. 
For instance, similar algorithms to iterative hard thresholding (IHT) have been applied to the ISP in the FBA~\cite{Fanngiang_Remote,0266-5611-26-3-035008}. We note that IHT has also proven to be effective in many applications other than inverse scattering~\cite{blu_iht}.

In this paper we apply a generalization of IHT to the nonlinear ISP. Our results provide nonlinear corrections to inversion within the FBA. In particular, we characterize the convergence and error of the method. The analysis is primarily carried out within the framework of coherence estimates. Extensions to restricted isometry property (RIP) estimates are also provided. Numerical simulations are used to illustrate the results.
 
\subsection{Related work}
A nonlinear version of IHT was proposed in \cite{blu_nliht}. An analysis of this algorithm was performed by making use of the restricted isometry property (RIP). However, many physical measurement systems of interest do not obey even moderately strong RIP bounds. Moreover, computationally efficient methods to compute RIP constants are not available, thereby preventing the practical verification of RIP bounds. Thus, sparse imaging in deterministic settings is usually characterized by the mutual coherence. We note that coherence calculations for various scattering experiments have been reported~\cite{0266-5611-26-3-035008,5728925}.

\subsection{Organization}
The paper is organized as follows. In~\cref{sec:iht}, we state the nonlinear IHT algorithm and provide a convergence analysis based on coherence estimates. Relevant background on inverse scattering is then introduced in ~\cref{sec:scatt}.  In ~\cref{sec:invscatteringIHT} we apply the nonlinear IHT algorithm to inverse scattering.  An analysis of the resulting algorithm is presented in~\cref{sec:4}. Our results are illustrated with numerical simulations in~\cref{sec:coherence}. A related analysis of nonlinear IHT for inverse scattering using RIP is given in Appendix B.

\section{Preliminaries and Background}
\label{sec:prelim}
Throughout the paper, matrices will be denoted by uppercase letters and vectors will be denoted by bold lowercase letters.  For a matrix $A$ with entries $A_{ij}$, we denote its $j$th column by $A_j$.  We denote the adjoint of $A$ by $A^*$.  For an $N\times1$ vector $\Bx$, $D_\Bx$ is the $N\times N$ diagonal matrix with diagonal entries given by $\Bx$ and $\Hx$ is the vector of unit length in the same direction as $\Bx$.  We define the max norm of a matrix $A$ as $\|A\|_{\max}=\max_{ij}|A_{ij}|$. We also let $\Omega\subset \mathbb{R}^3$ denote a bounded domain with smooth boundary $\partial\Omega$.

\subsection{Iterative Hard Thresholding}
\label{sec:iht}
Consider the linear system 
\begin{equation}
\label{eq:linear}
	\By = \Phi \Bx + \Beps,
\end{equation}  
where $\Phi$ is an $M\times N$ matrix, $\Bx$ is an $s$-sparse vector of length $N$ with $s$ non-zero entries, and $\Beps$ is a vector of length $M$. We will refer to $\Phi$ as the sensing matrix and $\Beps$ accounts for the effects of noise. When $\Phi$ has fewer rows than columns, the linear system is under-determined and the problem of finding or recovering $\Bx$ is frequently referred to as \emph{compressive sensing}. Here the matrix $\Phi$ \emph{senses} $\Bx$, albeit in a noisy fashion, and the vector $\By$ is a set of noisy observations of $\Bx$. Despite this problem being under-determined, if there are enough rows in $\Phi$ or observations of $\Bx$ (roughly $O(s \log N)$), then $\Bx$ can be recovered from its observations $\By$, provided $\Bx$ is $s$-sparse, up to the level of the noise $\Beps$.

IHT is an algorithm for sparse signal recovery, first proposed by Blumensath and Davies in \cite{blu_iht}, and subsequently analyzed and developed in \cite{blu_aiht,blu_iht2,blu_niht}.  The IHT algorithm recovers $\Bx$ by the iterative process 
\begin{equation}
\label{eq:iht}
	\Bx_{n+1} = H_s \left( \Bx_{n} + \Phi^*(\By - \Phi \Bx_{n} ) \right),
\end{equation}
where $H_s(x)$ is the nonlinear thresholding operator that sets all but the largest (in absolute value) $s$ elements of $\Bx$ to zero.  This algorithm is the classical Richardson first-order iteration with the additional application of a thresholding operator to promote sparsity \cite{doi:10.1137/1.9781611970944}.  

Since not all compressive sensing problems are linear, it is of interest to consider the nonlinear problem 
\begin{equation}
	\By = \Phi(\Bx) + \Beps ,
\end{equation}
where $\Phi$ is a nonlinear function rather than a linear one.  Blumensath~\cite{blu_nliht} proposed solving the above problem with an analogous nonlinear IHT algorithm. This algorithm replaces the iterative step in Eq.~\eqref{eq:iht} by
\begin{equation}
\label{eq:iht_nl}
	\Bx_{n+1} = H_s \left(\Bx_{n} + \Phi^*_{\Bx_n}(\By - \Phi_{\Bx_n} \Bx_{n}) \right) \ ,
\end{equation}
where $\Phi_{\Bx_n}$ is the linearization of $\Phi$ at the point $\Bx_n$.  Blumensath analyzed this algorithm by making use of the restricted isometry property (RIP), which gives a measure of how ``close'' to orthogonal the sensing matrix is when applied to $s$-sparse vectors.  In particular, a matrix $\Phi$ satisfies the RIP of order $s$ if there exists a constant $\delta_s$ with $0 < \delta_s < 1$
such that, for all $s$-sparse vectors $\Bx$, 
\begin{equation}
	(1-\delta_s)\|\Bx\|_2^2\le \|\Phi(\Bx)\|_2^2 \le (1+\delta_s)\|\Bx\|_2^2 \ .
\end{equation}
See~\cite{CandesTao2005} for further details. Rudelson and Vershynin~\cite{RudelsonVershynin2008} have shown that there are several randomized algorithms for generating matrices that satisfy the RIP, namely a random matrix with iid sub-Gaussian entries or a random submatrix of a matrix with bounded entries with a sufficient number of rows (drawn uniformly at random). There are also deterministic constructions of matrices that satisfy the RIP that have nearly but not quite optimal dimensions. 

While the analysis of both linear and nonlinear compressive sensing algorithms using RIP yields strong theoretical results, most scattering experiments with deterministic source and detector geometries do not yield sensing matrices for which RIP estimates are easy to verify either analytically or computationally.  Therefore, we utilize a weaker notion than RIP, namely coherence, to analyze the nonlinear IHT algorithm. The coherence of the sensing matrix $\Phi$ is defined by 
\begin{equation}
\label{eq:mu_def}
\mu(\Phi)=\max_{j\neq k} \frac{|\langle \Phi_j, \Phi_k\rangle|}{\|\Phi_j\|_2\|\Phi_k\|_2}.
\end{equation}
Equivalently, if $\Phi$ is column-normalized ($\|\Phi_j\|_2=1$ for all $j$), then the mutual coherence is the largest (in absolute value) off-diagonal entry in the matrix $\Phi^*\Phi$.  
This quantity is considerably easier to compute, does not depend on the putative sparsity of the underlying data, and does not require sophisticated algebraic constructions (for either random or deterministic matrices), in contrast to the RIP constructions. There are, however, deterministic constructions of RIP matrices based upon simple coherence estimates but these have far from optimal dimensions~\cite{NguyenShin2013}. 

There are convergence results for linear IHT expressed in terms of the coherence bound of the sensing matrix and the sparsity level of the unknown vector $\Bx$ in Eq.~\eqref{eq:linear} (see \cite{wang}). Our analysis, using coherence rather than RIP of the sensing matrix, of the nonlinear IHT algorithm shows that it behaves similarly to the linear IHT algorithm under two conditions: (1) the linearization $\Phi_{\Bx_n}$ obeys a coherence bound for all iterations and (2) the error in the linearization is sufficiently small.  

We now state our main result on the convergence of nonlinear IHT using coherence rather than RIP.  The proof is given in Appendix A.  
\begin{theorem}
\label{thm:coherence} 
{\it Let $\By=\Phi(\Bx)+\Beps$ and let $\{\Bx_n\}$ be the sequence generated  by the iteration 
\begin{equation}
\label{eq:nliht}
\Bx_{n+1}=H_s\left(\Bx_{n}+\Phi^*_{\Bx_n}(\By-\Phi_{\Bx_n} \Bx_{n})\right) \ ,
\end{equation}
where $\Phi_{\Bx_n}$ is the linearization of $\Phi$ at $\Bx_n$.  Suppose that $\Phi_{\Bx_n}$ obeys the coherence estimate $\mu(\Phi_{\Bx_n})\le \mu_0$ for all $n\ge1$.  If $\Bx$ is $s$-sparse and $\mu_0<\frac{1}{3s+1}$, then
\begin{equation}
\label{eq:iteration_thm_2.1}
	\| \Bx_n-\Bx \|_1 \le \rho \| \Bx_{n-1} -  \Bx \|_1 + (3s+1) \| \Phi^*_{\Bx_n} \Be_n \|_\infty \ ,
\end{equation}
where $\Be_n = \Phi(\Bx) - \Phi_{\Bx_n}\Bx + \Beps$ and $\rho = \mu_0(3s+1)$.
}
\end{theorem}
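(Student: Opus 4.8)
The plan is to analyze a single step of the iteration, say the passage from $\Bx_n$ to $\Bx_{n+1}=H_s(\Ba_n)$ with $\Ba_n=\Bx_n+\Phi^*_{\Bx_n}(\By-\Phi_{\Bx_n}\Bx_n)$, and then shift the index. First I would substitute $\By=\Phi(\Bx)+\Beps$ and add and subtract $\Phi_{\Bx_n}\Bx$ to expose the linearization error. Writing $\Be_n=\Phi(\Bx)-\Phi_{\Bx_n}\Bx+\Beps$, a direct rearrangement gives
\begin{equation}
\Ba_n-\Bx=\left(I-\Phi^*_{\Bx_n}\Phi_{\Bx_n}\right)(\Bx_n-\Bx)+\Phi^*_{\Bx_n}\Be_n .
\end{equation}
This is the key identity: it separates a near-orthogonality term, governed by the Gram matrix $\Phi^*_{\Bx_n}\Phi_{\Bx_n}$, from the genuinely nonlinear and noise contribution $\Phi^*_{\Bx_n}\Be_n$, which is precisely the error quantity appearing in the statement.

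Next I would estimate the first term in the $\ell_\infty$ norm using coherence. Taking the columns of $\Phi_{\Bx_n}$ to be normalized, as in the coherence setup, $\Phi^*_{\Bx_n}\Phi_{\Bx_n}$ has unit diagonal, so $I-\Phi^*_{\Bx_n}\Phi_{\Bx_n}$ has vanishing diagonal and off-diagonal entries bounded in modulus by $\mu(\Phi_{\Bx_n})\le\mu_0$. Hence, for every coordinate $j$,
\begin{equation}
\left|\left[(I-\Phi^*_{\Bx_n}\Phi_{\Bx_n})(\Bx_n-\Bx)\right]_j\right|\le \mu_0\sum_{k\ne j}|(\Bx_n-\Bx)_k|\le \mu_0\|\Bx_n-\Bx\|_1 ,
\end{equation}
so that $\|\Ba_n-\Bx\|_\infty\le \mu_0\|\Bx_n-\Bx\|_1+\|\Phi^*_{\Bx_n}\Be_n\|_\infty$.

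The remaining and most delicate step is the thresholding estimate: converting $\ell_\infty$ control on $\Ba_n-\Bx$ into an $\ell_1$ bound on $\Bx_{n+1}-\Bx=H_s(\Ba_n)-\Bx$. Let $T=\mathrm{supp}(\Bx)$ and $\Lambda=\mathrm{supp}(H_s(\Ba_n))$, each of cardinality at most $s$, so $\Bx_{n+1}-\Bx$ is supported on $\Lambda\cup T$. On $\Lambda$ the thresholding leaves $\Ba_n$ unchanged, so each such entry of $\Bx_{n+1}-\Bx$ is at most $\eta:=\|\Ba_n-\Bx\|_\infty$; the difficulty lies with the indices of $T$ discarded by $H_s$, where $\Bx_{n+1}$ vanishes but $\Bx$ does not. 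For such an index $j$, I would argue that since $j$ was not selected it is dominated by the $s$ retained coordinates, at least one of which lies outside $T$ and hence has modulus at most $\eta$ because $\Bx$ vanishes there; this forces $|\Bx_j|\le 2\eta$. Summing these per-coordinate bounds over $\Lambda\cup T$ produces a bound of the form $\|\Bx_{n+1}-\Bx\|_1\le(3s+1)\eta$, the factor of order $s$ being the combinatorial price of the thresholding. I expect this counting argument to be the main obstacle, since it requires care with the relative sizes of $T\setminus\Lambda$ and $\Lambda\setminus T$ and with the edge case in which $H_s$ retains fewer than $s$ nonzero entries; it is also where the precise constant $3s+1$ is pinned down.

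Finally I would combine the three estimates to obtain
\begin{equation}
\|\Bx_{n+1}-\Bx\|_1\le (3s+1)\mu_0\|\Bx_n-\Bx\|_1+(3s+1)\|\Phi^*_{\Bx_n}\Be_n\|_\infty ,
\end{equation}
which is the asserted inequality after shifting $n\mapsto n-1$, with $\rho=\mu_0(3s+1)$. The hypothesis $\mu_0<\tfrac{1}{3s+1}$ is exactly what guarantees $\rho<1$, so the estimate is a genuine contraction up to the error floor set by $\|\Phi^*_{\Bx_n}\Be_n\|_\infty$.
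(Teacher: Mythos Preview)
Your proposal is correct and follows essentially the same route as the paper. The paper's proof (Appendix~A) derives the identical key identity $\Bz_{n+1}-\Bx=(I-\Phi^*_{\Bx_n}\Phi_{\Bx_n})(\Bx_n-\Bx)+\Phi^*_{\Bx_n}\Be_n$, bounds the first term in $\ell_\infty$ by $\mu_0\|\Bx_n-\Bx\|_1$ via the coherence characterization (stated there as Lemma~A.1), and then invokes a thresholding lemma (Lemma~A.2, imported from \cite{wang}) to pass from $\ell_\infty$ control on the pre-threshold iterate to the $\ell_1$ bound $\|\Bx_n-\Bx\|_1\le(3s+1)\|\mathbf{w}_n-\Bx\|_\infty$; the only structural difference is that the paper packages the combinatorial thresholding step as a black-box lemma involving the auxiliary index set $S_n=I^n_+\cup I^{n-1}\cup I^*$ (note the use of the $(s{+}1)$st largest entry, which is where the ``$+1$'' in $3s+1$ enters), whereas you sketch the counting argument directly.
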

We note that the above result reduces to the known linear IHT convergence result when $\Phi$ is linear~\cite{wang}. Iterating \eqref{eq:iteration_thm_2.1}, we obtain for all $n\ge 1$,
\begin{equation} 
\label{eq:iterates}
\|\Bx_n-\Bx\|_1\le \rho^n\|\Bx_0-\Bx\|_1+\sum_{j=0}^n\rho^j (3s+1)  \|\Phi^*_{\Bx_j}\Be_j\|_\infty \ .
\end{equation}
Eq.~\eqref{eq:iterates} illustrates the importance of the error term in Theorem 2.1. Since by hypothesis $\rho < 1$, the condition that $\|\Phi^*_{x_j}e_j\|_\infty$ is bounded for all $j$ leads to a convergent geometric series in \eqref{eq:iterates}, which is summarized in the following corollary.  
\begin{corollary}
Let $\By=\Phi(\Bx)+\Beps$ and $\{\Bx_n\}$ be the sequence generated by \eqref{eq:nliht}. Suppose that $\Bx$ is $s$-sparse and $\rho=\mu_0(3s+1)<1$. If there exists a constant $M$ such that for all $j$, $\|\Phi^*_{\Bx_j}\Be_j\|_\infty<M$, then the error estimate \begin{equation}
\|\Bx_n-\Bx\|_1\le \rho^n\|\Bx_0-\Bx\|_1+\frac{M(3s+1)}{1-\rho}
\end{equation}
holds for all $n\ge 1$.
\end{corollary}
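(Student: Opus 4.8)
The plan is to derive the stated error estimate directly from the iterated inequality \eqref{eq:iterates}, which already encapsulates the single-step contraction of Theorem~\ref{thm:coherence}. Since all of the genuine analytic content---the coherence bound, the amplification factor $(3s+1)$, and the contraction constant $\rho$---is carried by Theorem~\ref{thm:coherence} and its iterate, the corollary reduces to summing a geometric series under the uniform error hypothesis.

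First I would recall that iterating the one-step bound \eqref{eq:iteration_thm_2.1} gives, for every $n\ge 1$,
\begin{equation}
\|\Bx_n-\Bx\|_1 \le \rho^n\|\Bx_0-\Bx\|_1 + (3s+1)\sum_{j=0}^n \rho^j \|\Phi^*_{\Bx_j}\Be_j\|_\infty \ .
\end{equation}
This is exactly \eqref{eq:iterates}; for a self-contained argument it follows by a routine induction on $n$, in which each step adds the error contribution $(3s+1)\|\Phi^*_{\Bx_n}\Be_n\|_\infty$ and multiplies the previously accumulated error by $\rho$.

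Next I would invoke the hypothesis $\|\Phi^*_{\Bx_j}\Be_j\|_\infty < M$, valid for all $j$, to replace every error term in the sum by the uniform bound $M$, yielding
\begin{equation}
\|\Bx_n-\Bx\|_1 \le \rho^n\|\Bx_0-\Bx\|_1 + M(3s+1)\sum_{j=0}^n \rho^j \ .
\end{equation}
Because $\rho=\mu_0(3s+1)<1$ by assumption, the finite partial sum is bounded by the convergent geometric series, $\sum_{j=0}^n \rho^j \le \sum_{j=0}^\infty \rho^j = \tfrac{1}{1-\rho}$. Substituting this bound produces the claimed inequality $\|\Bx_n-\Bx\|_1 \le \rho^n\|\Bx_0-\Bx\|_1 + \tfrac{M(3s+1)}{1-\rho}$.

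I do not anticipate any real obstacle. The two hypotheses are used in exactly the two places where they are needed: the condition $\rho<1$ guarantees convergence of the geometric series, and the uniform bound $M$ permits factoring the error out of the sum. The entire difficulty of the result lives in Theorem~\ref{thm:coherence}, so the corollary is a clean consequence obtained simply by passing from the finite sum to its geometric-series bound.
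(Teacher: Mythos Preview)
Your proposal is correct and matches the paper's own argument essentially line for line: the paper also derives the corollary by iterating \eqref{eq:iteration_thm_2.1} to obtain \eqref{eq:iterates}, bounding each $\|\Phi^*_{\Bx_j}\Be_j\|_\infty$ by $M$, and summing the resulting geometric series using $\rho<1$. There is nothing to add or correct.
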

The above results are applied in Section \ref{sec:analysis} to the inverse scattering problem (ISP), which we describe next. 

\subsection{Inverse scattering problem}
\label{sec:scatt}
Here we describe the formulation of the ISP that we require in this paper. For simplicity, we restrict our attention to scalar waves. Our results extend naturally to other types of scattering problems, though some modifications are required \cite{Born99a}.

We begin with the forward scattering problem. We consider the scattering of a time-harmonic scalar wave $u$ from a medium with scattering potential $\eta$, where we assume that $\eta$ is compactly supported in a bounded domain $\Omega$. The field satisfies the Helmholtz equation
\begin{equation}
\label{eq:helmholtz}
\nabla^2 u(\Bx)+k^2 (1+\eta(\Bx)) u(\Bx) = -S(\Bx) \ ,
\end{equation}
where $k$ is the wavenumber and $S(\Bx)$ is the source term.  The total field $u$ can be decomposed into the incident field $u_i$ and scattered field $u_s$ by $u=u_i+u_s$.  The incident field obeys Eq.~\eqref{eq:helmholtz} in the absence of scattering:
\begin{equation}
\nabla^2 u_i(\Bx)+k^2 u_i(\Bx) = -S(\Bx) \ .
\end{equation}
It follows that the scattered field obeys
\begin{equation}
\label{eq:scattered_field}
\nabla^2 u_s(\Bx)+k^2u_s(\Bx) = -\eta(\Bx) u(\Bx) \ ,
\end{equation}
which we supplement with the Sommerfeld radiation condition
\begin{equation}
\label{eq:radiation}
\lim_{r\to\infty} r\left(\frac{\partial u_s}{\partial r}-iku_s  \right)=0 \ ,
\end{equation}
where $r=|\Bx|$.  
Eq.~\eqref{eq:helmholtz} can be converted to an equivalent integral equation by introducing a Green's function. The Green's function $G$ satisfies
\begin{equation}
\nabla^2G(\Bx,\By)+k^2G(\Bx,\By)=-\delta(\Bx-\By) \ ,
\end{equation}
together with the Sommerfeld radiation condition. In three dimensions, $G$ is given by\begin{align}
G(\Bx,\By) =\frac{e^{ik|\Bx-\By|}}{4\pi|\Bx-\By|} \ .
\end{align}
It follows that $u_s$ obeys the integral equation
\begin{equation}
\label{eq:LS}
u_s(\Bx) =  k^2\int_\Omega G(\Bx,\By)\eta(\By)u(\By) d\By \ .
\end{equation}
By iterating the above, we obtain the Born series
\begin{align}
\label{eq:Born_series}
u_s(\Bx) = & k^2\int_\Omega G(\Bx,\By_1)\eta(\By_1)u_i(\By_1) d\By_1 \nonumber\\ & +k^4\int_\Omega\int_\Omega G(\Bx,\By_1)\eta(\By_1)G(\By_1,\By_2)\eta(\By_2)u_i(\By_2) d\By_1 d\By_2+\cdots \ , 
\end{align}
which yields an explicit formula for the scattered field.  Convergence of the Born series for scalar waves is considered in \cite{10.2307/43693726}.  In the case of weak scattering, the scattered field can be well approximated by the first term of  the Born series, which  yields the first Born approximation
\begin{equation}
u_s(\Bx) \approx k^2\int_\Omega G(\Bx,\By)\eta(\By)u_i(\By) d\By \ .
\end{equation} 
Multiple scattering is accounted for by including the higher order order terms in the Born series. 

It will prove useful to express the Born series in operator notation.  We define $\mathcal{G}$ to be the integral operator with kernel $G(\Bx,\By)$, where both $\Bx$ and $\By$ belong to $\Omega$, and define $\tilde{\mathcal{G}}$ to be the integral operator with the same kernel,  but with $\Bx$ outside of $\Omega$.  We also let $\mathcal{V}$ be the operator that corresponds to multipication by $k^2\eta$.  The Born series \eqref{eq:Born_series} thus becomes
\begin{align}
\label{eq:LS_ops}
u_s= & \tilde{\mathcal{G}}\left( \mathcal{V}+\mathcal{V}\mathcal{G}\mathcal{V}+ \mathcal{V}\mathcal{G}\mathcal{V}\mathcal{G}\mathcal{V} + \cdots \right)u_i \ .
\end{align}
By summing the geometric series we obtain
\begin{equation}
\label{eq:cont_for}
u_s = \tilde{\mathcal{G}}(1-\mathcal{V}\mathcal{G})^{-1}\mathcal{V}u_i  \ .
\end{equation} 
Introducing the $T$-matrix operator  which is defined by
\begin{equation}
\mathcal{T}=\mathcal{V}(1-\mathcal{G}\mathcal{V})^{-1}=(1-\mathcal{V}\mathcal{G})^{-1}\mathcal{V} \ ,
\end{equation} 
we find that 
\begin{equation}
u_s = \tilde{\mathcal{G}}\mathcal{T} u_i \ .
\end{equation}
  
The above formulation is quite general and holds for many experimental configurations.  For the remainder of the paper, we consider a specific scattering experiment in which the incident field is a plane wave, and measurements of the scattered field are taken in the far field on a ball of radius $R$ with $kR\gg 1$, as shown in Fig.~\ref{fig:experiment}.
That is, the incident field is given by
\begin{equation}
u_i(\Bx)=e^{ik\Hd\cdot\Bx } \ ,
\end{equation} 
where the unit vector $\Hd$ is the direction of propagation.
In the far field, the Green's function is asymptotically of the form
\begin{equation}
\frac{e^{ik|\Bx-\By|}}{4\pi|\Bx-\By|} \sim \frac{e^{ikR}}{4\pi R}e^{-ik \Hx \cdot \By} \ .
\end{equation}
Eq.~\eqref{eq:LS} then becomes
\begin{equation}
u_s(\Bx)\sim A(\Hx)\frac{e^{ikR}}{4\pi R} \ ,
\end{equation}
where the scattering amplitude $A$ is defined by
\begin{equation}
\label{eq:ampscat}
A(\Hx)= k^2\int_\Omega e^{-ik \Hx \cdot \By}\eta(\By)u(\By)d\By \ .
\end{equation}
Thus the scattered field behaves as an outgoing spherical wave with amplitude $A$.

The goal of the ISP is to reconstruct $\eta$ from measurements of $A$. To proceed, we suppose that $N_d$ measurements of the scattering amplitude in the directions $\Hx_1,\dots,\Hx_{N_d}$ are acquired for each of $N_s$ incident plane waves in the directions $\Hx_1,\dots,\Hx_{N_s}$.  We assemble these measurements in a $N_d\times N_s$ matrix that we denote by $Y$. We also discretize $\Omega$ into $N$ voxels of volume $h^3$ with centers $\Br_1,\dots,\Br_N$. The discretized version of \eqref{eq:cont_for} is then of the form
\begin{equation}
\label{eq:disc_forward}
Y=A(I-V\Gamma)^{-1}VB \ .
\end{equation}
Here $A$ is an $N_d\times N$ matrix, $B$ is an $N\times N_s$ matrix, $\Gamma$ is an $N\times N$ matrix, and $V$ is a $N\times N$ diagonal matrix.  These matrices are defined as 
\begin{align}
\label{eq:disc_ex}
&A_{mn}=e^{-ik \Hx_m \cdot \Br_m} \ , \\
&B_{mn}=e^{ik \Hd_n \cdot \Br_m} \ , \\
& \Gamma_{mn}=(1-\delta_{mn})G(\Br_m,\Br_n) \ , \\
& V_{mn} = \delta_{mn}k^2h^3\eta(\Br_m) \ .
\end{align}  

It is important to note that the dependence of $Y$ on $V$ is nonlinear. This can be seen by expanding the matrix inverse in \eqref{eq:disc_forward} to obtain
\begin{equation}
Y=A(I-V\Gamma)^{-1}VB = A\left(V+V\Gamma V+ V\Gamma V\Gamma V+\cdots\right)B \ ,
\end{equation}
which converges when $\|V\Gamma\|<1$. The first Born approximation corresponds to keeping only the first term in the series, so that \eqref{eq:disc_forward} becomes
\begin{equation}
\label{eq:AVB}
Y=AVB \ .
\end{equation}
Likewise, the $M$th Born approximation is given by
\begin{equation}
\label{eq:AVB_2}
Y =	A \left(\sum_{m=0}^{M-1} (V\Gamma)^m \right)V B  \ .
\end{equation}
Note that we can also introduce the $N\times N$ discretized $T$-matrix, which is defined by $T_{mn}=\mathcal{T}(\Br_m,\Br_n)$. Eq.~\eqref{eq:disc_forward} can then be written as
\begin{equation}
\label{eq:ATB}
Y = ATB \ .
\end{equation}

In the above discrete setting, the ISP consists of recovering the matrix $V$ from measurements of $Y$.   Our goal is to use suitably modified variants of IHT to solve the ISP when we assume that the desired matrix $V$ is sparse.   That is, we cast the ISP as a sparse signal recovery problem and use IHT to solve it.  Depending on the form of the approximation for the ISP, we use either linear or nonlinear IHT.  
   In the first Born approximation, this consists of solving the linear system \eqref{eq:AVB}. We solve this problem by making use of the linear IHT algorithm \eqref{eq:iht}. In the multiple scattering regime, we solve either \eqref{eq:AVB_2} or \eqref{eq:ATB} using  the nonlinear IHT algorithm \eqref{eq:iht_nl}.  
\begin{figure}[t]
\centering
\includegraphics[width=0.5\linewidth]{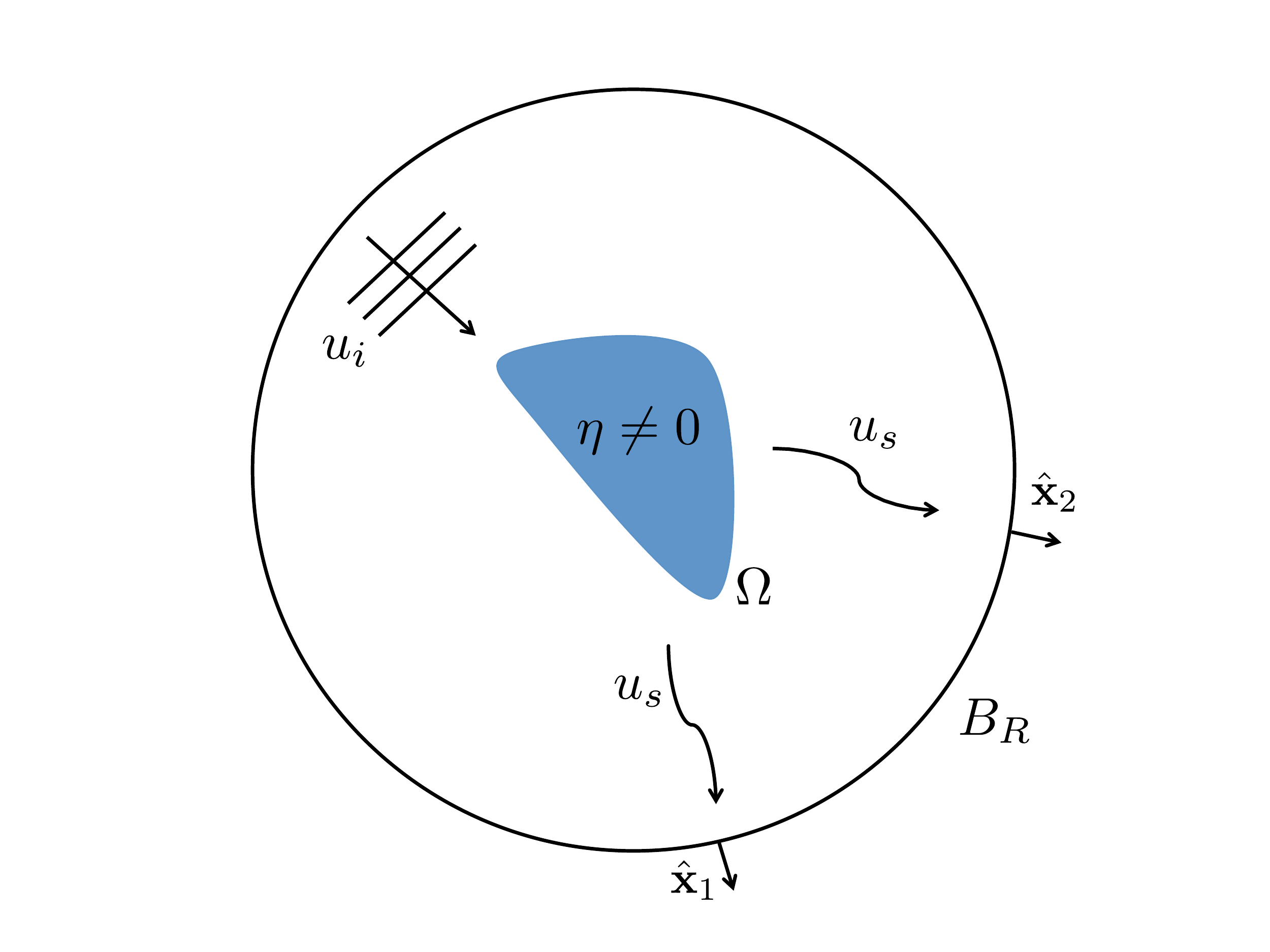}
\caption{Illustrating the scattering experiment.}
\label{fig:experiment}
\end{figure}

\section{Inverse Scattering and Iterative Hard Thresholding}
\label{sec:invscatteringIHT}

\subsection{Linear inverse problem}
\label{sec:invscatteringIHT_lin}
We now apply the linear IHT algorithm to the ISP in the first Born approximation.  Hence, we consider using the linearized forward equation \eqref{eq:AVB}, which is valid in weakly scattering regimes.  This equation, $AVB=Y$ where $V$ is diagonal, can be converted to a standard linear equation in vector form as
\begin{equation}
\label{eq:kv}
\Phi \Bv=\By \ ,
\end{equation}
where $\By$ is the $(N_sN_d)$-dimensional vector formed by stacking the columns of $Y$, $\Bv$ is the $N$-dimensional vector containing the diagonal entries of $V$, and $\Phi$ is the $ (N_sN_d)\times N$ matrix that is defined  by
\begin{equation}
\label{eq:Kdef}
\Phi_{(mn),j}=A_{mj}B_{jn} \ .
\end{equation}
Here $(mn)$ is a composite index where $1\le m\le N_d$ and $1\le n\le N_s$.

One only considers using the IHT algorithm if $\Bv$ is sparse.  Note that this corresponds to the diagonal matrix $V=D_\Bv$ being sparse along its diagonal. 
 Eq.~\eqref{eq:kv} can be solved by a straightforward application of the IHT algorithm \eqref{eq:iht}, but for our purposes, we are interested in applying the algorithm directly to the matrix equation \eqref{eq:AVB}.  While the two linearized forward equations are identical (as are the resulting formulations of IHT), this is not an empty exercise.  Using the full matrix form in \eqref{eq:AVB} yields a more tractable form of IHT, especially as we move to the nonlinear formulation of IHT and its analysis in the next section.  Additionally, and perhaps more importantly, these insights lead to computational advantages.   Fundamentally, these benefits come from treating the sensing matrix $\Phi$ for inverse scattering problems as two smaller separate sensing matrices.  Similar observations are made in \cite{Tmatrix1,hadamard}.     

To derive the matrix form of IHT for the linear ISP, we first note that 
\begin{align}
\label{eq:phiphi}
(\Phi ^*\Phi )_{jk} = & \sum_{\ell=1}^{N_sN_d} \Phi ^*_{j\ell}\Phi _{\ell k} 
=  \sum_{m=1}^{N_d}\sum_{n=1}^{N_s}\Phi ^*_{j,(mn)}\Phi _{(mn),k} \nonumber\\ 
= & \sum_{m=1}^{N_d} A^*_{jm}A_{mj} \sum_{n=1}^{N_s} B_{kn}B^*_{nj}
=  (A^*A)_{jk}(BB^*)_{kj} \ .
\end{align}
Thus $\Phi ^*\Phi =(A^*A)\circ(BB^*)^T$, where \ $\circ$ \ denotes the Hadamard (entrywise) product of two matrices.

To formulate IHT in terms of the diagonal matrix $V_n$ instead of the vector $\Bv_n$, we take advantage of the natural interplay between Hadamard products and diagonal matrices, namely 
\begin{equation}
\label{eq:had}
((A\circ B^T)\Bx)_j=(AD_{\Bx}B)_{jj} \ .
\end{equation}
Combining \eqref{eq:phiphi} and \eqref{eq:had}, the matrix product $\Phi ^*\Phi \Bv$ in \eqref{eq:iht} can be rewritten as 
\begin{equation}
(\Phi ^*\Phi \Bv)_j=\left(\left((A^*A)\circ(BB^*)^T\right)\Bv\right)_j=\left( A^*AVBB^* \right)_{jj} \ .
\end{equation}
Similarly, for the matrix vector product $\Phi^*y$, we have 
\begin{equation}
(\Phi ^*\By)_j = \sum_{m=1}^{N_d}\sum_{n=1}^{N_s}A^*_{jm}\By_{(mn)}B^*_{nj} = (A^*Y B^*)_{jj } \ .
\end{equation}
We introduce the linear ``diagonalizing'' operator $\mathcal{D}$ that acts on square matrices according to
\begin{equation}
\label{eq:Ddef}
\mathcal{D}(M)_{mn} := \delta_{mn}M_{mn} \ ,
\end{equation}
and can write the IHT algorithm of matrix equation \eqref{eq:AVB} by 
\begin{equation}
\label{eq:ihtmatrix}
V_{n+1}=H_s^\mathcal{D}\Big(\mathcal{D}\big(V_n+A^*\left(Y-AV_nB \right)B^*\big)\Big) \ ,
\end{equation}
where the nonlinear thresholding operator $H_s^\mathcal{D}$ only acts on the diagonal entries of the resulting matrix.

It is instructive to compare the matrix formulation of IHT in \eqref{eq:ihtmatrix} with the standard form of IHT in \eqref{eq:iht}. Recall that the dimensions of the matrices $\Phi, A$, and $B$ are $(N_sN_d)\times N, \  N_d\times N$, and $N\times N_s$ respectively.  Thus the computational complexity of IHT in the form \eqref{eq:iht} is $O(N_sN_dN)$.  This result is obtained by carrying out two sequential matrix-vector products. In contrast, naively forming the matrix $\Phi^*\Phi$ requires $O(N_sN_dN^2)$ operations. However, in matrix form \eqref{eq:ihtmatrix}, the matrix multiplications have complexity $O(\max\{N_dN^2,N_sN^2\})$. Depending on the relative values of $N_s,N_d,$ and $N$, this approach can be computationally advantageous.  

We note that the general framework of treating the fundamental unknown as a matrix instead of a vector provides flexibility for algorithmic developments. The unknown $V$, while known to be diagonal, has nonzero off-diagonal terms before the diagonalizing operator $\mathcal{D}$ acts during each iteration.  \cite{Tmatrix1} explores a similar algorithm with such flexibility. The definition used in \eqref{eq:Ddef} is a particular choice, and there may indeed be better choices for defining the diagonalizing operator.

\subsection{Nonlinear inverse problem}
\label{sec:invscatteringIHT_nlin}
We now consider nonlinear IHT, which applies the ISP in the multiple-scattering regime.  Returning to the forward equation $Y=A(I-V\Gamma)^{-1}VB$, we add additional corrections beyond the first Born approximation via the Born series.  
Our goal is to develop an IHT algorithm for Eq.~\eqref{eq:AVB_2}. 
 
The derivation of nonlinear IHT follows by making the transformation
\begin{equation}
\tilde{A}= A\left(\sum_{m=0}^{M-1}(V\Gamma)^m \right)  
\end{equation}
in the linear IHT algorithm.
Note that we could also rewrite the forward equation as 
\begin{equation}
AV\left(\sum_{m=0}^{M-1}(\Gamma V)^m \right)B=Y \ ,
\end{equation} 
and then make the substitution
\begin{equation}
\tilde{B}=\left(\sum_{m=0}^{M-1}(\Gamma V)^m \right)B \ .
\end{equation} 
This formulation is justified by the reciprocity of sources and detectors, which allows for interchanging $A$ and $B$, while leaving the forward equation unchanged (with the data matrix $Y$ becoming $Y^T$). Note that we can still think of having one larger sensing matrix $\Phi_\Bv$ (which analogous to Eq.~\eqref{eq:iht_nl} is the linearization of $\Phi$ at $V=D_\Bv$), which obeys the equation
\begin{equation}
\label{eq:phiphinl}
\Phi^*_\Bv\Phi_\Bv = (\tilde{A}^*\tilde{A})\circ(BB^*)^T = (A^*A)\circ(\tilde{B}\tilde{B}^*)^T \ .
\end{equation}

Making either of the above substitutions, we obtain the nonlinear IHT algorithm in the form\begin{align}
\label{eq:nlihtisp}
 V_{n+1}=H_s^\mathcal{D}\Big(\mathcal{D}\big(V_n+\tilde{A}_n^*\left(\Phi-\tilde{A}_nV_nB \right)B^*\big)\Big)\ , \quad \tilde{A}_n=A\left(\sum_{m=0}^M(V_n\Gamma)^m \right)\ ,
\end{align}
or alternatively
\begin{align}
\label{eq:nlihtisp2}
 V_{n+1}=H_s^\mathcal{D}\Big(\mathcal{D}\big(V_n+A^*\left(\Phi-AV_n\tilde{B}_n) \right)\tilde{B}_n^*\big)\Big) \ ,  \quad \tilde{B}_n=\left(\sum_{m=0}^M(\Gamma V_n)^m \right)B \ . 
\end{align}
This algorithm is summarized below as Algorithm \ref{algorithm:1}.
As $M\to\infty$, we can replace $\tilde{A}$ and $\tilde{B}$ in Eqs.~\eqref{eq:nlihtisp} and \eqref{eq:nlihtisp} with
\begin{equation}
\tilde{A}=A(I-V_n\Gamma)^{-1} \qquad\qquad \tilde{B}=(I-\Gamma V_n)^{-1}B \ ,
\end{equation}
or this fully nonlinear IHT algorithm can be written in T-matrix form
\begin{equation}
\label{eq:nlfull}
V_{n+1}=H_s^\mathcal{D}\Big(\mathcal{D}\big(V_n+V_n^{-1*}T_n^*A^*\left(\Phi-AT_nB \right)B^*\big)\Big) \ .
\end{equation}
Note that $V_n^{-1}$ is not well-defined since it is sparse along its diagonal. We thus compute $V_n^{-1}$ only for the nonzero entries of $V_n$, setting all other matrix elements of $V_n^{-1}$ to zero.

\RestyleAlgo{boxruled}
          \begin{algorithm}
     \caption{Nonlinear Iterative Hard Thresholding Algorithm}
     \label{algorithm:1}
     \DontPrintSemicolon
     \KwIn{$A,B,\Gamma, Y$} 
     \KwIn{$M, s$} 
     \KwOut{$V$} 
     \BlankLine
     Initialize $V_0$, $n=0$
     \BlankLine
     \While{stopping criteria not met}{
    $\tilde{A}_n=A\left(\sum_{m=0}^{M-1} (V_n \Gamma)^m \right)$\;
    $X = \mathcal{D}\left(  V_n+\tilde{A}_n^*\left(Y-\tilde{A}_nV_nB \right)B^*   \right) $\; 
    $V_{n+1}=H^{\mathcal{D}}_s(X)$\; 
    $n\gets n+1$\;
    }
     \end{algorithm}

We now remark on the computational complexity of the nonlinear IHT algorithm.  Since $V$ is $s$-sparse along its diagonal, the product $\Gamma V$ will be block sparse (up to permutation of rows), with only $s$ nonzero rows. Thus, the multiplication $(\Gamma V)^m$ requires $O(Nms^2)$ operations, and the subsequent formation of $\tilde{A}$ requires $O(sN_dN)$. Done in correct order, the remaining matrix products again require  $O(\max\{N_dN^2,N_sN^2\})$ operations.  Note that in the case when $M=\infty$, due to the sparseness of $V$, computation of the $T$-matrix has only complexity $O(s^3)$.

\section{Coherence Estimates for Inverse Scattering}
\label{sec:4}

In this section we apply Theorem \ref{thm:coherence} on convergence of the nonlinear IHT algorithm to the ISP. We begin by restating Theorem \ref{thm:coherence} using the notation from Section \ref{sec:invscatteringIHT}. 

\begin{theorem} 
\label{cor:thm2.1}
Let $Y=A(I-V\Gamma)^{-1}VB+E$ and $\{V_n\}$ be the sequence of diagonal matrices generated  by the nonlinear IHT iteration  
\begin{equation}
\label{eq:ref_scat_its}
V_{n+1}=H_s^\mathcal{D}\Big(\mathcal{D}\big(V_n+\tilde{A}_n^*\left(Y-\tilde{A}_nV_nB \right)B^*\big)\Big) \ ,
\end{equation}
where $\tilde{A}_n=A\left(\sum_{m=0}^{M-1}(V_n\Gamma)^m \right)$.  Let $\Phi_{\Bv_n}$ be the linearization about $V_n$ corresponding to $\tilde{A}_n$.  Assume that for all $n\ge1$, the coherence of $\Phi_{\Bv_n}$ is bounded by $\mu_0$.  If $V$ is $s$-sparse along its diagonal, then
\begin{equation}
	\| \Bv_n-\Bv \|_1 \le \rho \| \Bv_{n-1} -  \Bv \|_1 + (3s+1) \| \mathcal{D}[\tilde{A}^*  E_n B^*] \|_\infty ,
\end{equation}
where $E_n = A(I-V\Gamma)^{-1}VB-\tilde{A}_nVB+E$ and $\rho = \mu_0(3s+1)$.
\end{theorem}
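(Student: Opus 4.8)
The plan is to prove \cref{cor:thm2.1} by showing that the matrix iteration \eqref{eq:ref_scat_its} is, entry for entry, nothing other than the abstract nonlinear IHT iteration \eqref{eq:nliht} once diagonal matrices are identified with their vectors of diagonal entries, and then to invoke \cref{thm:coherence} directly. First I would fix the dictionary between the two formulations: the signal $\Bv$ is the diagonal of $V$, the iterate $\Bv_n$ is the diagonal of $V_n$, the data vector $\By$ is obtained by stacking the columns of $Y$, and the noise vector $\Beps$ by stacking the columns of $E$. The linearization $\Phi_{\Bv_n}$ is taken to be the $(N_sN_d)\times N$ matrix with entries $(\Phi_{\Bv_n})_{(mn),j}=(\tilde{A}_n)_{mj}B_{jn}$, i.e. the matrix obtained from \eqref{eq:Kdef} upon replacing $A$ by $\tilde{A}_n$; this is exactly the frozen-coefficient linear map $V\mapsto\tilde{A}_nVB$ used in the iteration, and by the computation leading to \eqref{eq:phiphinl} its Gram matrix is $\Phi^*_{\Bv_n}\Phi_{\Bv_n}=(\tilde{A}_n^*\tilde{A}_n)\circ(BB^*)^T$. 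Hence the coherence hypothesis $\mu(\Phi_{\Bv_n})\le\mu_0$ is a statement about precisely this object, and $s$-sparsity of $V$ along its diagonal is the same as $s$-sparsity of $\Bv$.

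The key step, which I also expect to be the main obstacle, is verifying that the diagonalized matrix expression inside \eqref{eq:ref_scat_its} coincides with the vector update inside \eqref{eq:nliht}; the subtlety is to confirm that freezing the coefficient to $\tilde{A}_n$ is the legitimate ``linearization'' whose coherence is the hypothesized quantity, and that the content discarded by the off-diagonal action of $\mathcal{D}$ is exactly the off-diagonal content that \eqref{eq:had} shows to be irrelevant to the diagonal update. Concretely, using the Hadamard--diagonal identity \eqref{eq:had} together with \eqref{eq:phiphinl}, I would show $(\Phi^*_{\Bv_n}\Phi_{\Bv_n}\Bv_n)_j=(\tilde{A}_n^*\tilde{A}_nV_nBB^*)_{jj}$ and, by the same identity applied to the data term, $(\Phi^*_{\Bv_n}\By)_j=(\tilde{A}_n^*YB^*)_{jj}$, exactly as in the linear derivation \eqref{eq:phiphi}. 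Subtracting, the $j$th diagonal entry of $\mathcal{D}(V_n+\tilde{A}_n^*(Y-\tilde{A}_nV_nB)B^*)$ equals the $j$th component of $\Bv_n+\Phi^*_{\Bv_n}(\By-\Phi_{\Bv_n}\Bv_n)$. Since $H^\mathcal{D}_s$ acts on the diagonal exactly as $H_s$ acts on the corresponding vector, the two schemes generate identical sequences, $\diag(V_n)=\Bv_n$ for all $n$.

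With the equivalence in hand, \cref{thm:coherence} applies verbatim, its hypotheses $\mu_0<\frac{1}{3s+1}$ and $s$-sparsity having been matched, and yields $\|\Bv_n-\Bv\|_1\le\rho\|\Bv_{n-1}-\Bv\|_1+(3s+1)\|\Phi^*_{\Bv_n}\Be_n\|_\infty$ with $\rho=\mu_0(3s+1)$ and $\Be_n=\Phi(\Bv)-\Phi_{\Bv_n}\Bv+\Beps$. It then remains only to rewrite $\Be_n$ and the error term in scattering notation. Under the dictionary, $\Phi(\Bv)$ is the vectorization of the full forward map $A(I-V\Gamma)^{-1}VB$, $\Phi_{\Bv_n}\Bv$ is the vectorization of $\tilde{A}_nVB$, and $\Beps$ is the vectorization of $E$, so $\Be_n$ is precisely the vectorization of $E_n=A(I-V\Gamma)^{-1}VB-\tilde{A}_nVB+E$. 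Finally, one more application of \eqref{eq:had} gives $\|\Phi^*_{\Bv_n}\Be_n\|_\infty=\|\mathcal{D}[\tilde{A}_n^*E_nB^*]\|_\infty$, which matches the claimed inequality and completes the proof. Apart from the index bookkeeping in the key step, all remaining manipulations are routine.
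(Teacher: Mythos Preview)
Your proposal is correct and is exactly the approach the paper intends: the paper does not give a separate proof of \cref{cor:thm2.1} at all, but simply introduces it as ``restating Theorem~\ref{thm:coherence} using the notation from Section~\ref{sec:invscatteringIHT},'' relying on precisely the dictionary you spell out (the Hadamard--diagonal identity \eqref{eq:had}, the Gram-matrix factorization \eqref{eq:phiphinl}, and the identification of $H_s^{\mathcal{D}}$ with $H_s$). Your write-up is in fact more explicit than the paper itself; the only minor remark is that the hypothesis $\mu_0<\tfrac{1}{3s+1}$ you invoke is stated in \cref{thm:coherence} but is not actually used in deriving the one-step inequality \eqref{eq:iteration_thm_2.1} (see the proof in Appendix~\ref{apx:proof}), which is why \cref{cor:thm2.1} omits it.
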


To proceed, we must obtain bounds on the coherence suitable for use in the nonlinear IHT algorithm.  We investigate two specific cases: double scattering ($M=2$) and multiple scattering ($M=\infty$). 

\subsection{Mutual coherence of sensing matrices}
\label{sec:mut_coh}

For the linearized problem, it follows from Eq.~\eqref{eq:phiphi} that 
normalizing the columns of $\Phi$ is equivalent to normalizing the columns of both $A$ and $B^*$.  We note that normalizing the column $\Phi_j$ is performed by dividing all of its entries by the square root of 
\begin{equation}
\|\Phi_j\|_2^2=\sum_{i=1}^{N_sN_d} |\Phi_{ij}|^2 = \sum_{m=1}^{N_d}\sum_{n=1}^{N_s} |A_{mj}|^2 |B_{jn}|^2=\|A_j\|_2^2\|B^*_j\|_2^2 \ .
\end{equation}
Since we have normalized the columns of $A$ and $B^*$ (as well as $\Phi$) and because the coherence of a column-normalized matrix is the largest off-diagonal entry in its Gram matrix, we conclude that
\begin{equation}
\label{eq:mu_lin}
\mu(\Phi) \le \mu(A)\mu(B^*) \ ,
\end{equation}
where we have used the fact that $\Phi ^*\Phi =(A^*A)\circ(BB^*)^T$.
To extend this result to the nonlinear case, we use that fact that Eq.~\eqref{eq:phiphinl} implies
\begin{equation}
\mu(\Phi_\Bv )\le\min\{\mu(A)\mu(\tilde{B}^*),\mu(\tilde{A})\mu(B^*)\} \ ,
\end{equation}  
where $\tilde{A}$ and $\tilde{B}$ are defined as in Eqs.~\eqref{eq:nlihtisp} and \eqref{eq:nlihtisp2}.  
Since the mutual coherence is bounded above by 1, we have the immediate (pessimistic) bound that for any number of nonlinear terms $M$, 
\begin{equation}
\label{eq:bound}
\mu(\Phi_\Bv )\le\min\{\mu(A),\mu(B^*)\} \ .
\end{equation}
The above bound is most likely not optimal.  However, it can be used to easily compute (or accurately estimate) the coherence of higher order linearizations based solely on the geometry of the experiment. Moreover, Eq.~\eqref{eq:bound} is applicable to the nonlinear problem.

For the remainder of the paper, we assume that $A$ and $B^*$ have been column-normalized. We require the following result on the coherence of products of matrices. 
\begin{proposition}
\label{prop:ABcoh}
Let $A$ be an $(L\times N)$ matrix with normalized columns and $H$ be an $(N \times P)$ matrix.  Then the mutual coherence of the product $AH$ obeys the following estimate:
 \begin{equation}
 \mu(AH)\le \frac{\mu(H) + N\mu(A)}{ \left|1 - N\mu(A) \right|}
 \end{equation}
\end{proposition}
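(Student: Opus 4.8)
The plan is to reduce the coherence of $AH$ to a computation involving the Gram matrix $G = A^*A$ of $A$, and then to control the off-diagonal contribution of $G$ by a Cauchy--Schwarz argument. Since the columns of $A$ are normalized, $G$ has unit diagonal and off-diagonal entries bounded by $\mu(A)$ in absolute value, so I would write $G = I + R$, where $R$ is the Hermitian off-diagonal part with $R_{pp}=0$ and $|R_{pq}|\le \mu(A)$. The columns of $AH$ are $AH_1,\dots,AH_P$, and the key identity is $\langle AH_j, AH_k\rangle = H_j^* G H_k = \langle H_j, H_k\rangle + H_j^* R H_k$, which cleanly separates the contribution of the coherence of $H$ from the distortion introduced by $A$.

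First I would bound the numerator. For $j\neq k$, the definition of mutual coherence in \eqref{eq:mu_def} gives $|\langle H_j, H_k\rangle|\le \mu(H)\,\|H_j\|_2\|H_k\|_2$. For the cross term, expanding $H_j^* R H_k$ over the off-diagonal indices and using $|R_{pq}|\le\mu(A)$ yields $|H_j^* R H_k|\le \mu(A)\,\|H_j\|_1\|H_k\|_1$; the elementary inequality $\|\Bv\|_1\le \sqrt{N}\,\|\Bv\|_2$ then converts this to $N\mu(A)\,\|H_j\|_2\|H_k\|_2$. Combining, the numerator is at most $(\mu(H)+N\mu(A))\,\|H_j\|_2\|H_k\|_2$.

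Next I would bound the denominator from below by controlling $\|AH_j\|_2$. Writing $\|AH_j\|_2^2 = \|H_j\|_2^2 + H_j^* R H_j$ and applying the same estimate to the quadratic term gives $\|AH_j\|_2^2 \ge \|H_j\|_2^2\,(1 - N\mu(A))$ (equivalently, one may bound the spectral norm of $R$ via Gershgorin). Taking square roots and multiplying the bounds for indices $j$ and $k$, the product $\|AH_j\|_2\|AH_k\|_2$ is at least $\|H_j\|_2\|H_k\|_2\,(1 - N\mu(A))$. Forming the ratio, the factors $\|H_j\|_2\|H_k\|_2$ cancel, and taking the maximum over $j\neq k$ yields the claimed estimate $\mu(AH)\le (\mu(H)+N\mu(A))/(1-N\mu(A))$, with the absolute value in the denominator accommodating the sign.

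The main obstacle is the denominator bound: the lower estimate on $\|AH_j\|_2^2$, and hence the whole argument, is only meaningful when $1 - N\mu(A)>0$, i.e.\ in the regime $N\mu(A)<1$ where $A$ applied to the relevant directions stays bounded away from degeneracy; outside this regime the stated bound is vacuous. One should also note the slight looseness in replacing $N-1$ by $N$ (Cauchy--Schwarz discards the diagonal terms), and take care that the columns of $H$ are nonzero so that the coherence ratio is well defined.
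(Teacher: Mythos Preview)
Your proposal is correct and follows essentially the same approach as the paper: both write $A^*A = I + R$ (the paper does this implicitly by splitting the sum over $k=m$ and $k\neq m$), bound the off-diagonal contribution by $\mu(A)\|H_j\|_1\|H_k\|_1$, apply $\|\cdot\|_1\le\sqrt{N}\|\cdot\|_2$ to convert to $\ell_2$ norms, and use the reverse triangle inequality for the denominator. Your presentation via the decomposition $G=I+R$ is slightly cleaner notationally, and your remark that the bound is only informative when $N\mu(A)<1$ is a valid caveat that the paper leaves implicit.
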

\begin{proof}
We first compute an upper bound on $|(H^*A^*AH)_{ij}|$.
\begin{align}
|(H^*A^*AH)_{ij}|= & \left|\sum_{k,\ell,m} H^*_{ik}A^*_{k\ell}A_{\ell m}H_{mj}\right| =  \left|\sum_{k,m} H^*_{ik} H_{mj} \sum_{\ell} A^*_{k\ell}A_{\ell m}\right|\\
\le &  \left|\sum_{k} H^*_{ik} H_{kj} \sum_{\ell} A^*_{k\ell}A_{\ell k}\right|  + \left|\sum_{k,  m\neq k} H^*_{ik} H_{mj} \sum_{\ell} A^*_{k\ell}A_{\ell m} \right|\label{sub-eq-1:1} \\
\le & |\langle H_j, H_i\rangle| + \mu(A) \sum_{k}\left| H^*_{ik}\right| \sum_{m\neq k} \left|H_{mj}\right| \label{sub-eq-1:2} \\ 
\le &  |\langle H_j, H_i\rangle| + \mu(A)\|H\|_1^2 \ , 
\end{align}
where \eqref{sub-eq-1:2} is obtained from \eqref{sub-eq-1:1} by replacing the inner products between columns of $H$ and $A$ respectively with the upper bound on the coherence.   
We now compute a lower bound on $|(H^*A^*AH)_{ii}|$.
\begin{align}
|(H^*A^*AH)_{ii}|= & \left|\sum_{k,\ell,m} H^*_{ik}A^*_{k\ell}A_{\ell m}H_{mi}\right| \\
= &  \left|\sum_{k} H^*_{ik} H_{ki} \sum_{\ell} A^*_{k\ell}A_{\ell k} + \sum_{k\neq m} H^*_{ik} H_{mi} \sum_{\ell} A^*_{k\ell}A_{\ell m} \right|\\
\ge & \left|\langle H_i, H_i\rangle - \mu(A) \sum_{k\neq m} H^*_{ik} H_{mi}\right| \label{sub-eq-1:3}\\
\ge &  \left|\langle H_i, H_i\rangle - \mu(A)\|H\|_1^2 \right| \ .
\end{align}
Now, by definition of coherence,
\begin{align*}
\mu(AH)= & \max_{i\neq j}\frac{|(H^*A^*AH)_{ij}|}{\sqrt{(H^*A^*AH)_{ii}(H^*A^*AH)_{jj}}} \\
\le & \frac{|\langle H_j, H_i\rangle| + \mu(A)\|H\|_1^2}{\sqrt{\left|\langle H_i, H_i\rangle - \mu(A) \|H\|_1^2\right|\left|\langle H_j, H_j\rangle - \mu(A) \|H\|_1^2\right|}} \\
\le & \frac{\mu(H) + \mu(A)\frac{\|H\|_1^2}{\sqrt{\langle H_i,H_i \rangle\langle H_j,H_j \rangle}}}{\sqrt{\left|	1 - \mu(A) \frac{\|H\|_1^2}{\langle H_i,H_i \rangle}\right|\left|1 - \mu(A) \frac{\|H\|_1^2}{\langle H_j,H_j \rangle}\right|}} \\
\le & \frac{\mu(H) + N\mu(A)}{|1-N\mu(A)|} \ , 
\end{align*}
where to obtain the final result we have used inequality
\begin{equation*}
\|\Bx\|_1\le \sqrt{N}\|\Bx\|_2 \ .
\end{equation*}
\end{proof}

The following corollary will prove to be of use.
\begin{corollary}
\label{cor:muAH}
Let $A$ be an $(L\times N)$ matrix  with normalized columns, and $H$ be an $(N \times P)$ matrix.  Furthermore, assume that $H$ has only $s$ nonzero entries in any column.   Then the mutual coherence of the product $AH$ obeys the following bound:
 \begin{equation}
 \label{eq:muAH}
 \mu(AH)\le \frac{\mu(H) + s\mu(A)}{ \left|1 - s\mu(A) \right|}
 \end{equation}
\end{corollary}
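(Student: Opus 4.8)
The plan is to retrace the proof of Proposition \ref{prop:ABcoh} essentially verbatim, isolate the single point at which the ambient dimension $N$ enters, and replace the crude Cauchy--Schwarz estimate used there by a sharper one that exploits the column sparsity of $H$.

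First I would reproduce, unchanged, the two chains of inequalities from the proof of Proposition \ref{prop:ABcoh}: the upper bound on the off-diagonal entries,
\[
|(H^*A^*AH)_{ij}| \le |\langle H_j, H_i\rangle| + \mu(A)\,\|H_i\|_1\|H_j\|_1,
\]
and the lower bound on the diagonal entries,
\[
|(H^*A^*AH)_{ii}| \ge \big|\,\langle H_i, H_i\rangle - \mu(A)\,\|H_i\|_1^2\,\big|.
\]
These go through exactly as before, since the coherence bound $\mu(A)$ is applied only to inner products of columns of $A$, irrespective of the structure of $H$; nothing in these steps uses the number of rows of $H$.

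The key observation is that $N$ enters Proposition \ref{prop:ABcoh} solely through the inequality $\|\mathbf{x}\|_1 \le \sqrt{N}\|\mathbf{x}\|_2$ applied to the columns of $H$, each of which is a vector of length $N$. Under the present hypothesis every column $H_i$ has at most $s$ nonzero entries, so restricting Cauchy--Schwarz to the support of the column yields the sharper estimate $\|H_i\|_1 \le \sqrt{s}\,\|H_i\|_2$. Substituting this gives $\mu(A)\|H_i\|_1\|H_j\|_1 \le s\,\mu(A)\,\|H_i\|_2\|H_j\|_2$ and $\mu(A)\|H_i\|_1^2 \le s\,\mu(A)\|H_i\|_2^2$. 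Feeding these into the definition of $\mu(AH)$ and cancelling the common factor $\|H_i\|_2\|H_j\|_2$ as in the proposition produces \eqref{eq:muAH}, with $s$ in place of $N$ throughout.

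I do not expect any genuine obstacle, as the corollary is a direct specialization of Proposition \ref{prop:ABcoh}. The only point requiring care is to confirm that the sparsity hypothesis is stated \emph{per column}, namely ``only $s$ nonzero entries in any column,'' which is precisely what is needed: the factor $N$ was generated by a column-wise Cauchy--Schwarz estimate, so column sparsity reduces it to $s$. Had sparsity instead been assumed globally or per row, the argument would not pass through unchanged.
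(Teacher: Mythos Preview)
Your proposal is correct and is exactly the argument the paper intends: the corollary is stated without proof immediately after Proposition~\ref{prop:ABcoh}, and the implicit justification is precisely that the factor $N$ in the last step of that proof came from $\|\Bx\|_1\le\sqrt{N}\|\Bx\|_2$ applied columnwise, so column $s$-sparsity replaces it by $s$. Your slight sharpening---tracking $\|H_i\|_1\|H_j\|_1$ rather than the cruder $\|H\|_1^2$---only makes the cancellation cleaner and does not change the approach.
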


The simplest case of interest arises when $H=I+VG$ and $V$ is $s$-sparse along its diagonal. Suppose that the first $s$ entries of $V$ are nonzero. Then
\begin{equation}
I+VG = \begin{pmatrix}
1 & v_1 G_{12} & v_1G_{13} & \cdots & \cdots &\cdots & v_1G_{1N} \\
v_2 G_{21} & 1 & v_2G_{23} & \cdots & \cdots&\cdots & v_2G_{2N} \\
\vdots & & \ddots & & & &  \vdots \\
v_s G_{s1} & \cdots & v_sG_{s,s-1} & 1 & \cdots & \cdots & v_sG_{sN} \\
0 & 0 & \cdots& 0  & 1 & 0 & 0  \\
\vdots & &  & &  & \ddots \\
0 & \cdots & \cdots & \cdots & \cdots &0 & 1 
\end{pmatrix} \ .
\end{equation}
It is clear that the maximum value required for the coherence calculation is given by choosing two columns $H_i, H_j$ where $1\le i,j\le s$.  If we let $\delta=\|VG\|_{\max}$, an upper bound on the coherence can be explicitly computed to be
\begin{equation}
\mu(I+VG) \le \frac{2\delta+(s-2)\delta^2}{1+(s-1)\delta} \ . 
\end{equation}
Combining this bound with Corollary \ref{cor:muAH} yields the following result.
\begin{corollary}
\label{cor:muAH2}
Let $A$ be an $(L\times N)$ matrix  with normalized columns.  Let $V$ be a diagonal $(N\times N)$ matrix, with only $s$ nonzero entries along its diagonal.  Let $G$ be an $(N\times N)$ matrix with zeros along its diagonal, and let  $\delta=\|VG\|_{\max}<1$. Then the mutual coherence of the product $A(I+VG)$ obeys the following bound:
 \begin{equation}
 \mu(A(I+VG))\le \frac{ \frac{2\delta+(s-2)\delta^2}{1+(s-1)\delta}+ (s+1)\mu(A)}{ \left|1 - (s+1)\mu(A) \right|} \ .
 \end{equation}
\end{corollary}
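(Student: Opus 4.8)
The plan is to obtain the bound by composing two facts already established just above: Corollary \ref{cor:muAH}, applied with $H=I+VG$, and the explicit coherence estimate $\mu(I+VG)\le \frac{2\delta+(s-2)\delta^2}{1+(s-1)\delta}$ derived from the displayed form of $I+VG$. The only new input I need beyond these two ingredients is the maximum number of nonzero entries in a column of $H=I+VG$, since that sparsity level is precisely the quantity that multiplies $\mu(A)$ in Corollary \ref{cor:muAH}.

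First I would count the column sparsity of $H=I+VG$. Because $V$ is diagonal with exactly $s$ nonzero diagonal entries, the product $VG$ has at most $s$ nonzero rows (those indexed by the support of $V$), so each of its columns carries at most $s$ nonzero entries. Since $G$ has a zero diagonal, adding the identity alters $H$ only on its diagonal, where every entry equals $1$. For a column index $j$ lying outside the support of $V$, the diagonal entry $H_{jj}=1$ sits in a row where $VG$ vanishes and therefore contributes an \emph{additional} nonzero entry, so such a column has up to $s+1$ nonzero entries; for $j$ in the support of $V$ the diagonal entry coincides with one of the $s$ already-counted rows, giving only $s$. Hence the worst-case column sparsity of $H$ is $s+1$.

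With this count established I would invoke Corollary \ref{cor:muAH}, taking its sparsity parameter to be $s+1$ rather than $s$, which yields
\begin{equation*}
\mu(A(I+VG)) \le \frac{\mu(I+VG) + (s+1)\mu(A)}{\left|1 - (s+1)\mu(A)\right|} \ .
\end{equation*}
Finally I would substitute the explicit estimate for $\mu(I+VG)$. The right-hand side above is an increasing function of $\mu(I+VG)$, since its numerator grows linearly in $\mu(I+VG)$ while its denominator is a nonzero constant independent of $\mu(I+VG)$; therefore replacing $\mu(I+VG)$ by its upper bound $\frac{2\delta+(s-2)\delta^2}{1+(s-1)\delta}$ preserves the inequality and produces exactly the claimed estimate.

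The one place requiring genuine care is the column-sparsity count, and specifically the recognition that the identity adds a nonzero entry beyond the $s$ rows occupied by $VG$, so that the effective sparsity entering Corollary \ref{cor:muAH} is $s+1$ and not $s$. Overlooking this ``$+1$'' would yield the wrong constant. Everything else is a direct composition of the two previously proved facts together with the elementary monotonicity observation.
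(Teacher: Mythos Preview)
Your proposal is correct and follows exactly the paper's route: the paper states the corollary as a direct combination of the explicit bound on $\mu(I+VG)$ with Corollary~\ref{cor:muAH}, and you have supplied the one detail the paper leaves implicit, namely the column-sparsity count showing that $H=I+VG$ has at most $s+1$ nonzero entries per column (hence the $(s+1)$ in place of $s$). Nothing more is needed.
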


We note that analogous results can be derived when $M>1$, but will not be used in this paper.

\subsection{Application to second Born approximation ($M=2$)}
\label{sec:analysis}
We now apply Theorem~\ref{thm:coherence} to the case $M=2$, which corresponds to double scattering, also known as the second Born approximation. Here we directly apply Corollary \ref{cor:muAH2}.  We then obtain the following estimate on the coherence: 
\begin{equation}
\mu(\Phi_{\Bv_n})\le\mu(B^*)\mu(A(I+V_nG))\le\mu(B^*)\frac{ \frac{2\delta_n+(s-2)\delta_n^2}{1+(s-1)\delta_n}+ (s+1)\mu(A)}{ \left|1 - (s+1)\mu(A) \right|} \ ,
\end{equation}  
where we require the third term to be less than $1/(3s+1)$ for all iterations $n$ to guarantee convergence.  

The above analysis ignores the behavior of the error term $\|\Phi^*_{\Bv_n}\Be_n\|_\infty$ in Theorem \ref{thm:coherence}, where $\Be_n=\Phi(\Bv)-\Phi_{\Bv_n}\Bv+\Beps$.  Next, we investigate the behavior of error for $\Phi^*_{\Bv_n}$ as defined by~\cref{sec:scatt}.
To proceed, we note that the error term can be split into three parts as 
\begin{align}
\label{eq:3errs}
\|\Phi^*_{\Bv_n}(\Phi(\Bv)-\Phi_{\Bv_n}\Bv+\epsilon)\|_\infty = & \|\Phi^*_{\Bv_n}(\Phi(\Bv)-\Phi_{\Bv}\Bv+\Phi_{\Bv}\Bv-\Phi_{\Bv_n}\Bv+\epsilon)\|_\infty \\ \nonumber
\le & \|\Phi^*_{\Bv_n}(\Phi(\Bv)-\Phi_{\Bv}\Bv)\|_\infty+ \|\Phi^*_{\Bv_n}(\Phi_{\Bv}\Bv-\Phi_{\Bv_n}\Bv)\|_\infty \\ \nonumber
 & + \|\Phi^*_{\Bv_n}\Beps\|_\infty \ .
\end{align}
The first term measures the error of the linearization of the forward problem, the second term measures the difference between the linearizations at the true solution and the current iteration, and the last term is the error in the measurement system.  Note that this last term can account for the error if the underlying model is not $s$-sparse. 
Let us examine the first term and make use of the definitions of $\Phi$ and $\Phi_{\Bv}$.
We thus obtain
\begin{align*}
\|\Phi^*_{\Bv_n}(\Phi(\Bv)-\Phi_{\Bv}\Bv)\|_\infty = & \|\mathcal{D}\big[(I+V_n\Gamma)^*A^*A[(I- V\Gamma)^{-1}-(I+V\Gamma)]VBB^*]\|_\infty \\
= & \|\mathcal{D}\big[(I+ V_n\Gamma)^*A^*A[(V\Gamma )^2+\cdots]VBB^*\big]\|_\infty \\
\le & \|\mathcal{D}\big[(I+ V_n\Gamma)^*A^*A[(V\Gamma )^2]VBB^*\big]\|_\infty +\cdots \\
=& \|((I+ V_n\Gamma)^*A^*A(V\Gamma)^2)\circ \big(BB^*\big)^T \ \Bv \| _\infty +\cdots
\end{align*}
Each of the terms in the series is of the form $\|(Y^{(j)}\circ Z^T) \Bv\|_\infty$ for $j\ge2$, where 
\begin{equation}
Y^{(j)}=(I+ V_n\Gamma)^*A^*A(V\Gamma)^j \ , \qquad Z=BB^* \ .
\end{equation}
Letting $y^{(j)}=(Y^{(j)}\circ Z^T)\Bv$, we obtain the bound 
\begin{align}
|y^{(j)}_{k}|= & \left|\sum_{\ell=1}^N (Y^{(j)})_{k\ell}(Z)_{\ell k}(\Bv)_\ell\right| \\
\le & \|Y^{(j)}\|_{\max}\|Z\|_{1}\|x\|_{\infty} \ .
\end{align}
The largest entry of $Y^{(j)}$ can be further bounded by 
\begin{align}
Y^{(j)}_{ij} \le & \|A^*AV\Gamma\|_{\max}\|(I+V_n\Gamma)\|_1\|V\Gamma\|_1^{j-1} \\
\le & (1+\gamma_n)(1+(s-1)\mu(A))\gamma^{j-1}\delta \ ,
\end{align}
where $\delta=\|V\Gamma\|_{\rm max}$ and $\gamma=\|V\Gamma\|_1$, with analogous definitions for $\delta_n$ and $\gamma_n$.  
Because $B$ has ones along the diagonal and all off diagonal terms smaller than $\mu(B)$, we can bound the error term by 
\begin{align}
\|(Y^{(j)}\circ Z^T) \Bv\|_\infty\le (1+\gamma_n)(1+(s-1)\mu(A))(1+(s-1)\mu(B))\gamma^{j-1}\delta\|\Bv\|_\infty. 
\end{align}
To bound the total error, we sum the above geometric series, assuming $s\delta<1$, with the result
\begin{align}
\|\Phi^*_{\Bv_n}(\Phi(\Bv)-\Phi_{\Bv}\Bv)\|_\infty \le &(1+\gamma_n)(1+(s-1)\mu(A))(1+(k-1)\mu(B))\delta\|\Bv\|_\infty\sum_{j=2}^{\infty} \gamma^{j-1} \\
= & (1+\gamma_n)(1+(s-1)\mu(A))(1+(s-1)\mu(B)) \frac{\delta\gamma}{1-\gamma}\|\Bv\|_\infty \ .
\end{align}

\noindent Now turning to the second term in the error 
and using \eqref{eq:had}, we find that
\begin{align}
 &\|\mathcal{D}\big[A^*AV[(I+\Gamma V)-(I+\Gamma V_n)]\big]BB^*(I+\Gamma V_n)^*\|_\infty \\
 & \hspace{3cm} =  \|\mathcal{D}\big[A^*AV\Gamma (V-V_n)BB^*(I+\Gamma V_n)^*\big]\|_\infty \\
& \hspace{3cm} = \|(A^*AV\Gamma)\circ \big(BB^*(I+\Gamma V_n)^*\big)^T \ (\Bv-\Bv_n) \|_\infty
\end{align}
Once again we use $Y=A^*AV\Gamma$ and $Z=BB^*(I+\Gamma V_n)^*$, and thus obtain\begin{equation}
\|(Y\circ Z^T) (\Bv-\Bv_n)\|_\infty\le\|(Y\circ Z^T)\|_{\max}\|(\Bv-\Bv_n)\|_1  \ .
\end{equation}
We compute the max norm of $Y\circ Z^T$ by comparing the maximum diagonal and maximum off-diagonal entries.  On the diagonal, $Y_{jj}\le s \delta \mu(A)$, since the diagonal of $V\Gamma$ vanishes.  Likewise, $Z_{jj}\le1+s\delta_n \mu(B)$.  The off-diagonal calculations are identical to those needed for bounding the first term of the error.  Therefore, for $j\neq k$, we have
\begin{align}
&Y_{jk}\le \delta(1+(s-1)\mu(A)) \\
&Z_{jk}\le \mu(B)+\delta_n(1+(s-1)\mu(B)) \ .
\end{align}
Since $\mu(A),\mu(B),\delta,\delta_n\le 1$, we have 
\begin{equation}
\|\Phi^*_{\Bv_n}(\Phi_{\Bv}\Bv-\Phi_{\Bv_n}\Bv)\|_\infty \le s\delta\mu(A)(1+s\delta_n\mu(B))\|\Bv-\Bv_n\|_{1} \ .
\end{equation}

We can now restate Theorem \ref{thm:coherence} for the case of the second Born approximation.
\begin{theorem}\label{thm:nonlinearIHT2ndBorn}
Let $Y=A(I-V\Gamma)^{-1}VB+E$ and let $\{V_n\}$ be the sequence of diagonal matrices generated  by the nonlinear IHT iteration
\begin{equation}
V_{n+1}=H_s^\mathcal{D}\Big(\mathcal{D}\big(V_n+\tilde{A}_n^*\left(Y-\tilde{A}_nV_nB \right)B^*\big)\Big) \ ,
\end{equation}
where $\tilde{A}_n=A\left(I+V_n\Gamma\right)$.  Let $\gamma=\|\Gamma V\|_1<1$, $\delta=\|\Gamma V\|_{\max}$, and $\delta_n=\|\Gamma V_n\|_{\max}$.  If $V$ is $s$-sparse along its diagonal, then
 for all $n\ge 1$, 
\begin{align}
\label{eq:bound1}
\|\Bv_n-\Bv\|_1 &\le 
\rho^{(1)}_n\|\Bv_{n-1}-\Bv\|_1 \nonumber\\
   &+  \frac{\delta\gamma(1+\gamma_n)(3s+1)(1+(s-1)\mu(A))(1+(s-1)\mu(B^*))  }{(1-\rho^{(1)})(1-\gamma)}\|\Bv\|_\infty \nonumber \\
   &+ \|\Phi_{\Bv_n}\epsilon\|_\infty \ ,
\end{align}
where 
\begin{align}
\label{eq:mu1A}
&\rho^{(1)}_n=(3s+1)\left((\mu^{(1)}(A)\mu(B^*)+s\delta\mu(A)(1+s\delta_n \mu(B^*)\right) \nonumber \\
\text{ and     } \hspace{1cm} & \mu^{(1)}(A) = \min\left\{\frac{ \frac{2\delta_n+(s-2)\delta_n^2}{1+(s-1)\delta_n}+ (s+1)\mu(A)}{ \left|1 - (s+1)\mu(A) \right|},1\right\} \ .
\end{align}
\end{theorem}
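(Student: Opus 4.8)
The plan is to specialize Theorem~\ref{thm:coherence}, in the scattering notation of Theorem~\ref{cor:thm2.1}, to $M=2$, where $\tilde{A}_n=A(I+V_n\Gamma)$, and then to make every abstract quantity in that recursion explicit. The starting point is the one-step bound
\[
\|\Bv_n-\Bv\|_1 \le \rho\,\|\Bv_{n-1}-\Bv\|_1 + (3s+1)\,\|\Phi^*_{\Bv_n}\Be_n\|_\infty, \qquad \rho=\mu_0(3s+1),
\]
which is valid once a uniform coherence bound $\mu(\Phi_{\Bv_n})\le\mu_0$ is established. For $M=2$ I would obtain $\mu_0$ from the factorization in~\eqref{eq:bound}, namely $\mu(\Phi_{\Bv_n})\le\mu(B^*)\,\mu(A(I+V_n\Gamma))$, and then apply Corollary~\ref{cor:muAH2} with $\delta_n=\|\Gamma V_n\|_{\max}$; capping the result by $1$ (coherence never exceeds $1$) yields exactly $\mu_0=\mu^{(1)}(A)\mu(B^*)$ with $\mu^{(1)}(A)$ as in~\eqref{eq:mu1A}. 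This supplies the coherence summand of $\rho^{(1)}_n$.

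Next I would expand $\Be_n$ via the three-term split~\eqref{eq:3errs} and estimate the pieces separately. For the intrinsic linearization error $\Phi(\Bv)-\Phi_{\Bv}\Bv$, the key identity is $(I-V\Gamma)^{-1}-(I+V\Gamma)=\sum_{j\ge2}(V\Gamma)^j$; after using~\eqref{eq:had} each term becomes $(Y^{(j)}\circ Z^T)\Bv$ with $Y^{(j)}=(I+V_n\Gamma)^*A^*A(V\Gamma)^j$ and $Z=BB^*$. The crude entrywise estimate $|y^{(j)}_k|\le\|Y^{(j)}\|_{\max}\|Z\|_1\|\Bv\|_\infty$, the bound $\|Y^{(j)}\|_{\max}\le(1+\gamma_n)(1+(s-1)\mu(A))\gamma^{j-1}\delta$, and the factor $1+(s-1)\mu(B^*)$ arising from the unit diagonal and $\mu(B^*)$-bounded off-diagonals of $BB^*$ restricted to the $s$-sparse $\Bv$, reduce the whole series to a geometric series in $\gamma=\|V\Gamma\|_1<1$. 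Summing it produces the factor $\tfrac{\delta\gamma}{1-\gamma}\|\Bv\|_\infty$ together with the prefactor $(1+\gamma_n)(1+(s-1)\mu(A))(1+(s-1)\mu(B^*))$ seen in the numerator of the second term of~\eqref{eq:bound1}.

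For the second piece, $\Phi_{\Bv}\Bv-\Phi_{\Bv_n}\Bv$, I would again invoke~\eqref{eq:had} to write it as $(Y\circ Z^T)(\Bv-\Bv_n)$ with $Y=A^*AV\Gamma$ and $Z=BB^*(I+\Gamma V_n)^*$, so that it is controlled by $\|Y\circ Z^T\|_{\max}\,\|\Bv-\Bv_n\|_1$. Comparing the diagonal entries ($Y_{jj}\le s\delta\mu(A)$, since $\Gamma$ has a vanishing diagonal, and $Z_{jj}\le1+s\delta_n\mu(B^*)$) against the off-diagonal entries gives the clean bound $s\delta\mu(A)(1+s\delta_n\mu(B^*))\,\|\Bv-\Bv_n\|_1$. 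Because this piece is proportional to the very quantity being estimated, it is folded into the contraction coefficient, producing the second summand of $\rho^{(1)}_n=(3s+1)(\mu^{(1)}(A)\mu(B^*)+s\delta\mu(A)(1+s\delta_n\mu(B^*)))$; the measurement term $\|\Phi^*_{\Bv_n}\Beps\|_\infty$ is left untouched and becomes the final additive term. Iterating the resulting one-step recursion and summing the geometric series in $\rho^{(1)}$ (which converges exactly because $\rho^{(1)}<1$) then turns the constant linearization-error prefactor into the form carrying $1/(1-\rho^{(1)})$ in~\eqref{eq:bound1}.

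The step I expect to be the main obstacle is the bookkeeping around this second error piece: it must be absorbed into $\rho^{(1)}_n$ with the correct index, reconciling the $\Bv_n$ that enters through the linearization $\Phi_{\Bv_n}$ with the $\Bv_{n-1}$ carried by the contraction, and one must check that the resulting $\rho^{(1)}$ still satisfies $\rho^{(1)}<1$ so that both the fixed-point absorption and the geometric summation are legitimate. The remaining $\|Y^{(j)}\|_{\max}$ estimates are routine but demand careful tracking of which factors contribute $\mu(A)$ versus $\mu(B^*)$ and of the sparsity-induced powers of $s$.
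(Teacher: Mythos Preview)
Your proposal is correct and follows essentially the same route as the paper: apply Theorem~\ref{thm:coherence}/\ref{cor:thm2.1} with $M=2$, bound $\mu(\Phi_{\Bv_n})$ via $\mu(B^*)\mu(A(I+V_n\Gamma))$ and Corollary~\ref{cor:muAH2}, split $\Be_n$ into the three pieces of~\eqref{eq:3errs}, handle the first via the geometric series in $(V\Gamma)^j$ with $Y^{(j)}=(I+V_n\Gamma)^*A^*A(V\Gamma)^j$, $Z=BB^*$, and absorb the second (with $Y=A^*AV\Gamma$, $Z=BB^*(I+\Gamma V_n)^*$) into the contraction coefficient. The only discrepancy is your reading of the $1/(1-\rho^{(1)})$ factor in~\eqref{eq:bound1} as coming from iterating the recursion; as stated, \eqref{eq:bound1} is a one-step bound, and the paper's derivation of the first error piece produces only the $1/(1-\gamma)$ factor, so this denominator is not obtained by the argument actually carried out in the text (and is likely a slip in the statement rather than a missing step in your plan).
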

Note that when the coefficient $\rho^{(1)}_n$ is less than one for all $n$, the above algorithm exhibits linear convergence.  
We also note that similar analysis can be performed for linear IHT under the first Born approximation.   Here, the second error term in Eq.~\eqref{eq:3errs} disappears, and the first error term acquires an additional term in the resulting sum.  
\begin{corollary}
Thus, we obtain the error estimate for linear IHT
\begin{align}
\label{eq:bound2}
\|\Bv_n-\Bv\|_1 \le &  
	\mu(A)\mu(B^*)(3s+1)\|\Bv_{n-1}-\Bv\|_1 \\
   &+  \frac{\delta(1+\gamma_n)(3s+1)(1+(s-1)\mu(A))(1+(s-1)\mu(B^*))  }{(1-\rho^{(1)})(1-\gamma)}\|\Bv\|_\infty + \|\Phi_{\Bv}\Beps\|_\infty \nonumber .
\end{align}
\end{corollary}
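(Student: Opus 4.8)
The plan is to apply Theorem~\ref{thm:coherence} directly, exploiting the fact that in the first Born approximation the algorithm's sensing map is genuinely linear, so its linearization does not depend on the iterate: $\Phi_{\Bv_n}=\Phi$ for every $n$, where $\Phi_{(mn),j}=A_{mj}B_{jn}$. First I would invoke the coherence estimate \eqref{eq:mu_lin}, namely $\mu(\Phi)\le\mu(A)\mu(B^*)$, so that the hypothesis of Theorem~\ref{thm:coherence} holds with $\mu_0=\mu(A)\mu(B^*)$ and the contraction factor is $\rho=\mu(A)\mu(B^*)(3s+1)$. Under the standing assumption $\mu_0<1/(3s+1)$ this yields $\rho<1$ and produces the leading term $\mu(A)\mu(B^*)(3s+1)\,\|\Bv_{n-1}-\Bv\|_1$ of \eqref{eq:bound2}.

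Next I would apply the three-way splitting \eqref{eq:3errs} to the per-iteration residual $\Be_n=\Phi(\Bv)-\Phi_{\Bv_n}\Bv+\Beps$, where $\Phi(\cdot)$ denotes the full multiple-scattering forward map that actually generates the data $Y=A(I-V\Gamma)^{-1}VB+E$. Because the linearization is iterate-independent, $\Phi_\Bv=\Phi_{\Bv_n}=\Phi$, and the middle term $\|\Phi^*(\Phi_\Bv\Bv-\Phi_{\Bv_n}\Bv)\|_\infty$ vanishes identically; this is the term that ``disappears'' in the linear case.

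The remaining model-mismatch term is where the work lies. Since the linearization retains only the zeroth-order Born term $AVB$ (rather than $A(I+V\Gamma)B$ as in the second Born case),
\[
\Phi(\Bv)-\Phi\Bv = A\big[(I-V\Gamma)^{-1}-I\big]VB = A\Big(\sum_{j\ge 1}(V\Gamma)^j\Big)VB,
\]
so that
\[
\|\Phi^*(\Phi(\Bv)-\Phi\Bv)\|_\infty = \Big\|\mathcal{D}\Big[A^*A\Big(\sum_{j\ge 1}(V\Gamma)^j\Big)VBB^*\Big]\Big\|_\infty .
\]
This is precisely the geometric-series estimate already carried out in the proof of Theorem~\ref{thm:nonlinearIHT2ndBorn}, except that the sum now begins at $j=1$ instead of $j=2$. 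Reusing the term-by-term max-norm/$\ell_1$ bounds developed there and summing $\sum_{j\ge 1}\gamma^{j-1}\delta=\delta/(1-\gamma)$, which converges because $\gamma=\|V\Gamma\|_1<1$, replaces the second-Born factor $\delta\gamma/(1-\gamma)$ by $\delta/(1-\gamma)$; this is the single arithmetic change accounting for the numerator $\delta$ (rather than $\delta\gamma$) in \eqref{eq:bound2}. Iterating the one-step recursion exactly as in \eqref{eq:iterates} and summing $\sum_n\rho^n$ then converts this per-step bound into the cumulative $1/(1-\rho)$ factor, while the measurement-noise term $\|\Phi^*\Beps\|_\infty$ passes through unchanged.

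The only genuine obstacle is confirming that admitting the $j=1$ term does not spoil the summation: one must check that the per-term estimate of $\|A^*A(V\Gamma)^j VBB^*\|_{\max}$ used for $j\ge 2$ remains valid at $j=1$ and that the series still converges under $\gamma<1$. This is routine, since every term carries the same $(1+(s-1)\mu(A))(1+(s-1)\mu(B^*))$ prefactor together with one additional power of $\gamma$. I would also flag that the displayed bound \eqref{eq:bound2} inherits the factors $(1+\gamma_n)$ and $(1-\rho^{(1)})$ verbatim from Theorem~\ref{thm:nonlinearIHT2ndBorn}; since $1+\gamma_n\ge 1$ only enlarges the right-hand side, and the true per-iteration contraction $\rho=\mu(A)\mu(B^*)(3s+1)$ satisfies $\rho\le\rho^{(1)}<1$, writing $1-\rho^{(1)}$ in place of $1-\rho$ merely shrinks the denominator and hence yields a valid (if slightly loose) inequality, so no further argument is needed.
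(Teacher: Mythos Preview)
Your proposal is correct and follows essentially the same route as the paper, which merely remarks that ``the second error term in Eq.~\eqref{eq:3errs} disappears, and the first error term acquires an additional term in the resulting sum''; you have expanded exactly these two observations. Your additional care in justifying why the inherited factors $(1+\gamma_n)$ and $(1-\rho^{(1)})$ still yield a valid (if loose) inequality is a point the paper leaves implicit.
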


\subsection{Application to multiple scattering ($M=\infty$)}
We now consider the nonlinear version of the IHT algorithm.  As before, we assume without loss of generality that $\mu(B^*)\le\mu(A)$.  We note that the linear error term in Eq.~\eqref{eq:3errs} is not present.  Therefore, we only need to estimate the remaining terms, which are of the form
\begin{equation}
\label{eq:err2inf}
\|\Phi^*_{\Bv_n}(\Phi(\Bv)-\Phi_{\Bv}\Bv)\|_\infty =  \|\mathcal{D}\big[(I-\Gamma^*V_n^*)^{-1}A^*A[(I-V_n\Gamma)^{-1}-(I-V\Gamma)^{-1}]VBB^*\big]\|_\infty
\end{equation}
Next, we note that
\begin{equation}
\label{eq:res}
(I-V_n\Gamma)^{-1}-(I-V\Gamma)^{-1}=(I-V_n\Gamma)^{-1}(V_n-V)\Gamma(I-V\Gamma)^{-1} \ .
\end{equation}
Then, we use the Hadamard property \eqref{eq:had} of the diagonal matrix $(V_n-V)$ in \eqref{eq:res} to obtain
\begin{align}
\label{eq:fullnl}
&\|\mathcal{D}\big[(I-\Gamma^*V_n^*)^{-1}A^*A[(I-V_n\Gamma)^{-1}-(I-V\Gamma)^{-1}]VBB^*\big]\|_\infty \nonumber \\ & \hspace{2cm}=\| \big((I-\Gamma^*V_n^*)^{-1}A^*A (I-V_n\Gamma)^{-1}\big)\circ\big(\Gamma(I-V\Gamma)^{-1}VBB^*\big)^T (\Bv_n-\Bv)  \|_\infty
\nonumber \\ & \hspace{2cm}=\| \big((I-\Gamma^*V_n^*)^{-1}A^*A (I-V_n\Gamma)^{-1}\big)\circ\big(\Gamma V(I-\Gamma V)^{-1}BB^*\big)^T (\Bv_n-\Bv)  \|_\infty
\end{align}
We use the same analysis to bound \eqref{eq:fullnl} by $\|Y\|_{\max}\|Z\|_{\max}\|\Bv_n-\Bv\|_1$, where $Y=(I-\Gamma^*V_n^*)^{-1}A^*A (I-V_n\Gamma)^{-1}$ and $Z=\Gamma V(I-\Gamma V)^{-1}BB^*$. Since $A^*A$ and $BB^*$ are normalized, we have the bounds
\begin{align}
&\|Y\|_{\max}\le\left( \frac{1}{1-\gamma_n}\right)^2 \\ 
&\|Z\|_{\max}\le \frac{(1+(s-1)\mu(B^*))\delta}{1-\gamma} \ ,
\end{align}
which results in the final error estimate
\begin{equation}
\|\Phi^*_{\Bv_n}(\Phi(\Bv)-\Phi_{\Bv}\Bv)\|_\infty \le \left( \frac{1}{1-\gamma_n}\right)^2 \frac{(1+(s-1)\mu(B^*))\delta}{1-\gamma} \|\Bv_n-\Bv\|_1
\end{equation}
Putting this result into Theorem \ref{thm:coherence}, we obtain the following result for the full $T$-matrix IHT algorithm.  

\begin{theorem}\label{thm:fullTmatrix}
Let $Y=A(I-V\Gamma)^{-1}VB+E$ and let $\{V_n\}$ be the sequence of diagonal matrices generated  by the nonlinear IHT iteration
\begin{equation}
V_{n+1}=H_s^\mathcal{D}\Big(\mathcal{D}\big(V_n+\tilde{A}_n^*\left(Y-\tilde{A}_nV_nB \right)B^*\big)\Big) \ ,
\end{equation}
where $\tilde{A}_n=A\left(I-V_n\Gamma\right)^{-1}$.  Let $\gamma=\|\Gamma V\|_1<1$, $\gamma_n=\|\Gamma V_n\|_1$, $\delta=\|\Gamma V\|_{\max}$, and $\delta_n=\|\Gamma V_n\|_{\max}$. 
Assuming that $\mu(B^*)\le\mu(A)$, for all $n\ge 1$, then
\begin{align}
\label{eq:bound3}
	\|\Bv_n-\Bv\|_1 \le & \rho_n\|\Bv_{n-1}-\Bv\|_1+\|\Phi_{\Bv_n}\Beps\|_\infty \ ,
\end{align}
where
\begin{align*}
&\rho_n=(3s+1)\left(\mu(B^*)+\left( \frac{1}{1-\gamma_n}\right)^2 \frac{(1+(s-1)\mu(B^*))\delta}{1-\gamma} \right)\ .
\end{align*}
\end{theorem}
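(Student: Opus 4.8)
The plan is to obtain Theorem~\ref{thm:fullTmatrix} as the specialization of the general nonlinear-IHT estimate, Theorem~\ref{thm:coherence}, to the exact ($M=\infty$) linearization $\tilde{A}_n=A(I-V_n\Gamma)^{-1}$. First I would invoke Theorem~\ref{thm:coherence} to get the recursion $\|\Bv_n-\Bv\|_1\le\rho\|\Bv_{n-1}-\Bv\|_1+(3s+1)\|\Phi^*_{\Bv_n}\Be_n\|_\infty$ with $\Be_n=\Phi(\Bv)-\Phi_{\Bv_n}\Bv+\Beps$ and $\rho=\mu_0(3s+1)$. Under the standing hypothesis $\mu(B^*)\le\mu(A)$, the coherence estimate \eqref{eq:bound} licenses the choice $\mu_0=\mu(B^*)$, which already produces the first, purely-coherence summand $(3s+1)\mu(B^*)$ of the coefficient $\rho_n$. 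Everything then reduces to controlling $\|\Phi^*_{\Bv_n}\Be_n\|_\infty$.

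Next I would split $\Be_n$ exactly as in \eqref{eq:3errs} into the linearization error $\Phi(\Bv)-\Phi_{\Bv}\Bv$, the linearization mismatch $\Phi_{\Bv}\Bv-\Phi_{\Bv_n}\Bv$, and the measurement noise $\Beps$. The structural simplification particular to the $M=\infty$ regime is that the frozen-$T$-matrix linearization evaluated at the true potential reproduces the exact forward map, since $\Phi_{\Bv}\Bv$ corresponds to $A(I-V\Gamma)^{-1}VB=\Phi(\Bv)$; hence the first term vanishes identically. The third term is carried through unchanged and becomes the additive noise contribution $\|\Phi_{\Bv_n}\Beps\|_\infty$ of the statement. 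Thus the entire remaining effort goes into the middle, mismatch term.

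For the mismatch term I would apply the resolvent identity \eqref{eq:res} to write $(I-V_n\Gamma)^{-1}-(I-V\Gamma)^{-1}=(I-V_n\Gamma)^{-1}(V_n-V)\Gamma(I-V\Gamma)^{-1}$, which exposes the diagonal difference $V_n-V$, and then use the Hadamard-diagonal identity \eqref{eq:had} to recast the quantity in the factored form $\|(Y\circ Z^T)(\Bv_n-\Bv)\|_\infty\le\|Y\|_{\max}\|Z\|_{\max}\|\Bv_n-\Bv\|_1$, with $Y=(I-\Gamma^*V_n^*)^{-1}A^*A(I-V_n\Gamma)^{-1}$ and $Z=\Gamma V(I-\Gamma V)^{-1}BB^*$, exactly as in \eqref{eq:fullnl}. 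Neumann-series estimates on the two resolvents then give $\|Y\|_{\max}\le(1-\gamma_n)^{-2}$, using $(I-V_n\Gamma)^{-1}=\sum_m(V_n\Gamma)^m$ together with the column-normalization of $A$, and $\|Z\|_{\max}\le(1+(s-1)\mu(B^*))\delta/(1-\gamma)$, using the $s$-sparsity of $V$ and the coherence of $B^*$. Since the resulting bound is again proportional to the $\ell^1$-distance to the true potential, I would fold it into the contraction coefficient, where it supplies the second summand of $\rho_n$; collecting the two contributions reproduces the stated $\rho_n$ and completes the argument.

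The main obstacle is this pair of resolvent estimates. Convergence of the two Neumann series demands $\gamma=\|\Gamma V\|_1<1$ and $\gamma_n=\|\Gamma V_n\|_1<1$ at every iterate, and the delicate point is the sparsity-sensitive factor $(1+(s-1)\mu(B^*))$ in $\|Z\|_{\max}$: it appears because the $s$-sparsity of $V$ confines each entry of $\Gamma V(I-\Gamma V)^{-1}BB^*$ to one diagonal contribution of size at most one and $s-1$ off-diagonal contributions each bounded by $\mu(B^*)$. Tracking this factor faithfully, rather than replacing $s$ by the ambient dimension $N$, is what keeps $\rho_n$ small enough to be useful in the sparse regime, and it is the step most prone to slip.
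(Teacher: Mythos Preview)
Your proposal is correct and follows essentially the same route as the paper: invoke Theorem~\ref{thm:coherence} with the pessimistic coherence bound $\mu_0=\mu(B^*)$ from \eqref{eq:bound}, observe that for $M=\infty$ the first term of the error splitting \eqref{eq:3errs} vanishes because $\Phi_{\Bv}\Bv=\Phi(\Bv)$, and then bound the mismatch term via the resolvent identity \eqref{eq:res} and the Hadamard factorization \eqref{eq:fullnl} into $Y$ and $Z$ with the stated max-norm estimates. The decomposition, the key identities, and the resulting bounds all match the paper's argument.
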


\section{Numerical simulations}
\label{sec:coherence}

In this section we investigate the linear and nonlinear IHT algorithms in numerical simulations. We also analyze the convergence of the algorithms as characterized by Theorems \ref{thm:nonlinearIHT2ndBorn} and \ref{thm:fullTmatrix}. The analysis is carried out in three parts.  First, we compare the coherence estimates from section \ref{sec:mut_coh} with the true coherence for nonlinear IHT.  Next, we compare the convergence guarantees in Theorems \ref{thm:nonlinearIHT2ndBorn} and \ref{thm:fullTmatrix} with numerical simulations.  Finally, we report simulations that explore the use of the nonlinear IHT algorithm in settings beyond which the convergence estimates in Theorems \ref{thm:nonlinearIHT2ndBorn} and \ref{thm:fullTmatrix} hold.

\subsection{Far Field Coherence Estimates}
\label{subsec:coherence}

The coherence estimates derived in Section \ref{sec:mut_coh} are quite general and can be specialized for specific experimental geometries. We recall the following coherence in bounds under the assumption that $\mu(B^*)\le\mu(A)$:
\begin{itemize}
\setlength\itemsep{0.5em}
\item Linear: $\mu(\Phi) \le \mu(A)\mu(B^*)$
\item Second Born: $\mu(\Phi_\Bv) \le \mu(A(I+V\Gamma))\mu(B)^* \le \mu^{(1)}(A)\mu(B^*) $
\item Fully nonlinear: $\mu(\Phi_\Bv) \le \mu(A(I-V\Gamma)^{-1})\mu(B)^* \le \mu(B^*) $
\end{itemize}
where $\mu^{(1)}(A)$ is given by \eqref{eq:mu1A}.
In this section, we consider the case of far-field scattering as described in
Section~\ref{sec:scatt}. We begin by computing the coherence of the matrix $A$, as defined by~\eqref{eq:disc_ex}.  Using~\eqref{eq:mu_def}, we find that
\begin{equation}
\label{eq:coh_ex1}
\mu(A) =  \frac{1}{N_d}
\max_{j\neq \ell}\left|\sum_{m=1}^{N_d} e^{ik \Hx_m \cdot (\Br_j-\Br_\ell)} \right|\end{equation}    
We assume that the measurement directions $\{\Hx_m\}$ are uniformly distributed on the unit sphere.  For $N_s$ sufficiently large, we can approximate the above sum by an integral:\begin{equation}
\label{eq:coh_int1}
\mu(A)\approx\max_{j\neq\ell} \left|\frac{1}{4\pi}\int_{S^2} e^{ik \Hx \cdot(\Br_j-\Br_\ell)}d\Hx\right| = \max_{j\neq\ell}\left|\frac{\sin(k|\Br_j-\Br_\ell|)}{k|\Br_j-\Br_\ell|}\right| = \left|\frac{\sin(kh)}{kh}\right| \ ,
\end{equation}
where $h$ is the spacing between the voxels. This result is compared to the coherence computed for a finite number of directions in 
Fig.~\ref{fig:coh1}.   As seen from Eq.~\eqref{eq:coh_ex1}, the coherence is a decreasing function in the number of directions of illumination. If the incident directions $\Hx$ are chosen in the same manner as the measurement directions $\Hx$, then $\mu(B^*)=\mu(A)$.   

\begin{figure}[t]
\centering
\includegraphics[width=0.5\linewidth]{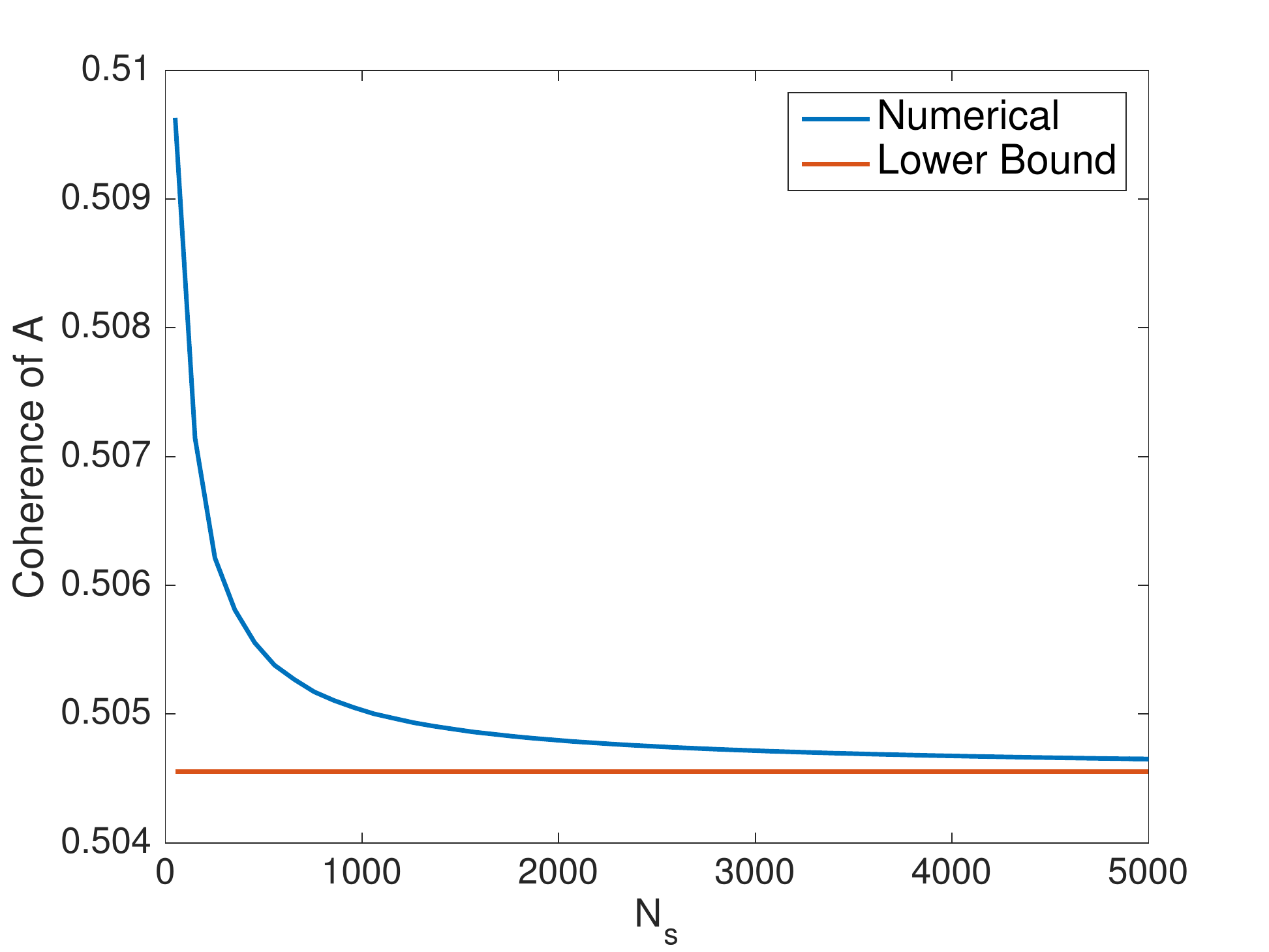}
\caption{Numerical computation of the coherence of the sensing matrix $A$ for varying number of incident directions $N_d$ with $kh=1.885$.  The region $\Omega$ was the unit cube $[0,1]^3$, discretized with $N=1000$ into a $10\times10\times10$ mesh. The red line corresponds to $\sin(kh)/(kh)\approx0.5045$.     
}
\label{fig:coh1}
\end{figure}

We now turn to the effects of scattering beyond the Born approximation on the coherence. We proceed by studying the experimental scenario of Fig.~\ref{fig:coh1}. In this setting, within the second Born approximation, \eqref{eq:mu1A} yields the bound $\mu(A(I+V\Gamma))\le1$ for all $s$ and $\delta_n$. This bound is not useful.  Likewise, in the previous analysis of the fully nonlinear algorithm, we use the same trivial bound $\mu(A(I-V\Gamma)^{-1})\le1$.  We look to improve these upper bounds for simple cases.  

\subsubsection{The case $s=1$} 
We consider the simplest case of a single scatterer ($s=1$) at the position $\Br_*$ with potential $\eta_0$.  Here $V\Gamma$ has only one nonzero row and thus $(I-V\Gamma)^{-1}=(I+V\Gamma)$. By~\eqref{eq:disc_ex} the entries of this matrix are given by
\begin{equation}
\label{eq:A(I+VG)}
\left[A(I+V\Gamma)\right]_{j\ell}=e^{ik\Hx_j\cdot\Br_\ell}+\frac{k^2h^3\eta_0}{|\Br_\ell-\Br_*|}e^{ik(|\Br_\ell-\Br_*|+\Hx_j\cdot\Br_*)} \ .
\end{equation}
The coherence  is now approximated for large values of $N_s$ by 
\begin{equation}
\label{eq:muAIVG_ex}
\mu(A(I+V\Gamma))\approx \max_{j\neq\ell}\frac{\int_{S^2}f(\Br_j,\Hx)\overline{f(\Br_\ell,\Hx)}d\Hx}{\sqrt{\int_{S^2}|  f(\Br_j,\Hx)d\Hx |^2\int_{S^2}|  f(\Br_\ell,\Hx)d\Hx |^2}} \ ,
\end{equation}
where, in accordance with Eq.~\eqref{eq:A(I+VG)}, the function $f$ is given by 
\begin{align}
f(\Br,\Hx) = e^{ik\Hx\cdot\Br}+\frac{k^2h^3\eta_0}{|\Br-\Br_*|}e^{ik(|\Br-\Br_*|+\Hx\cdot\Br_*)} \ .
\end{align}
Computing the above integrals, we obtain 
\begin{equation}
\label{eq:AIVG_one}
\mu(A(I+V\Gamma))=\max_{j\ne\ell}\left[D_1(\Br_j)D_1(\Br_\ell)\left(\left|\frac{\sin(k|\Br_j-\Br_\ell|)}{k|\Br_j-\Br_\ell|}\right| +D_2(\Br_j,\Br_\ell) \right)\right]\ ,
\end{equation}
where the functions $D_1$ and $D_2$ are defined as
\begin{align}
D_1(\Br)=&\left(1+\frac{kh^3\eta_0 \sin (2 |\Br-\Br_*| )}{k|\Br-\Br_*|^2 }+\frac{k^4h^6\eta_0^2}{|\Br_\ell-\Br_*|^2}\right)^{-1/2}  \nonumber \\
D_2(\Br_j,\Br_\ell)=&\frac{kh^3\eta_0 \left(\sin (k (|\Br_j-\Br_*|+|\Br_\ell-\Br_*|))+k^2h^3\eta_0 e^{i k (|\Br_j-\Br_*|-|\Br_\ell-\Br_*|)}\right)}{ |\Br_j-\Br_*||\Br_\ell-\Br_*|}\ .
\end{align} 
This formula makes evident that the coherence of $A(I+V\Gamma)$ is a scaled version of $\mu(A)$ with an additive term.  The sinc function (which gives $\mu(A)$) is the only term that depends on the distance between $\Br_j$ and $\Br_\ell$.  All other terms depend on the details of the scatterer. 

The dependence of the coherence on $|\Br_j-\Br_*|$ is shown in Fig.~\ref{fig:coh2} for several values of $\eta_0$.  We have verified numerically that the above maximum is obtained by setting $|\Br_j-\Br_\ell|=h$ and $|\Br_j-\Br_*|=|\Br_\ell-\Br_*|$. We see that the coherence is given by the maximum value of the curve, as indicated by the dots.  Interestingly, for stronger scattering ($\eta_0=0.2$) the maximum is achieved by taking points close to the scatterer (with the closest distance being $h$), while for weaker scattering, the maximum is obtained at approximately $2h$.  Thus stronger scattering increases the coherence and there is a trade off between coherence and nonlinearity, as further illustrated below.  
 \begin{figure}
\centering
\includegraphics[width=0.5\linewidth]{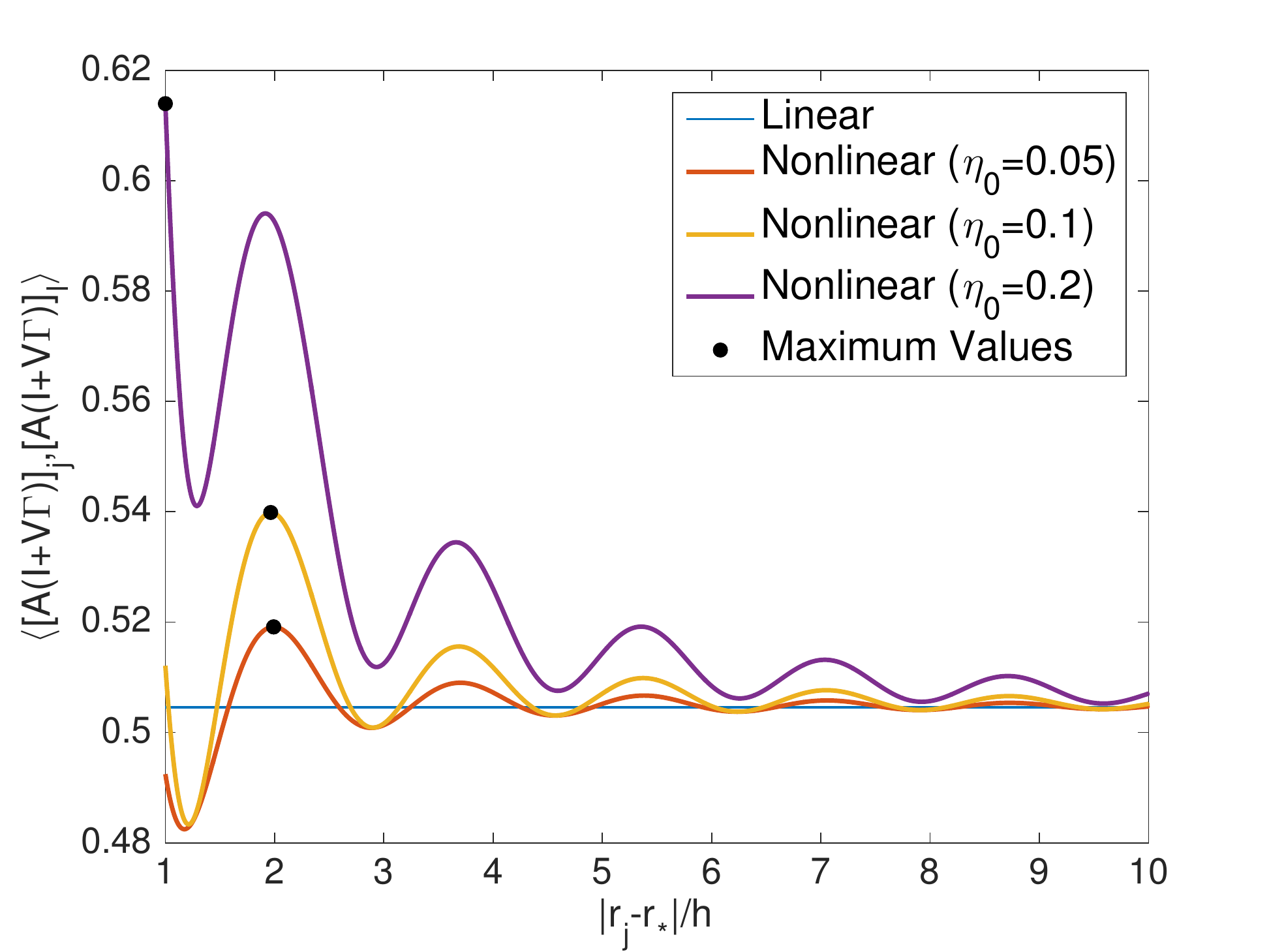}
\caption{Plots of the coherence \eqref{eq:AIVG_one} for $N_s=500$, $kh=1.885$ and $\eta_0=0.05, 0.1$, and 0.2.  The value $|\Br_j-\Br_\ell|=h$ is fixed, and $|\Br_j-\Br_*|=|\Br_\ell-\Br_*|$.  The coherence for each case is the maximum value of the appropriate plot, as indicated by the black dot.}
\label{fig:coh2}
\end{figure}

\subsubsection{The case $s>$1} 
For $s>1$, let $\mathcal{I}$ index the locations of the scatterers, with $|\mathcal{I}|=s$.    If each scatterer has the same potential $\eta_0$,  the coherence is given by Eq.~\eqref{eq:muAIVG_ex} with
\begin{equation}
\label{eq:f_def_s}
f(\Br,\Hx) = e^{ik\Hx\cdot\Br}+k^2h^3\eta_0\sum_{n\in\mathcal{I}}\frac{e^{ik(|\Br-\Br_n|+\Hx\cdot\Br_n)}}{|\Br-\Br_n|} \ ,
\end{equation} 
which is derived by computing the entries of the matrix $A(I+V\Gamma)$.  
The required integrals can still be computed analytically, but become quite cumbersome.  Thus, we will resort to numerical calculations.  Fig.~\ref{fig:coh3} displays the numerically computed coherences for varying numbers of scatterers. In each experiment, $s$ scatterers are uniformly randomly placed inside the unit cube, which has been discretized into a $10\times10\times10$ voxel grid, with $N_s=500$ and $kh=1.885$.   We compare the case when $\eta_0$ is fixed for all $s$, and the case when $\eta_0$ changes but $\|V\Gamma\|_1$ is fixed.  In the latter case, the coherence decreases as $s$ increases.   Moreover, there is a minimal difference between the coherence for the first and second Born approximations. 
We also compare the results of Figs.~\ref{fig:coh2} and ~\ref{fig:coh3}, where for $s=1$ and $\eta_0=0.1$, we obtain the analytically computed nonlinear coherence value of 0.540 and the numerically computed value of 0.539.   Overall this observation confirms the conservative nature of the mutual coherence estimates.  

\begin{figure}
\centering
\begin{subfigure}[b]{0.4\textwidth}
                \centering
                \includegraphics[width=\textwidth]{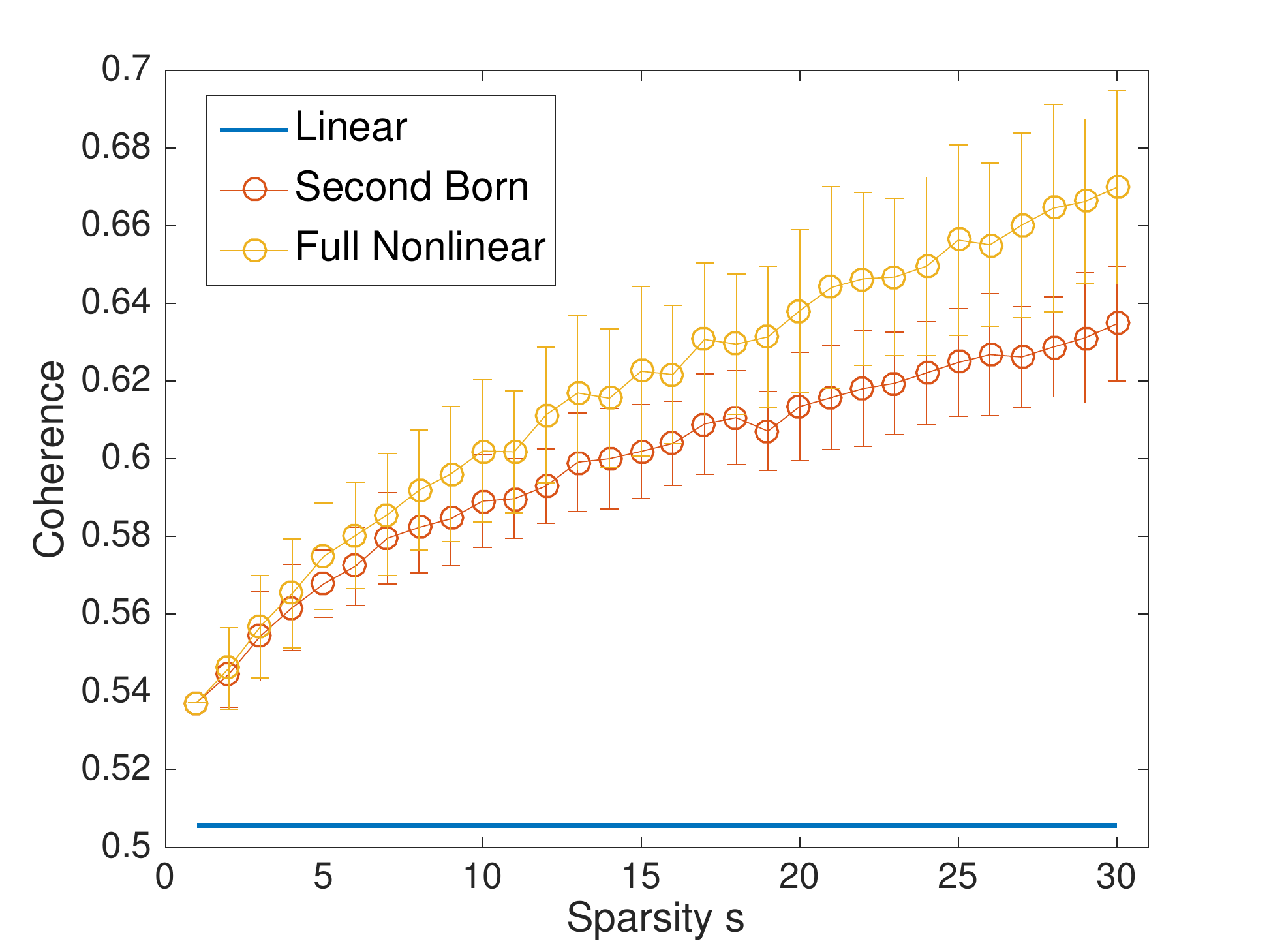}
                \caption{$\eta_0=0.1$}
        \end{subfigure}
\begin{subfigure}[b]{0.4\textwidth}
                \centering
                \includegraphics[width=\textwidth]{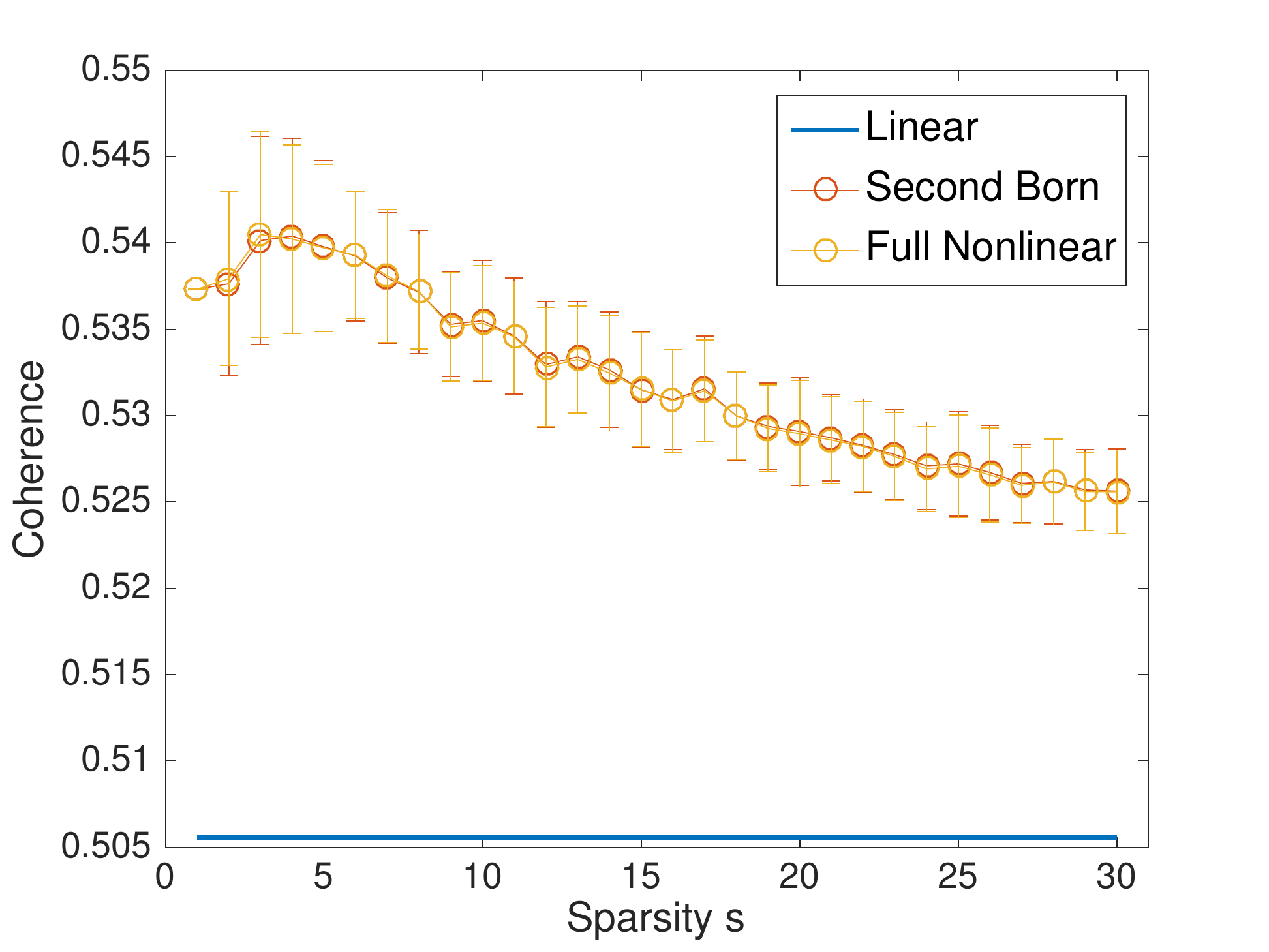}
                \caption{$\eta_0$ varies to fix $\|V\Gamma\|_1=0.3553$}
        \end{subfigure}
\caption{Plots of the numerically computed coherence for varying number of scatterers $s$ with fixed potential $\eta_0$.  The three curves represent the coherence of the linear matrix $\mu(A)$, the second Born approximation matrix $\mu(A(I+V\Gamma))$ and the fully nonlinear matrix $\mu(A(I-V\Gamma)^{-1})$.  For each level of sparsity, $s$ inhomogeneities were uniformly randomly placed in $\Omega$.  The plotted coherence values are the average over 100 realizations, with error bars signifying one standard deviation in each direction.  For the plot on the left, $\eta_0=0.1$ is fixed, while on the right hand, $\eta$ was chosen adaptively so that $\|V\Gamma\|_1=0.3553$ was fixed, a number corresponding to $\eta_0=0.1$ when $s=1$.  Note that for the analyses of both nonlinear algorithms we used the pessimistic bound $\mu\le1$.
}
\label{fig:coh3}
\end{figure}

\subsection{Comparison with convergence theorems}
\label{subsec:conv_comp}

We consider the same experimental setup as above, but at higher frequencies with $kh=1.885$.  Evidently, convergence of the nonlinear IHT algorithm is not guaranteed.  We also note that in the linear case with $\mu(A)=\mu(B^*)=0.505$, convergence is not guaranteed, since no choice of $s$ satisfies the condition $\rho=\mu(A)\mu(B^*)(3s+1)<1$.   In this section, we investigate some examples that shed some light on the theoretical convergence results.  We will see that the theory for the nonlinear IHT algorithm is quite conservative, and that even when the theory does apply, faster convergence than predicted is obtained in practice.   

Since the previous choice of parameters fell outside of the theoretical convergence regime, we consider two experiments with extreme choices of parameters for which the theoretical convergence guarantees hold.  Here the domain $\Omega$ is discretized into a 10$\times$10$\times$10 grid.  For each experiment, three voxels are chosen uniformly at random to have nonzero potential $\eta_0$.   The sources consist of 400 incident plane waves at uniformly spaced angles over the entire sphere, with the scattering amplitude being measured in the far field for the same angles.  Forward data is generated using the coupled-dipole method \cite{1973ApJ186705P}.  

For the first experiment,  $\Omega$ is a cube with sidelengths $4.75\lambda$, which makes the dimensionless parameter $kh=2.98$.  This high frequency setup results in coherence values of $\mu(A)=\mu(B)= 0.0525$.  The potential $\eta_0$ is set to $10^{-5}$.  After choosing the three scatterer locations at random (uniformly), we find that $\delta=\|\Gamma V\|_{\max}=0.0395$ and $\gamma=\|\Gamma V\|_{1}=0.0460$. For the second experiment, the cubic region is taken to be smaller with sides of length 4.55$\lambda$ with $kh=2.858$. We then obtain $\mu(A)=\mu(B)= 0.098$.  The potential has also been increased to $\eta_0=10^{-4}$, which gives $\delta=\|\Gamma V\|_{\max}=0.987$ and $\gamma=\|\Gamma V\|_{1}=0.1299$ after choosing the three scatterer locations uniformly at random.    
For both experiments, the theoretical error bounds $\|\Bv-\Bv_n\|_1$ for linear, second Born approximation and nonlinear IHT are shown in Fig.~\ref{fig:convplots1}. Also shown, for comparison, are numerical results obtained from the reconstruction algorithms under the same conditions with the thresholding parameter set to $s=3$. In Fig.~\ref{fig:convplots1} we plot the dependence of the $L^1$ error on the number of iterations of the reconstruction algorithms for both experiments. The results from numerical reconstructions are compared to 
Theorems 4.5 and 4.6. As may be expected, we find that the theoretical error bounds are conservative.

\begin{figure}[h]
\centering
\begin{subfigure}[b]{0.4\textwidth}
                \centering
                \includegraphics[width=\textwidth]{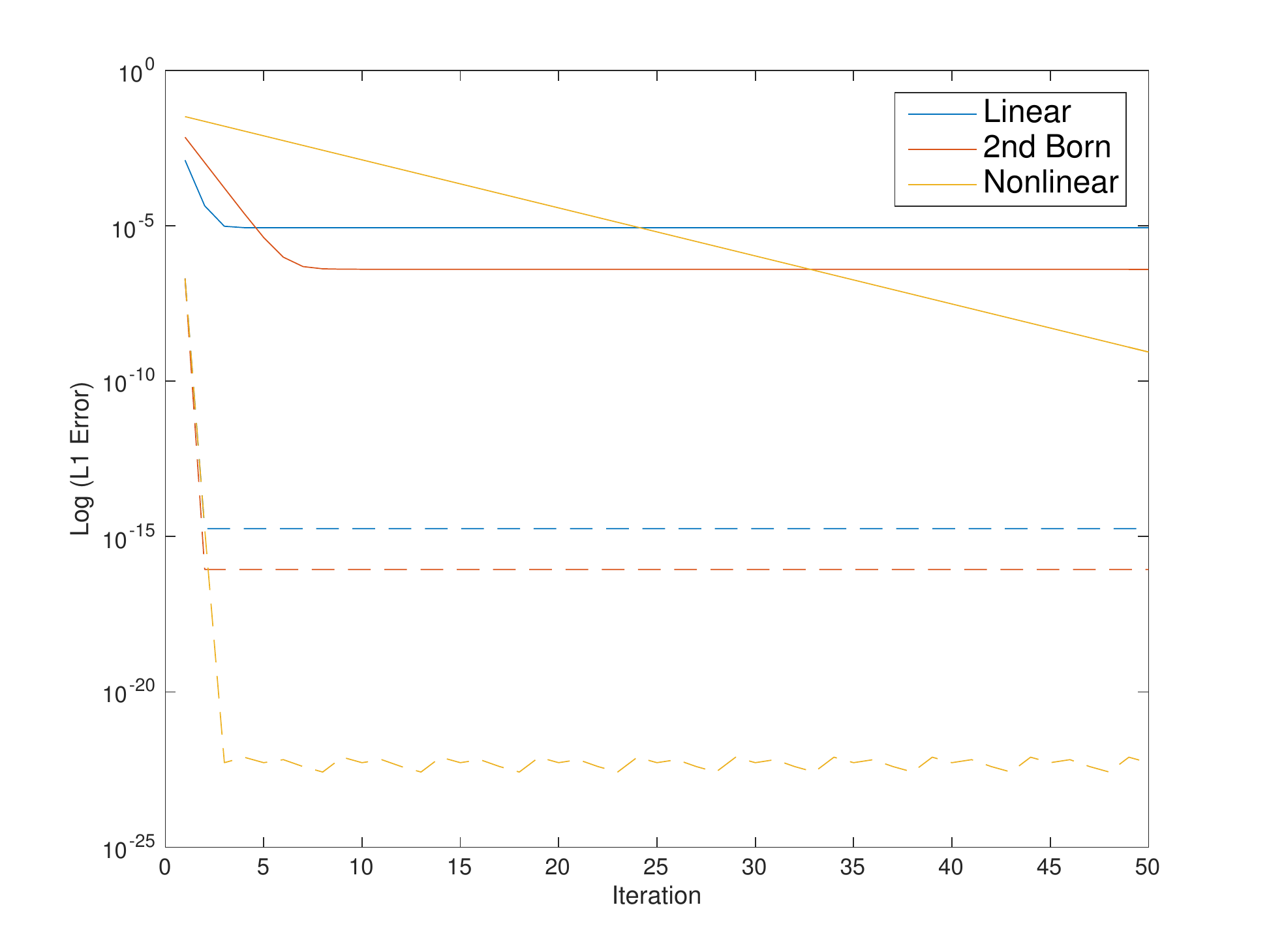}
                \caption{$\mu(A)=\mu(B^*)=0.0525$ \\ $\delta=0.0395$, $\gamma=0.046$, $s=3$}
        \end{subfigure}
\begin{subfigure}[b]{0.4\textwidth}
                \centering
                \includegraphics[width=\textwidth]{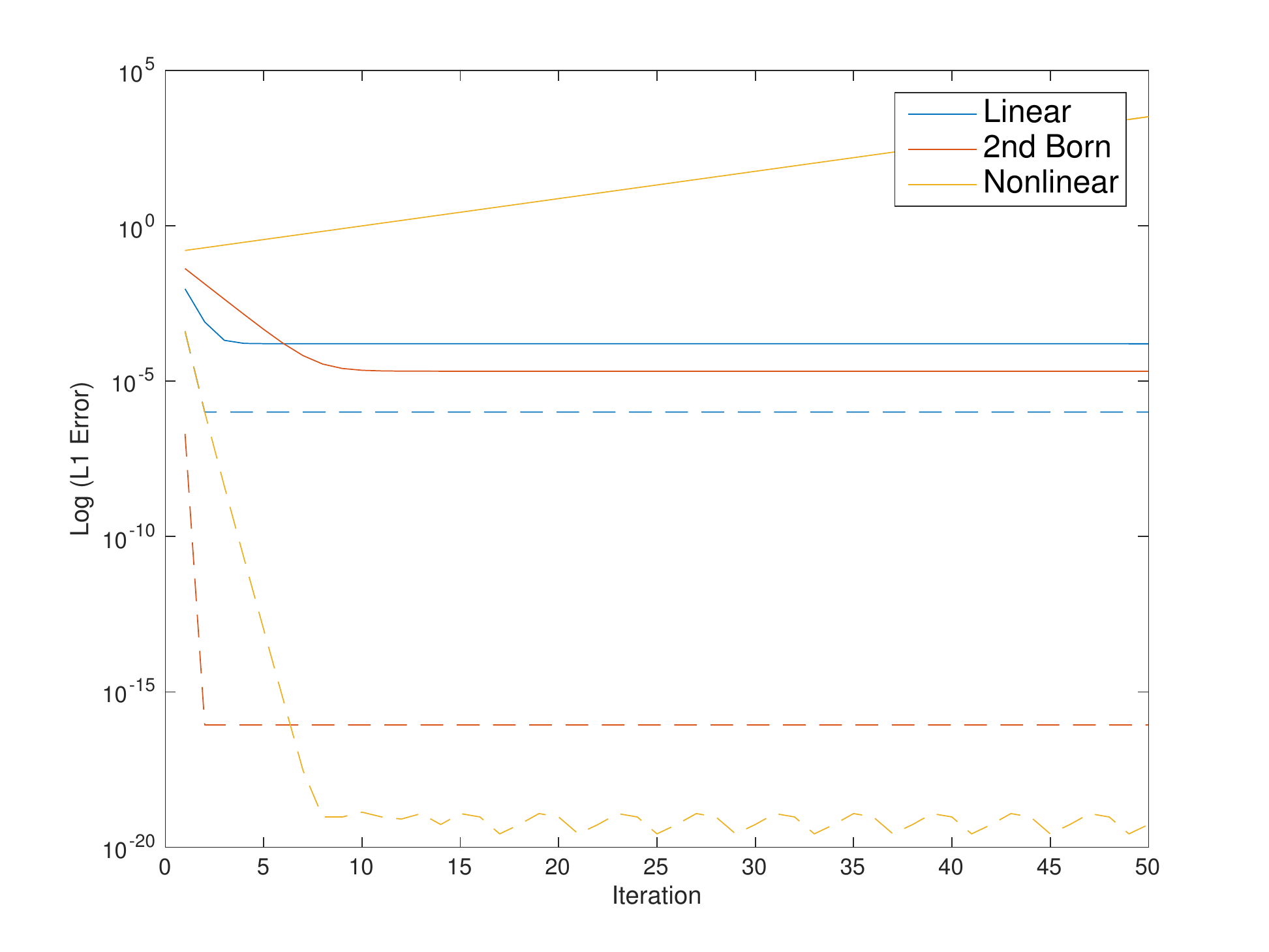}
                \caption{$\mu(A)=\mu(B^*)=0.098$ \\ $\delta=0.0987$, $\gamma=0.1299$, $s=3$}
        \end{subfigure}
\caption{Comparison of the error $\|\Bv-\Bv_n\|_1$ for numerical reconstructions and theory.  The left plot has $\mu(A)=\mu(B^*)=0.0525$, $\delta=0.0395$, $\gamma=0.046$, and $s=3$.  The right plot has $\mu(A)=\mu(B^*)=0.098$, $\delta=0.0987$, $\gamma=0.1299$, and $s=3$.  The solid lines indicate the theoretical estimates given by Theorems 4.5 and 4.6 and the dashed lines are the results from numerical simulations. Note that for strongernonlinearity (as shown on the right) the theory does not guarantee convergence, but the simulated reconstruction is within the convergence regime.} 
\label{fig:convplots1}
\end{figure}

\subsection{Numerical Reconstructions}
\label{sec:sims}

In this section we illustrate the use of the nonlinear IHT algorithm in settings where convergence cannot be expected.  
Two models are used to test the algorithm: two spherical scatterers of constant potential (model 1) and one larger spherical scatterer with radially varying potential (model 2).  The scatterers are contained in the cubic domain $\Omega$ with sides of length $5\lambda$. In model 1 the scatterers have potential $\eta_0$, radii $\lambda/2$, and are separated by a distance of $3\lambda/2$ between their centers. In model 2, the scatterer has radius $5\lambda/4$ and potential that decreases linearly from the value $\eta_0$ to 0 at its center. Tomographic slices for each model are shown in Fig.~\ref{fig:models}.  
 \begin{figure}[h!]
\centering
\includegraphics[width=0.65\linewidth,trim={0 4cm 0 4cm},clip]{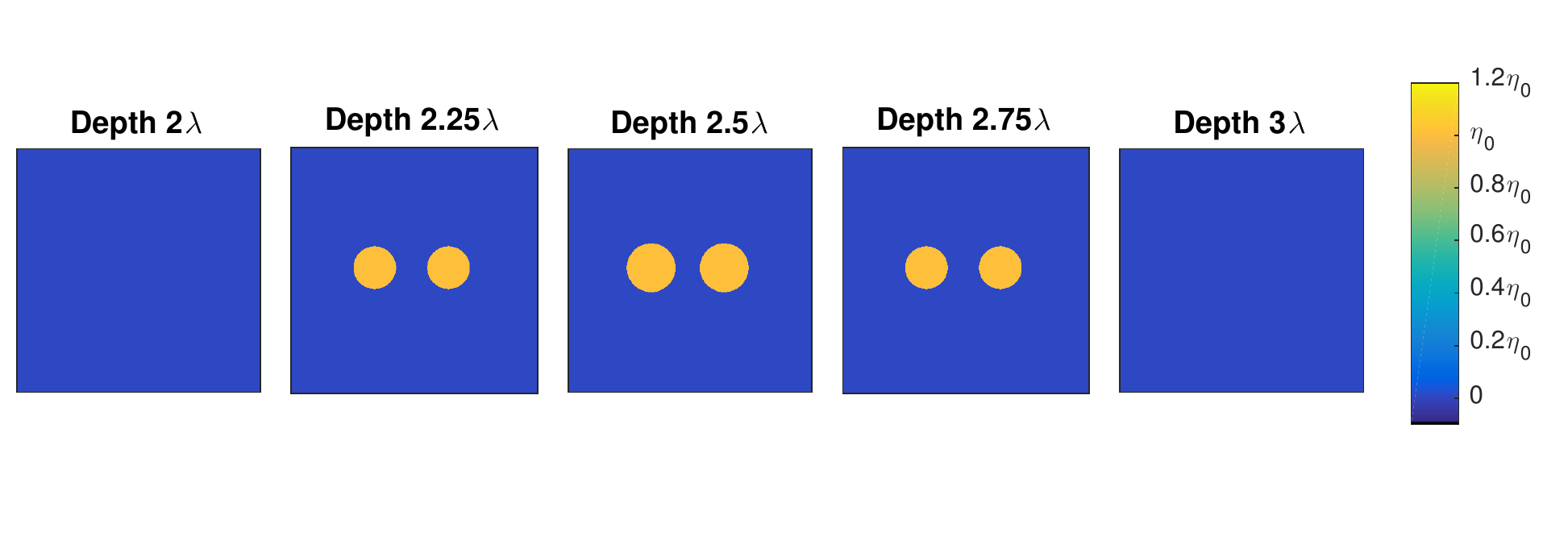}
\includegraphics[width=0.65\linewidth]{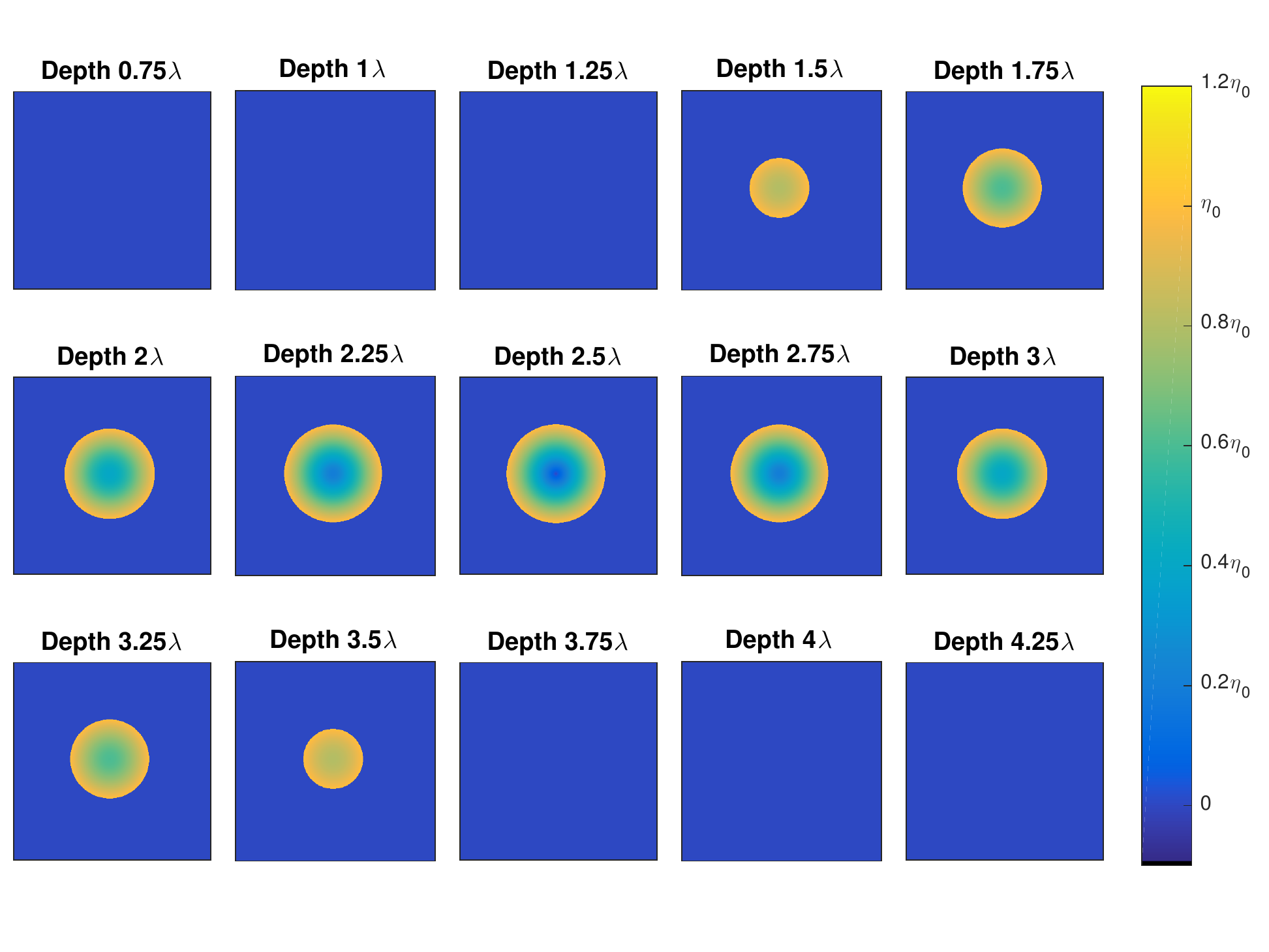}\\
\caption{Tomographic slices of model 1 (top) and model 2 (bottom). The field of view in each  slice is $5\lambda \times 5 \lambda$.}
\label{fig:models}
\end{figure}
Forward data was generated using the coupled-dipole method \cite{1973ApJ186705P}, using a $25\times25\times25$ discretization. The incident and measurement directions were chosen to be spaced uniformly on half of the unit sphere, with $N_s=N_d=225$ and $kh=1.2566$.  Gaussian white noise was added to the data at a level of 1\%. Here the coherence is given by$\mu(A)=\mu(B)=0.764$. 

Reconstructions were conducted using a discretization of $\Omega$ into a $21\times21\times21$ cubic mesh, thereby avoiding so-called inverse crime.  Four versions of the IHT algorithm were investigated: linear ($M=1$), second Born ($M=2$), third Born ($M=3$), and fully nonlinear ($M=\infty$). Each algorithm was run for 100 iterations.  In all cases, the iterations are initialized by $V_0=0$. For model 1, the threshold level $s$ for the thresholding operator $H_s$ was set to 230/9,261 = 2.48\%.   Note that the threshold limit in $H_s$ is not equal to the actual sparsity $s$, which is  0.84\% sparse.   While not explained by the theoretical analysis, this level of sparsity leads to better performance, especially in the presence of noise.  Reconstructions of the central slices (corresponding to a depth of $2.5\lambda$) of model 1 using the linear and fully nonlinear algorithms 
are displayed in Table~\ref{tbl:model1}. At the weakest level of scattering, both algorithms perform nearly identically.  However, as the strength of scattering is increased, the linear reconstruction degrades, while the nonlinear algorithm reconstructs the scatterers quite well.  At the highest level of scattering, both algorithms fail.  Convergence plots for all algorithms are also displayed.  For each iteration, the error $Y_{err}$ measures the relative error between the true data $Y$ and data that would be generated by the current reconstruction $V^{(\rm rec)}$.  Consistent with with Eq.~\eqref{eq:disc_forward}, the error is defined as   
\begin{equation}
\label{eq:Yerr}
Y_{\rm err}^2=\frac{\sum_{j=1}^{N_d}\sum_{\ell=1}^{N_s} \left|Y_{j\ell}-\left[A(I-V^{(\rm rec)}\Gamma)^{-1}V^{(\rm rec)}B\right]_{j\ell}\right|^2}{\sum_{j=1}^{N_d}\sum_{\ell=1}^{N_s}\left|Y_{j\ell}\right|^2} \ .
\end{equation}

        \begin{table}[h!]
        \centering
        \begin{tabular}{|c|c|c|c|}
           \toprule
            potential & Linear IHT & Nonlinear IHT & Error  \\
            \midrule
            $\eta_0=0.01$ & \includegraphics[width=0.25\textwidth]{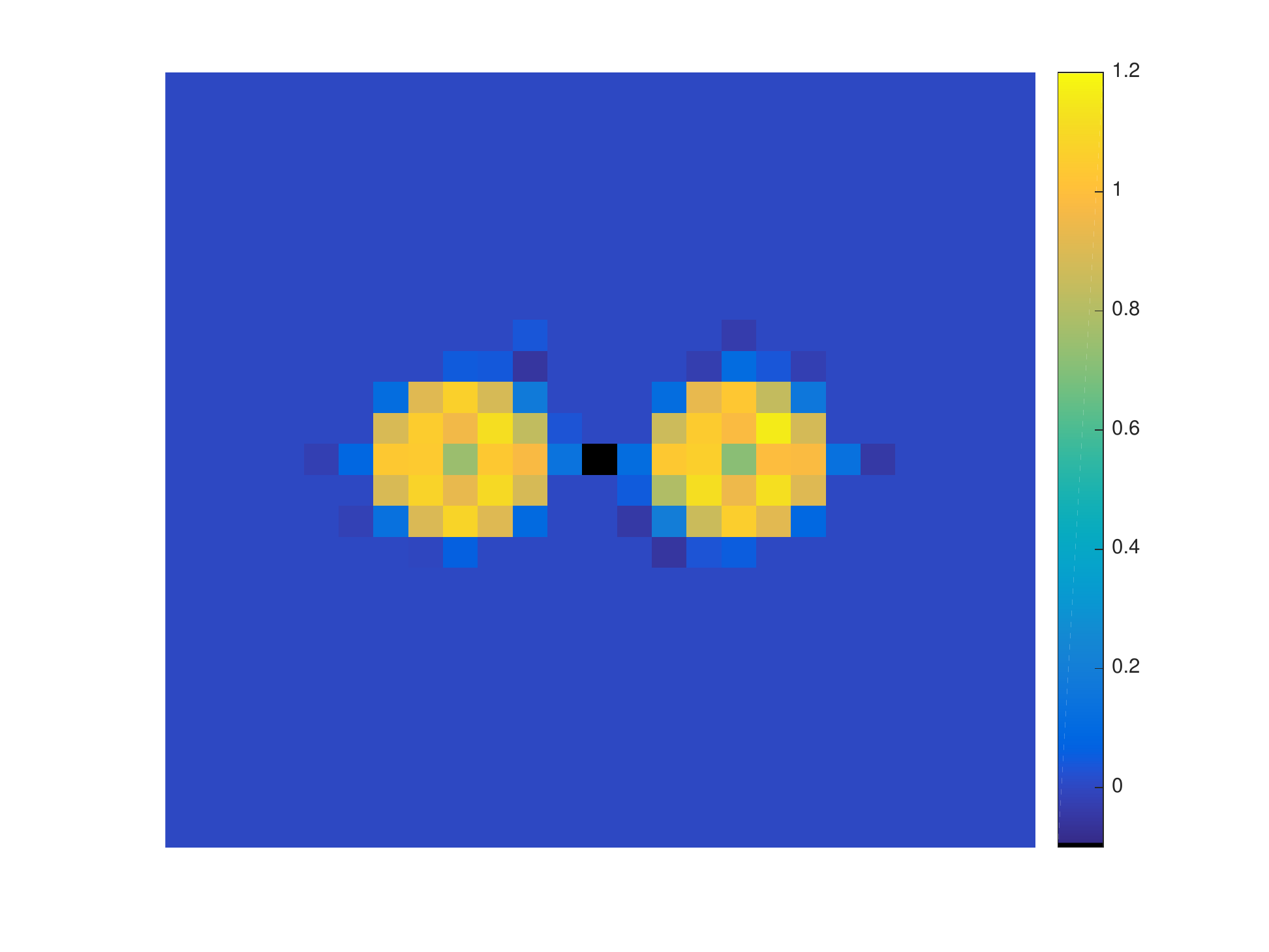} & \includegraphics[width=0.25\textwidth]{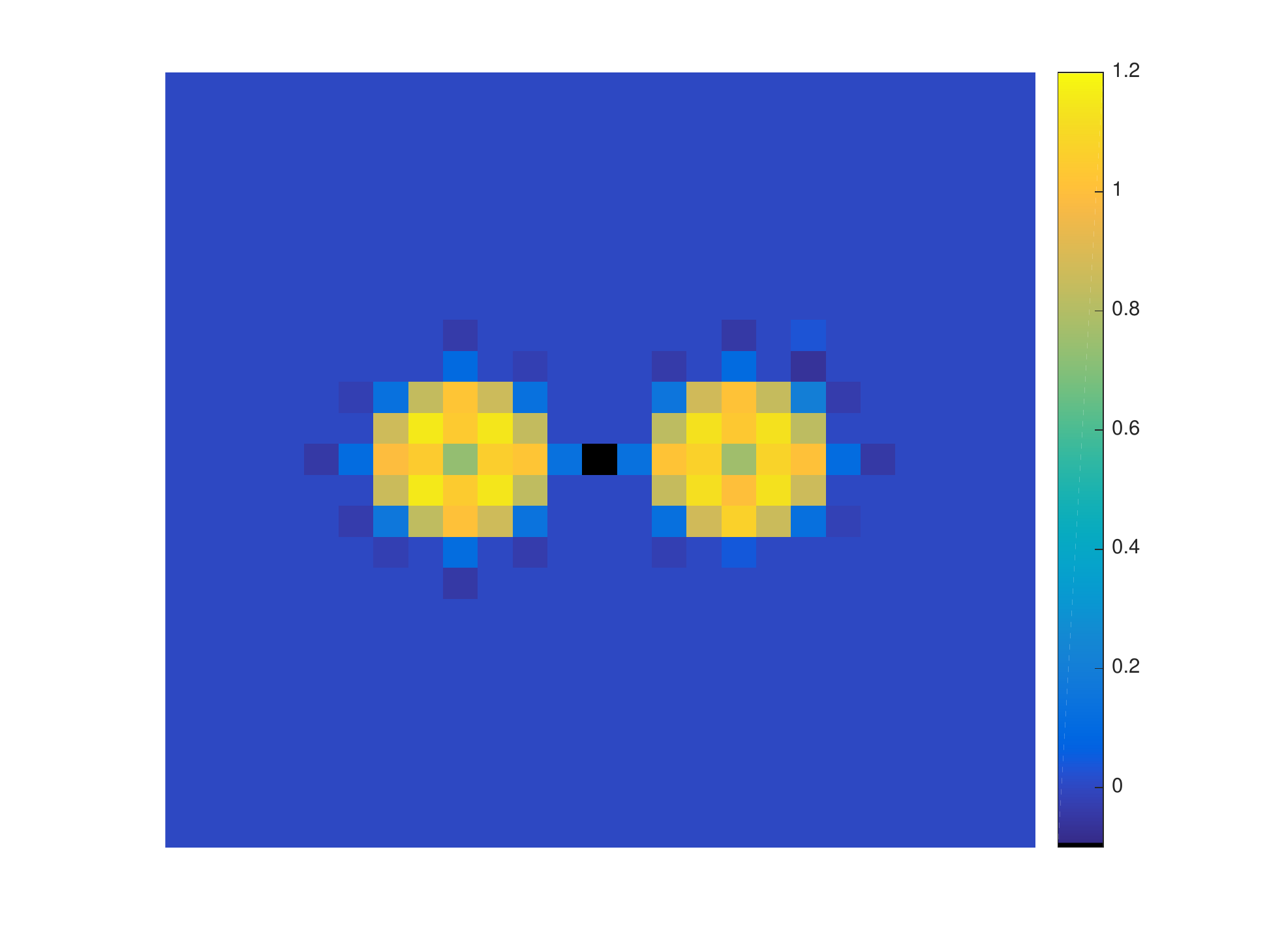} & \includegraphics[width=0.25\textwidth]{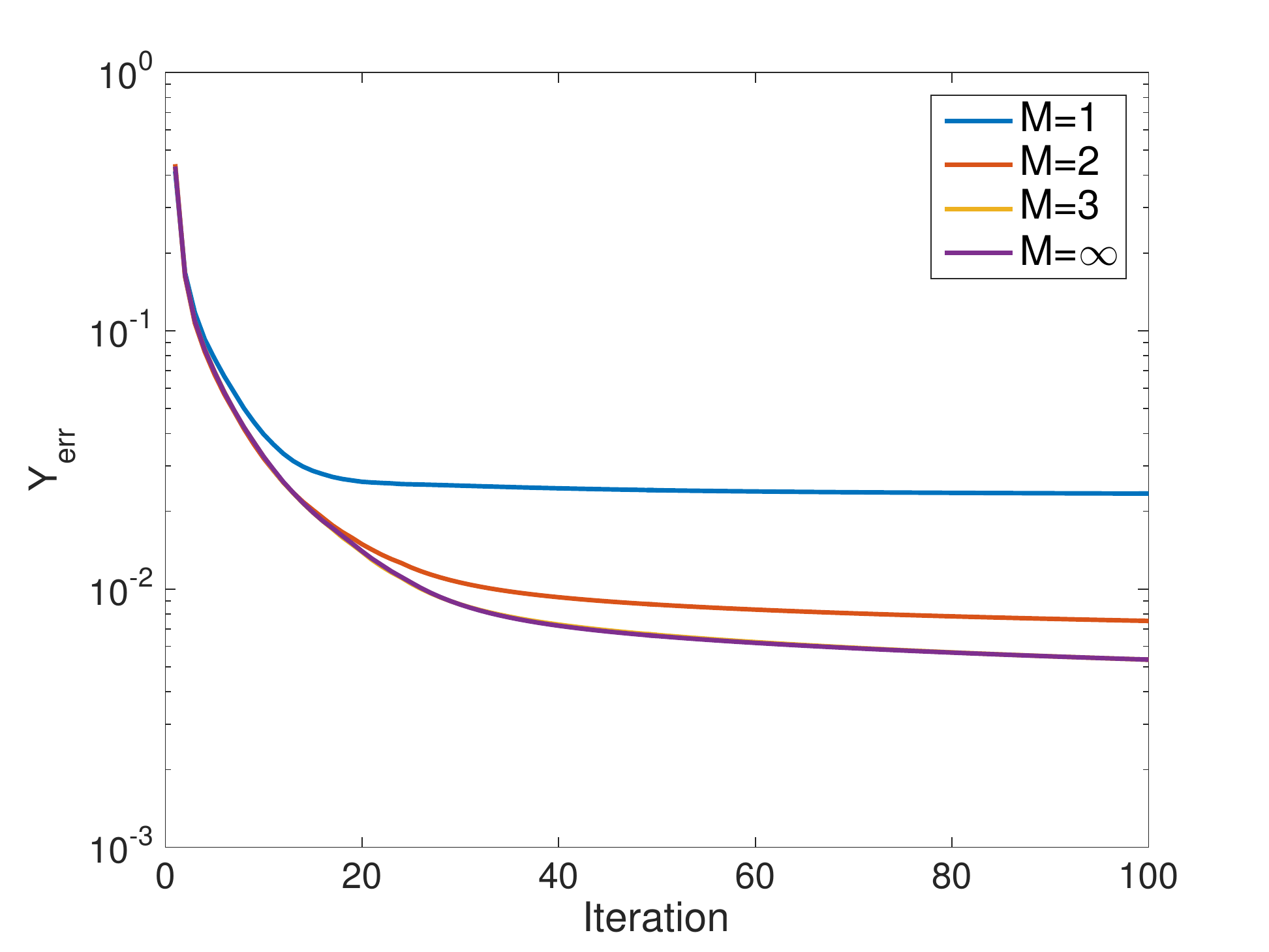} \\ \midrule 
                       $\eta_0=0.06$ & \includegraphics[width=0.25\textwidth]{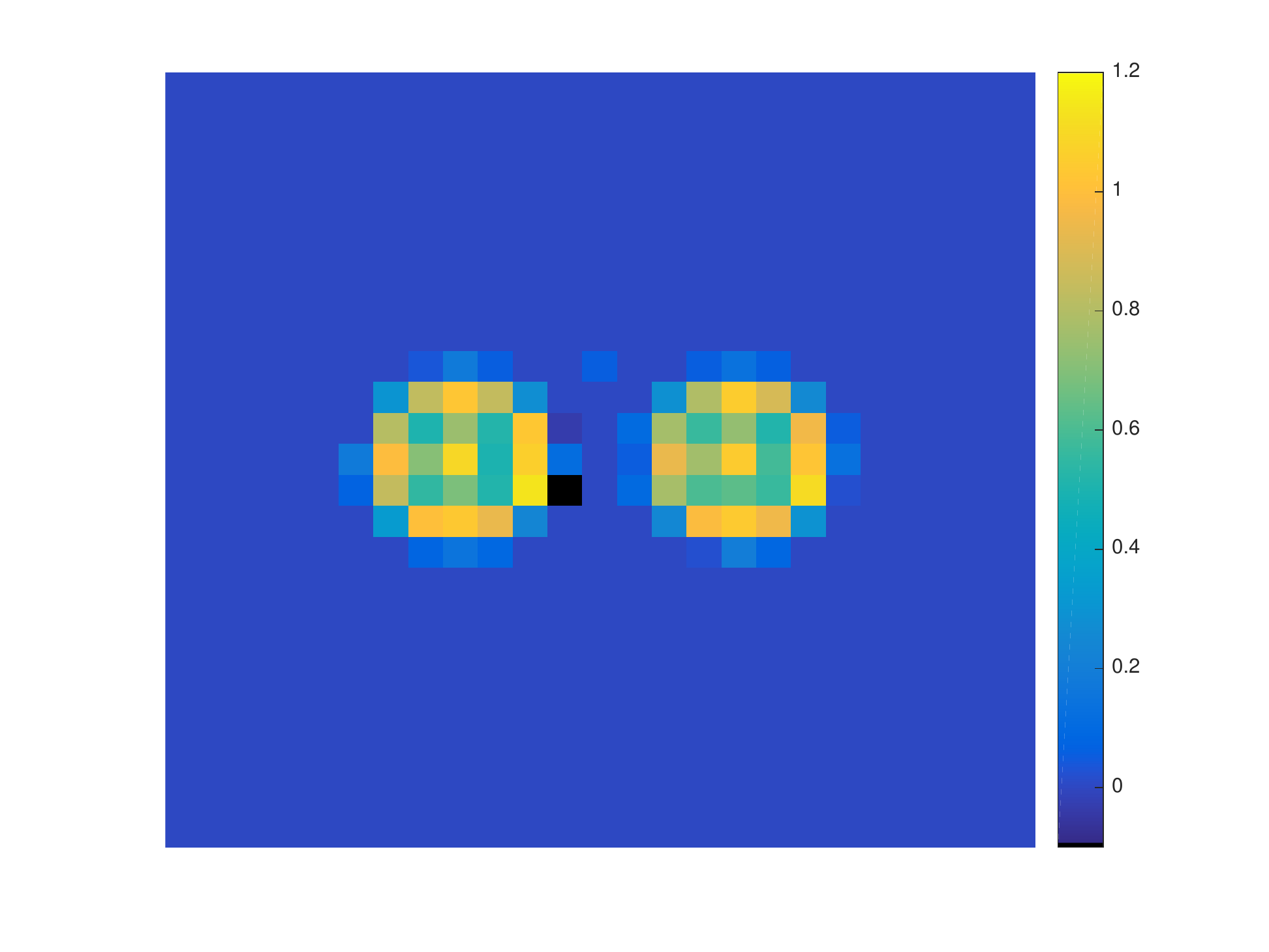} & \includegraphics[width=0.25\textwidth]{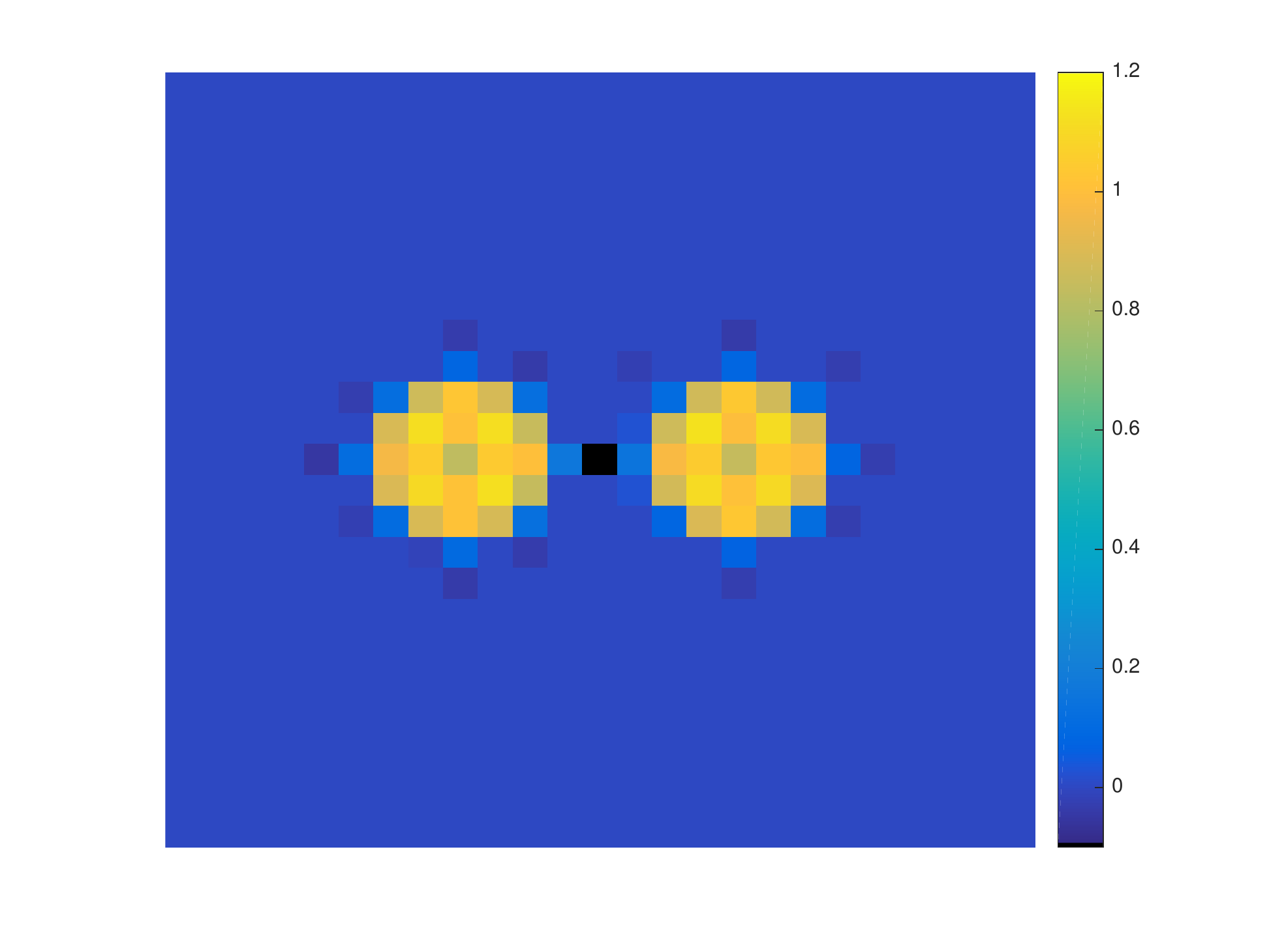}  & \includegraphics[width=0.25\textwidth]{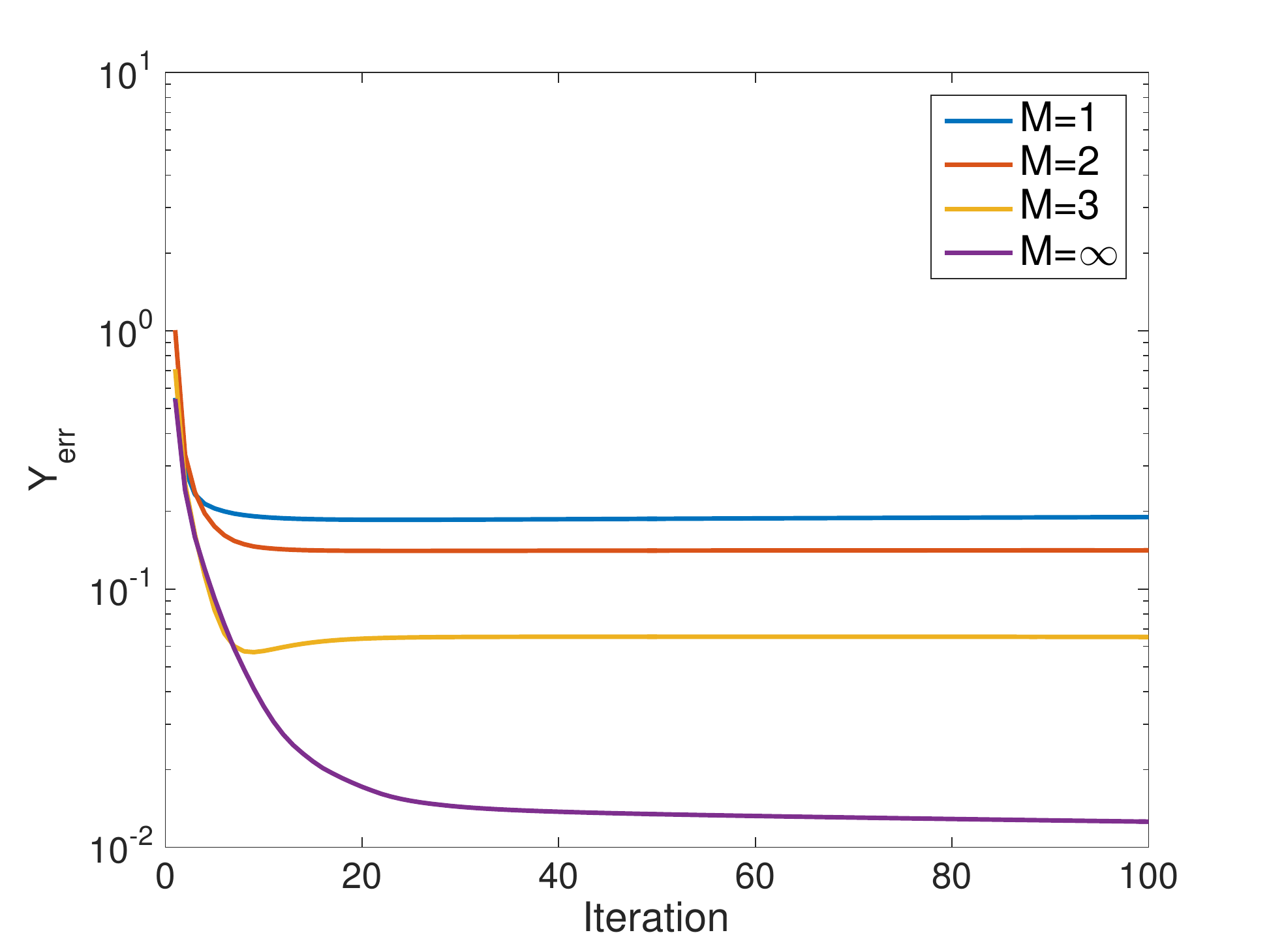}\\ \midrule
                                                                  $\eta_0=0.1$ & \includegraphics[width=0.25\textwidth]{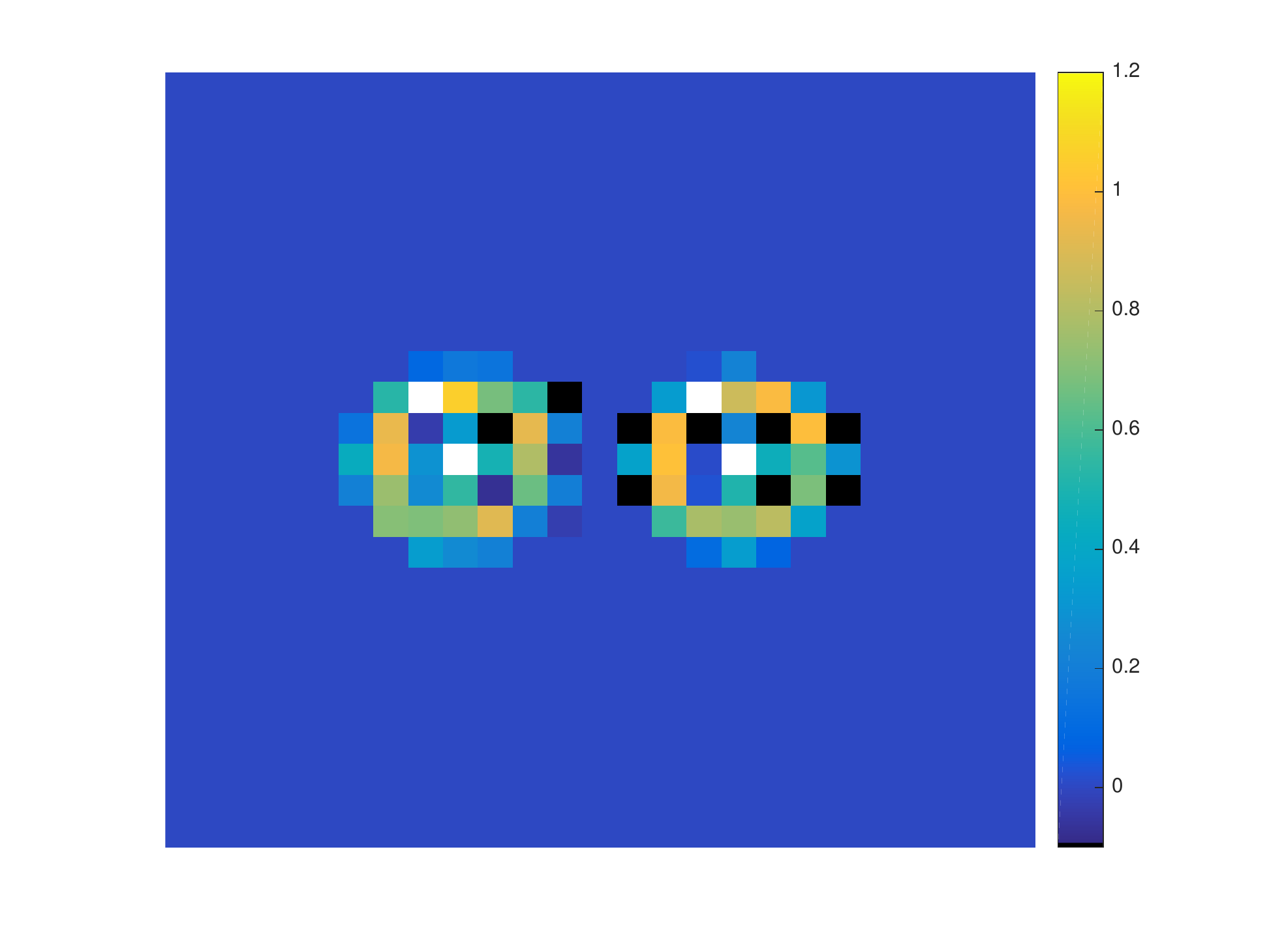} & \includegraphics[width=0.25\textwidth]{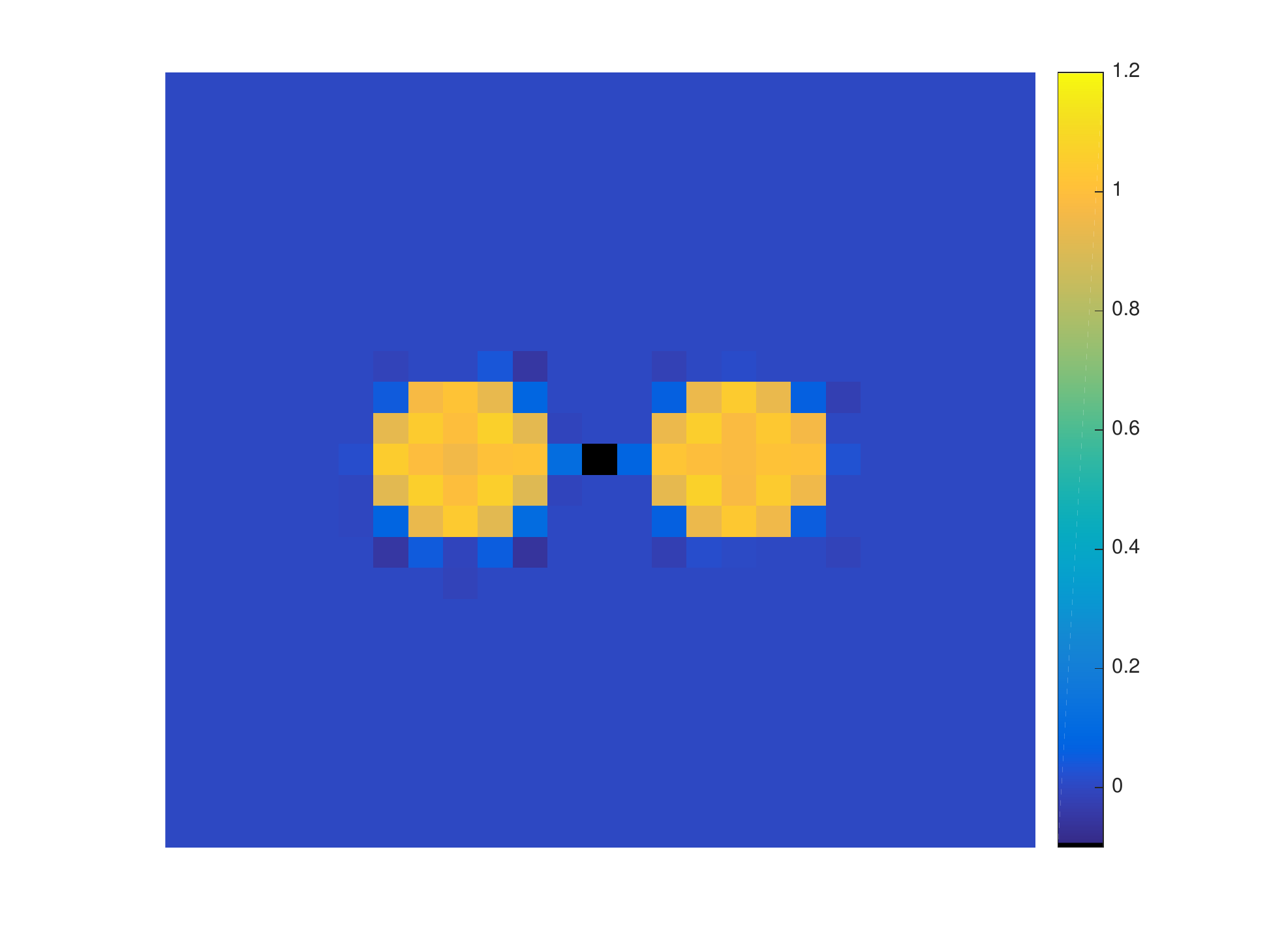} &\includegraphics[width=0.25\textwidth]{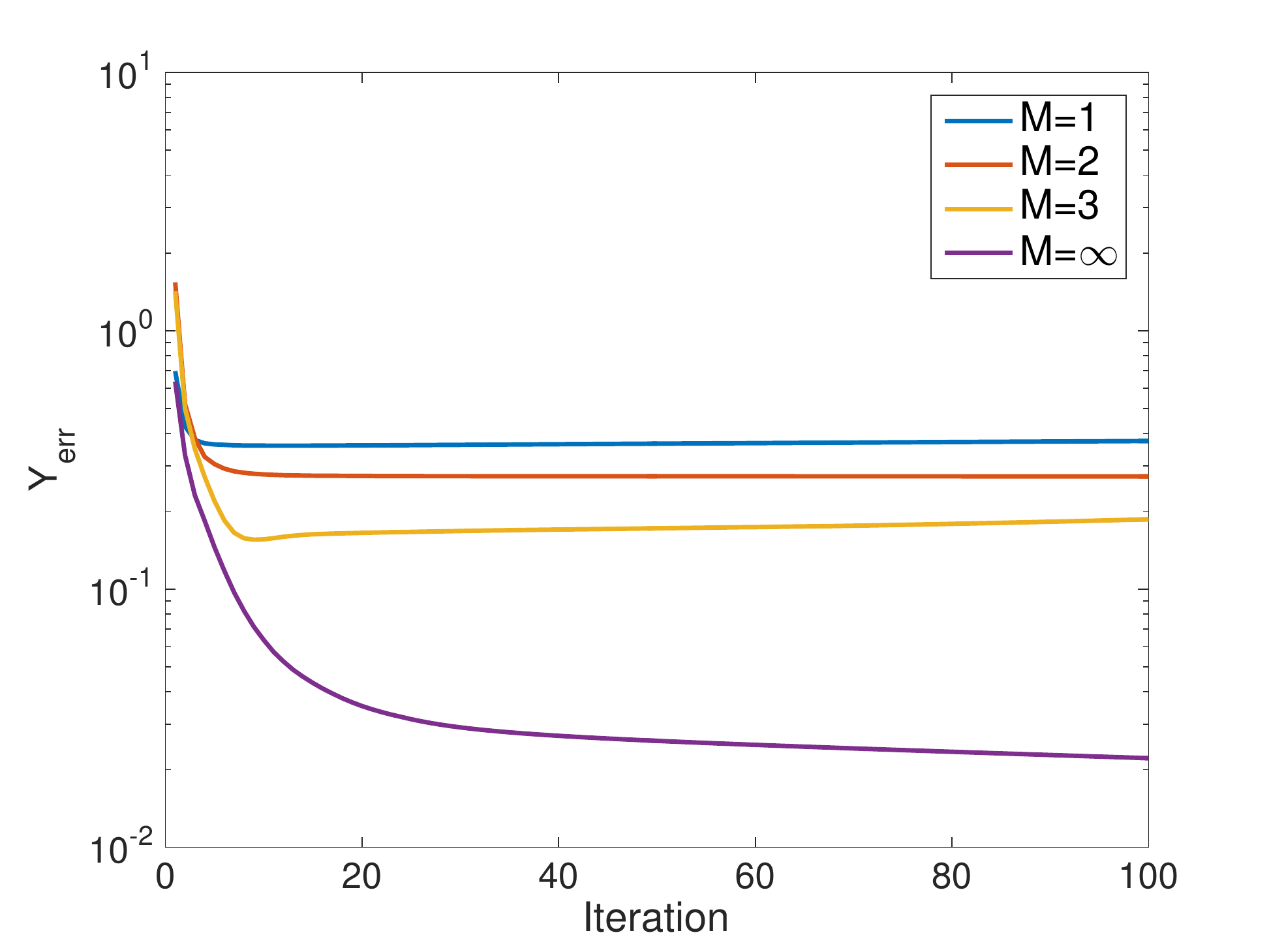}  \\ \midrule
                                               $\eta_0=0.4$ & \includegraphics[width=0.25\textwidth]{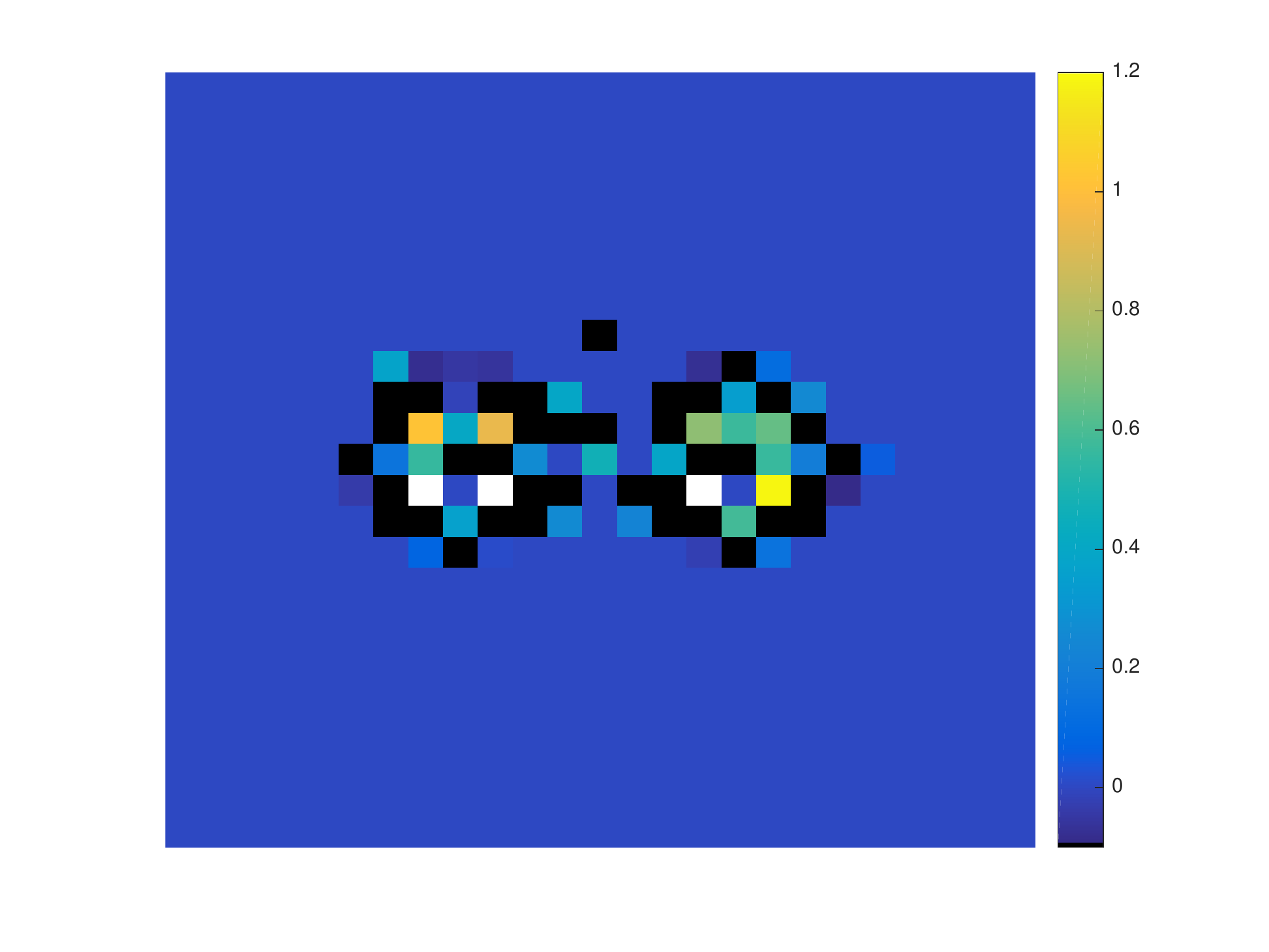} & \includegraphics[width=0.25\textwidth]{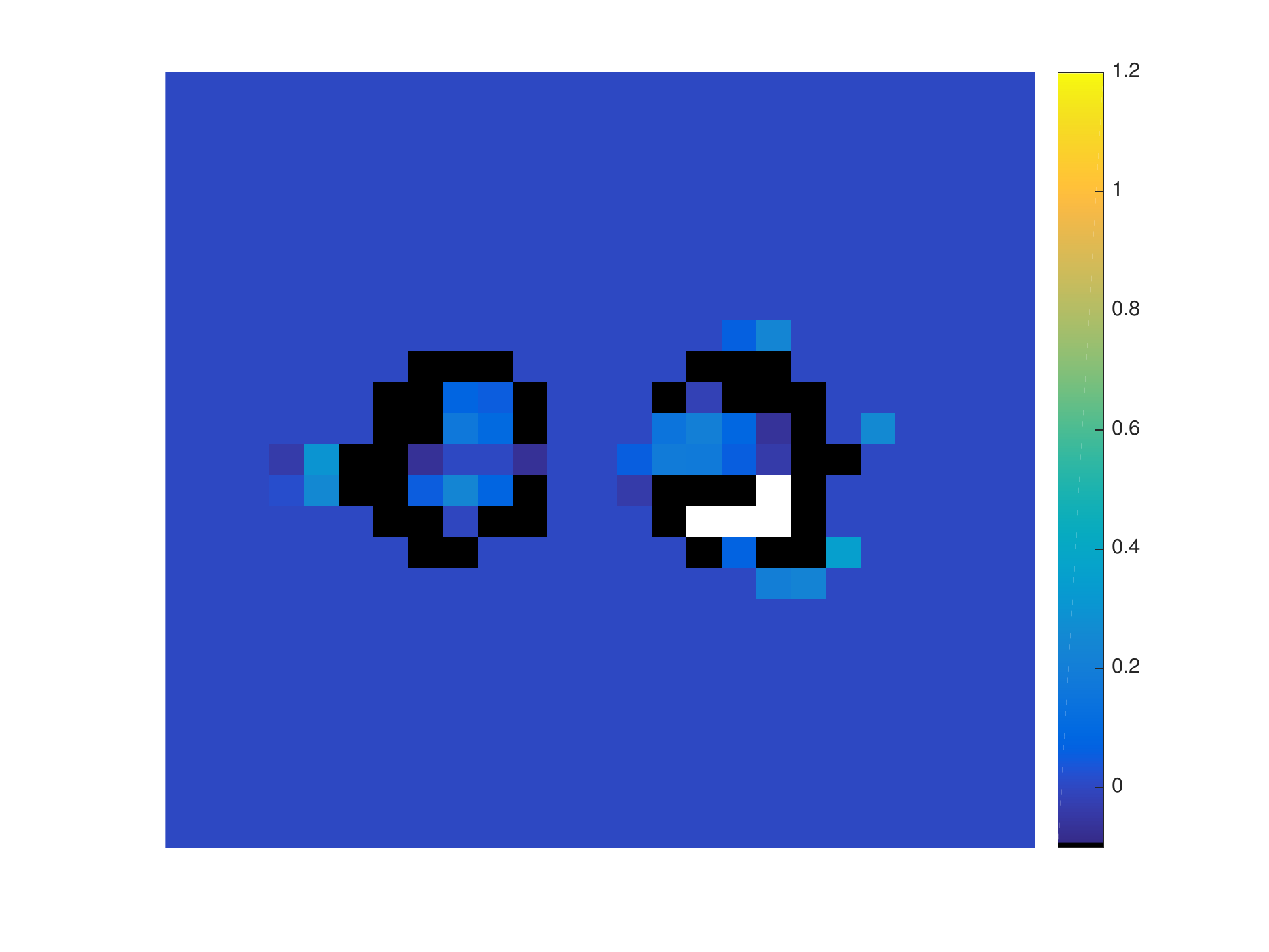} &
 \includegraphics[width=0.25\textwidth]{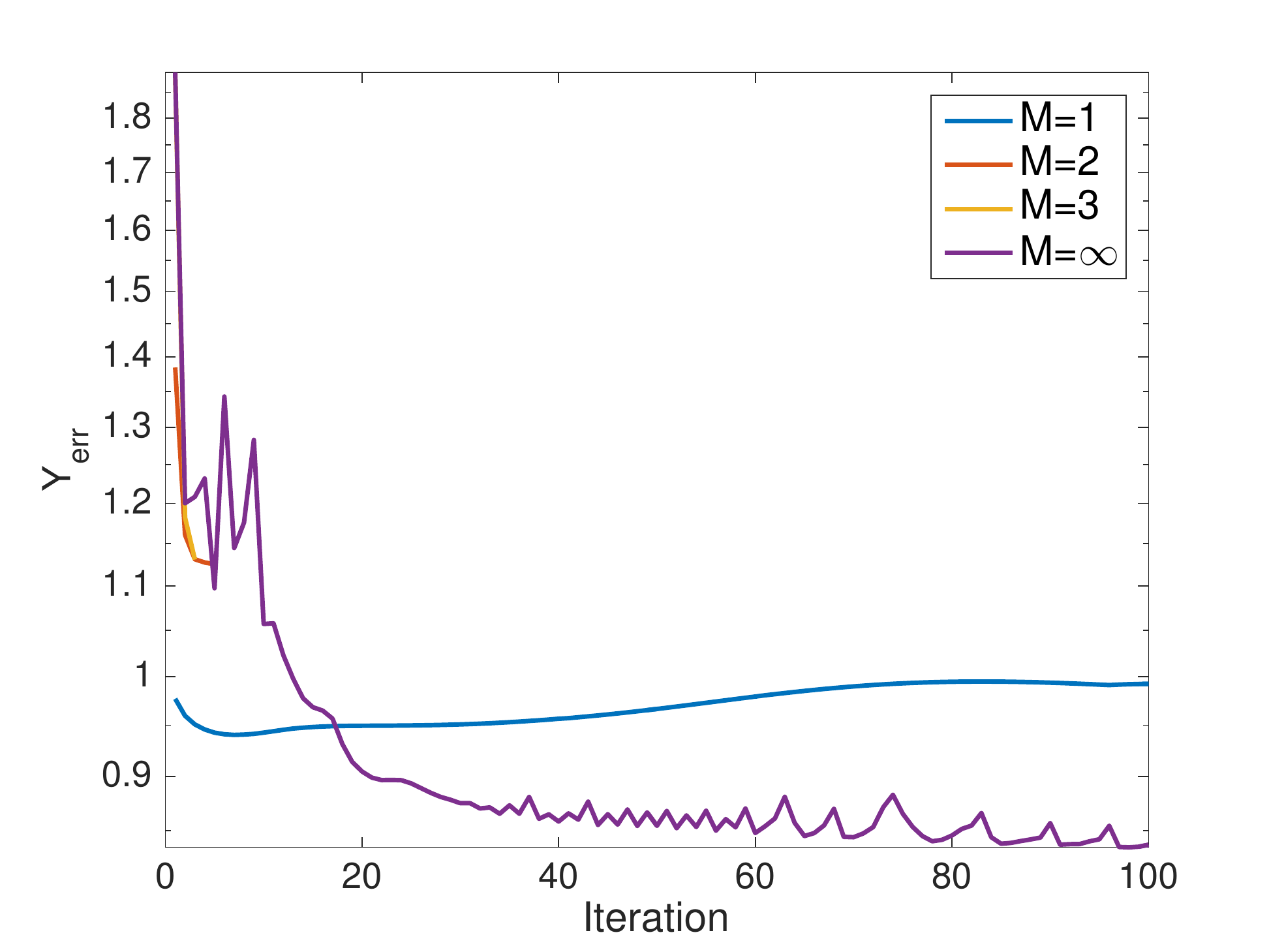} \\
          \bottomrule
        \end{tabular}
        \bigskip
        \caption{The central slice of the reconstructions of model 1 for varying susceptibilities.  A comparison of the linear IHT and the fully nonlinear IHT algorithms is shown.  The last column plots the error $Y_{\rm err}$. The field of view in each image is $5\lambda \times 5 \lambda$.
}
        \label{tbl:model1}
    \end{table}

        \begin{table}[h!]
        \centering
        \begin{tabular}{|c|c|c|c|}
           \toprule
            & $M=1$ & $M=2$ & $M=3$  \\
            \midrule
            $\eta_0=0.06$ & \includegraphics[width=0.27\textwidth]{3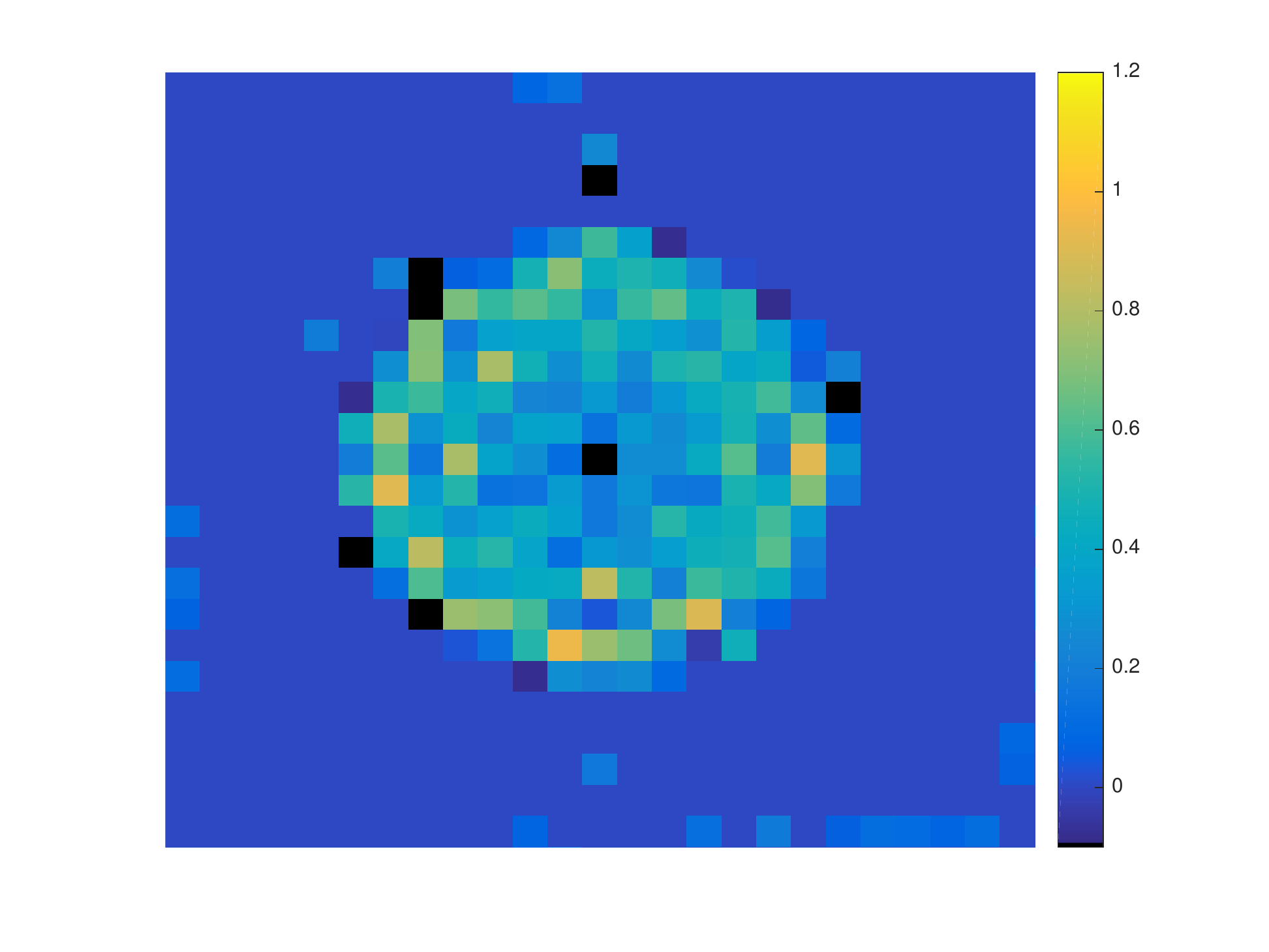} & \includegraphics[width=0.27\textwidth]{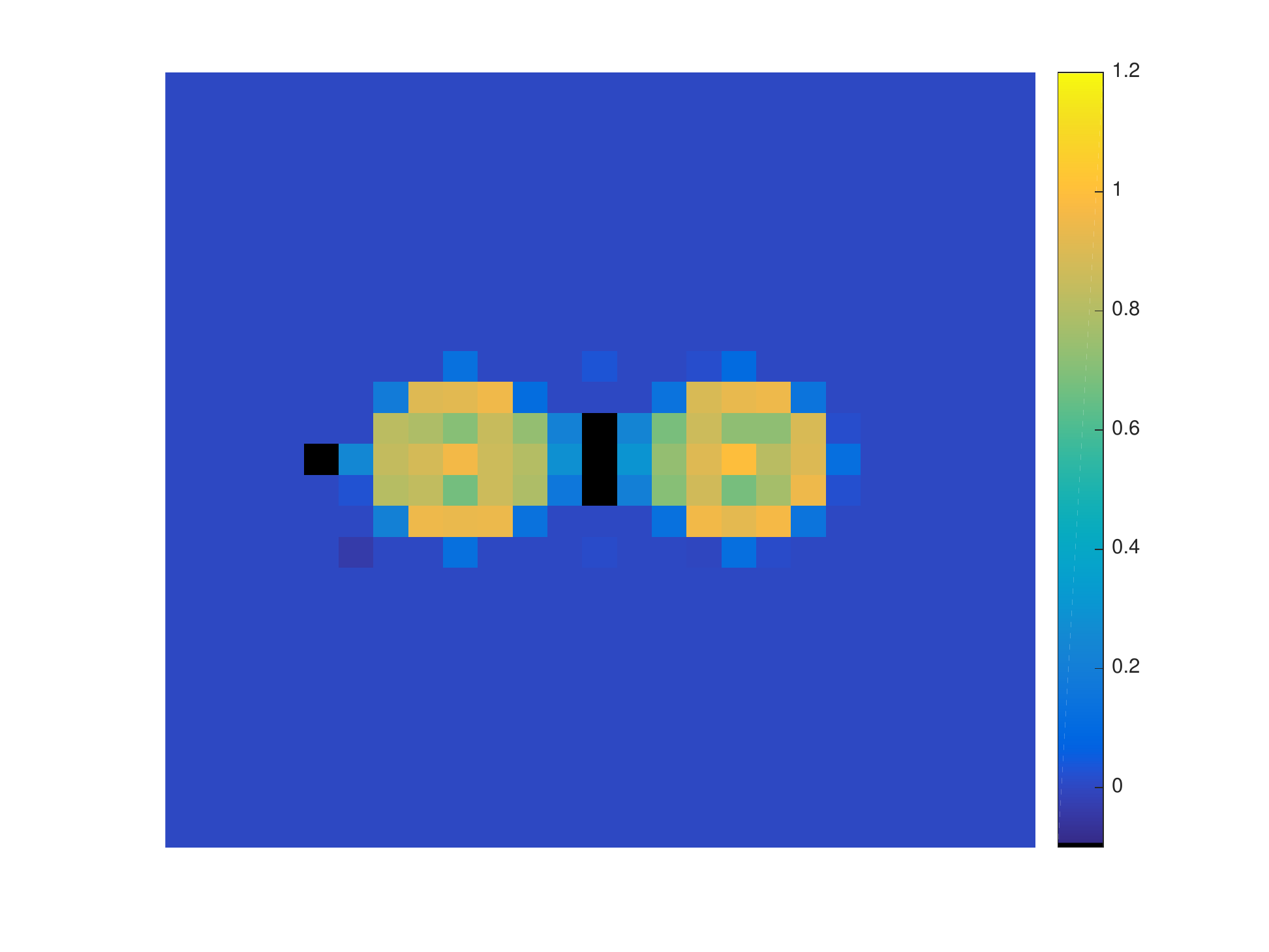} & \includegraphics[width=0.27\textwidth]{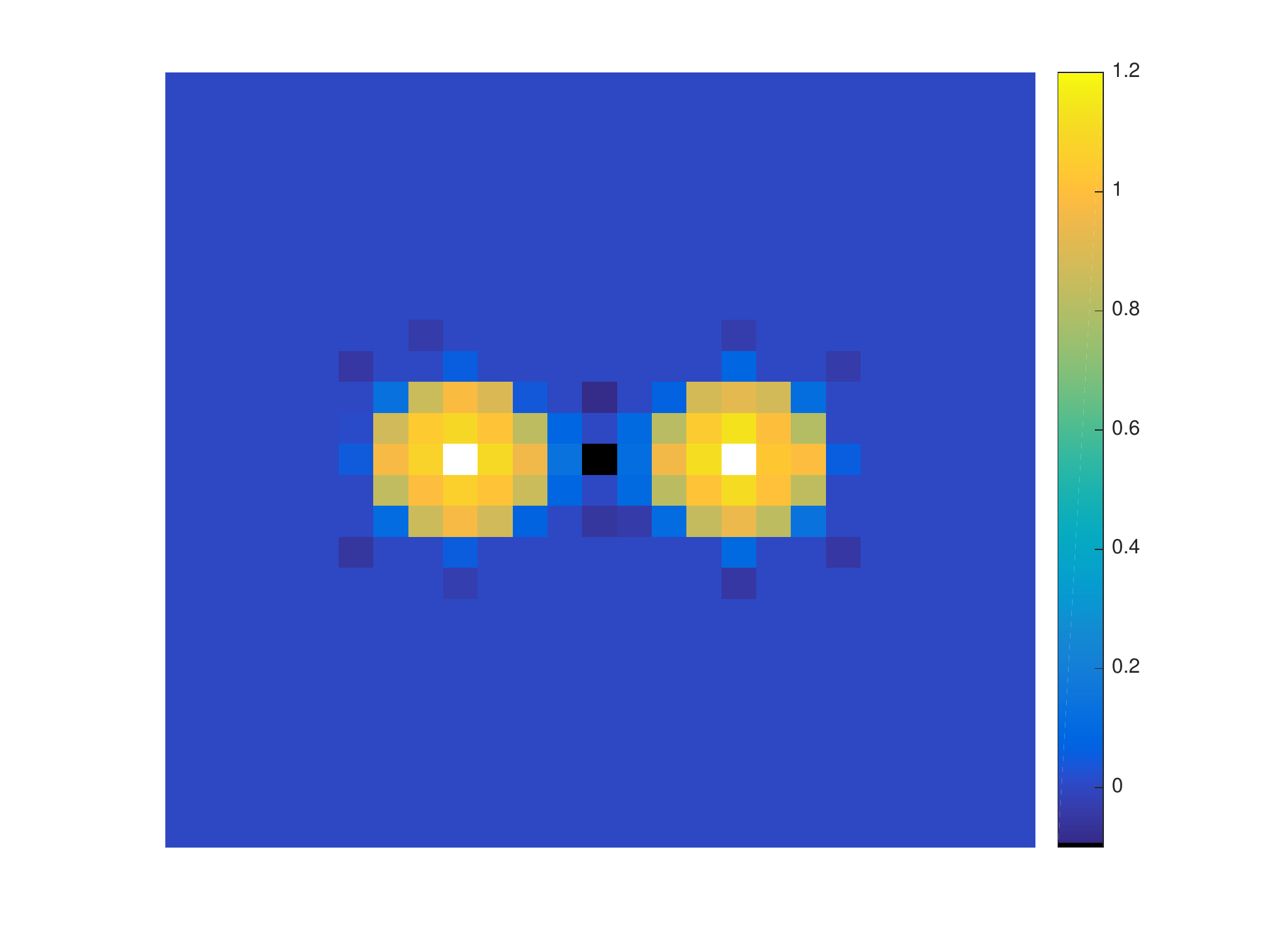}\\  \midrule
            $\eta_0=0.1$ & \includegraphics[width=0.27\textwidth]{2L1-eps-converted-to.pdf} & \includegraphics[width=0.27\textwidth]{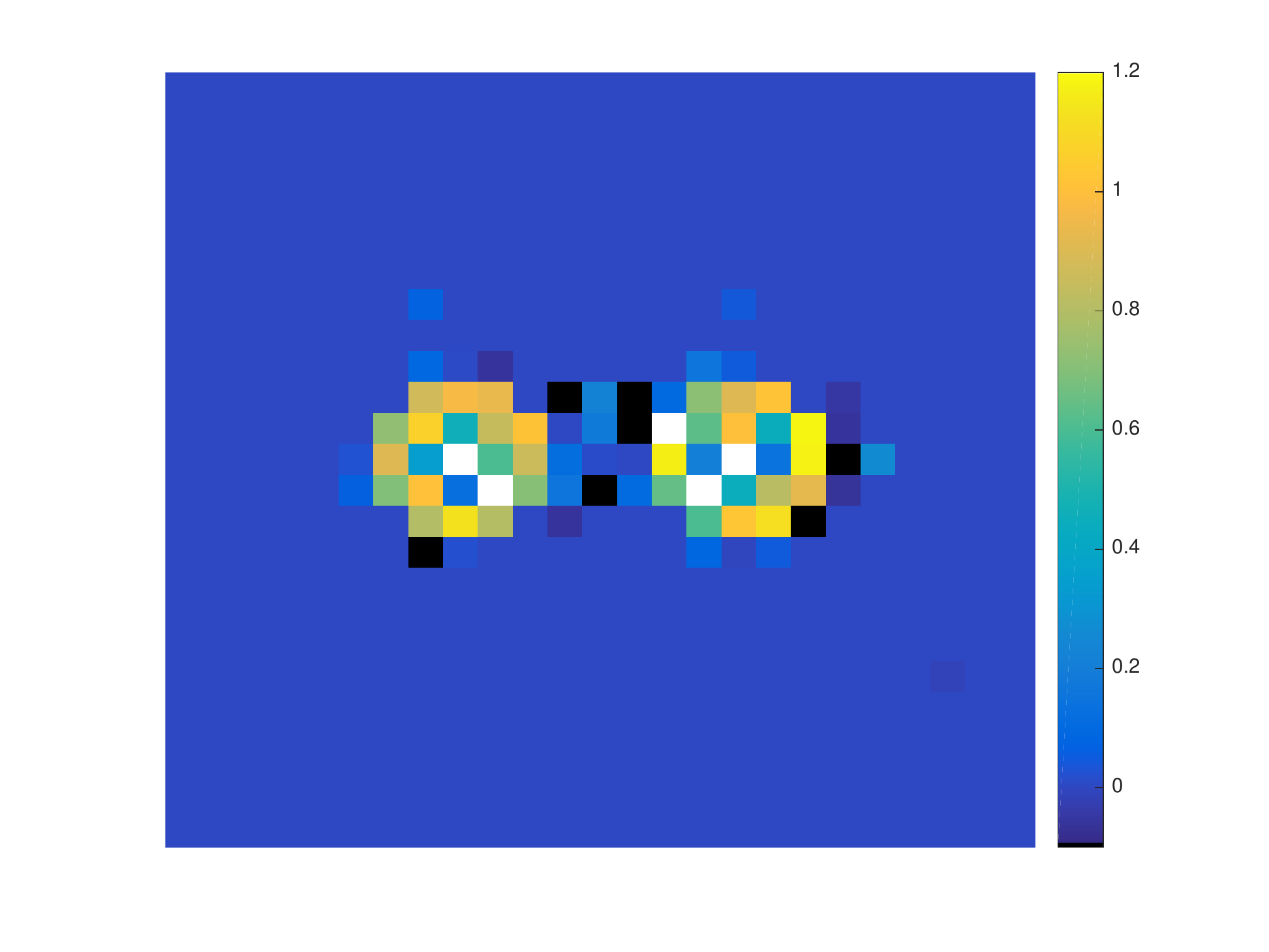} & \includegraphics[width=0.27\textwidth]{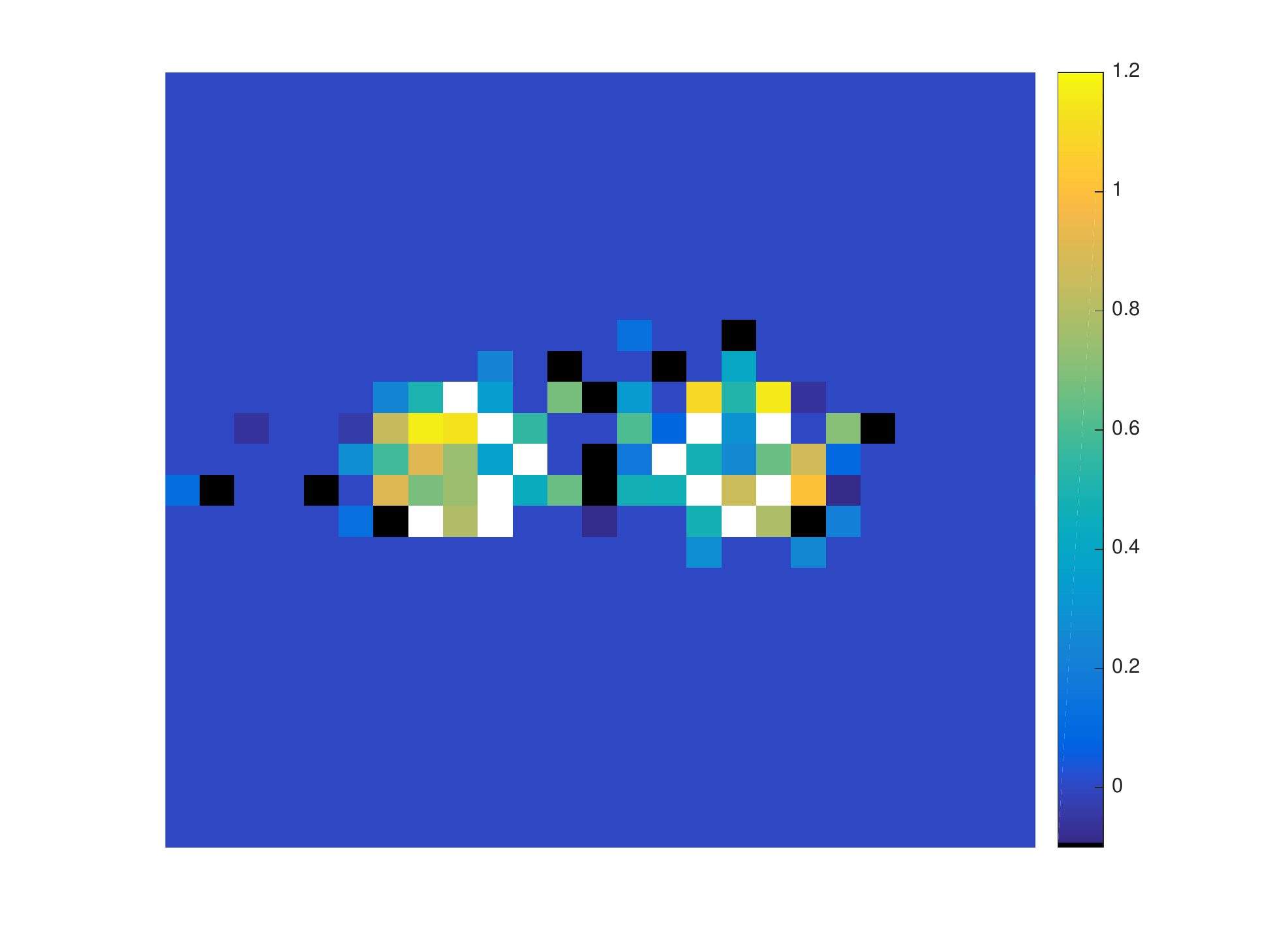} \\
          \bottomrule
        \end{tabular}
        \bigskip
        \caption{
The central slice of the reconstructions for model 1 with $\eta_0$=0.06 and 0.1. The reconstructions were conducted with the linear, second and third Born approximations. The Born series converges for the top row and diverges for the bottom row. Note that in both cases, the fully nonlinear IHT algorithm converges. The field of view in each image is $5\lambda \times 5 \lambda$.   
}
        \label{tbl:model1-2}
    \end{table}  

Next we consider the reconstruction of model 2 using the same parameters as for model 1 with $\|V\Gamma\|_2=0.9813$. This model is substantially less sparse, with 6.5\% sparsity.  A thresholding limit of 1,800 was chosen for all reconstructions. Reconstructions of the central slices of model 2 using the linear and fully nonlinear algorithms 
are displayed in Table~\ref{tbl:model1-2}. Tomographic reconstructions are also shown in Fig.~\ref{fig:rec2_1}. Central slices of the reconstructions for varying number of terms in the Born series are shown in Fig. 8. The convergence of the algorithms is illustrated in Fig. 9. We note that as in model 1, the linear reconstruction algorithm finds the support well and higher orders of scattering recover the interior susceptibilities.

\begin{figure}[t]
\centering
\includegraphics[width=0.65\linewidth]{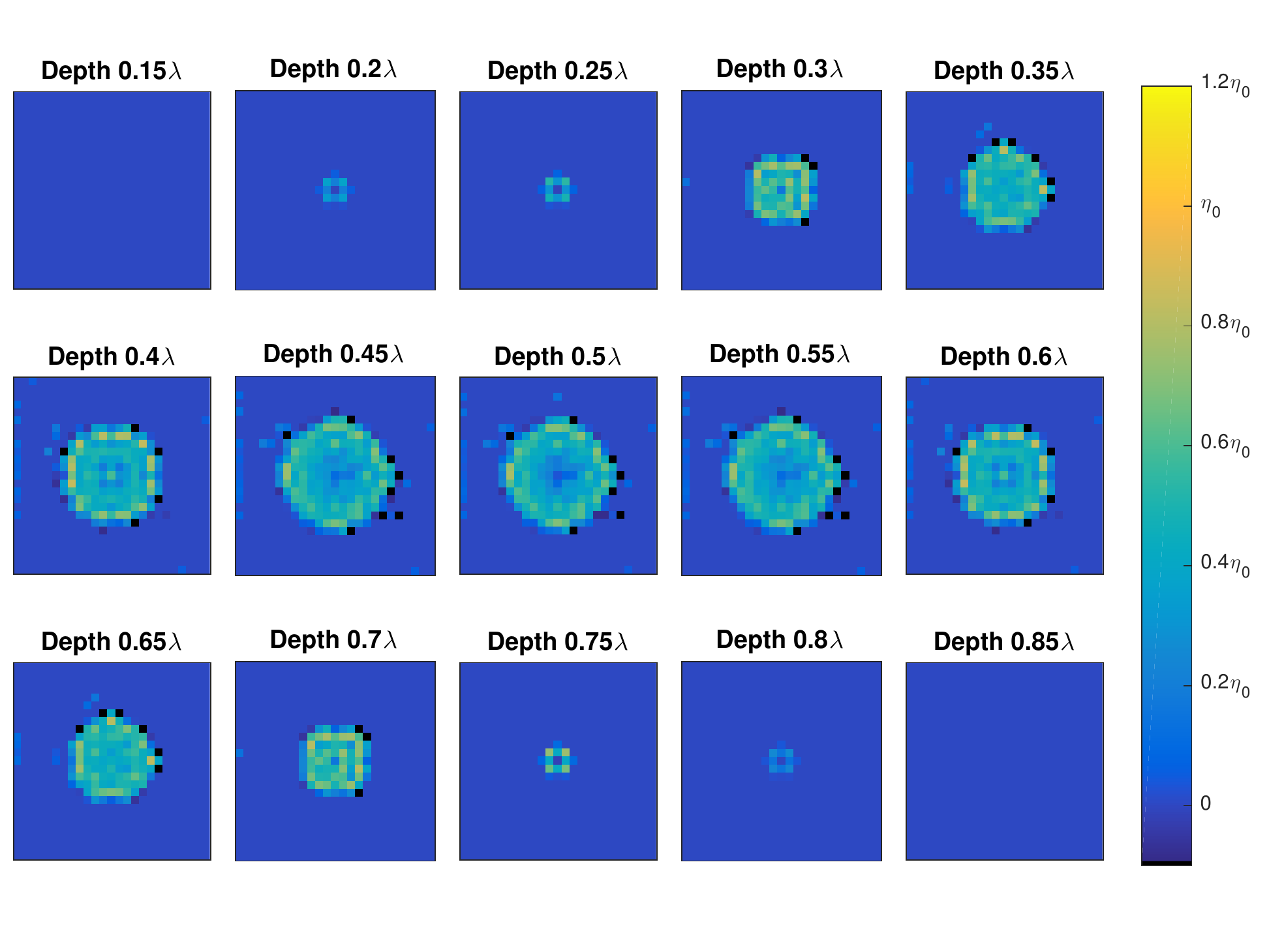}
\includegraphics[width=0.65\linewidth]{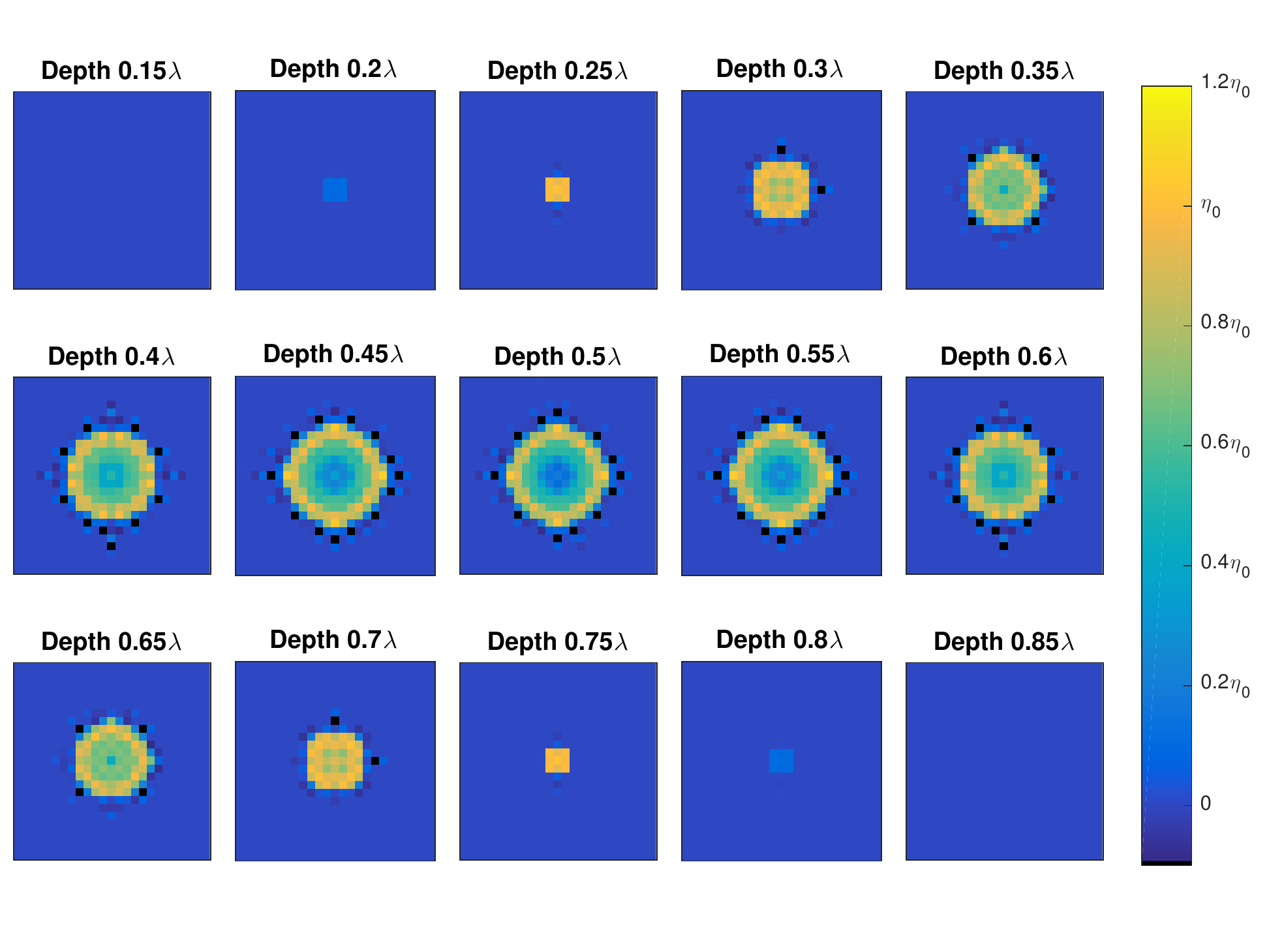}
\caption{Tomographic reconstructions of Model 2 with $\eta_0=0.09$. A comparison of the linear (top) and fully nonlinear IHT (bottom) algorithms is shown. The field of view in each slice is $5\lambda \times 5 \lambda$.
}
\label{fig:rec2_1}
\end{figure}

\begin{figure}[t]
\centering
$M=1$ \hspace{3.2cm} $M=3$ \hspace{3.2cm} $M=5$ \\
\includegraphics[width=0.3\linewidth]{L1-eps-converted-to.pdf}
\includegraphics[width=0.3\linewidth]{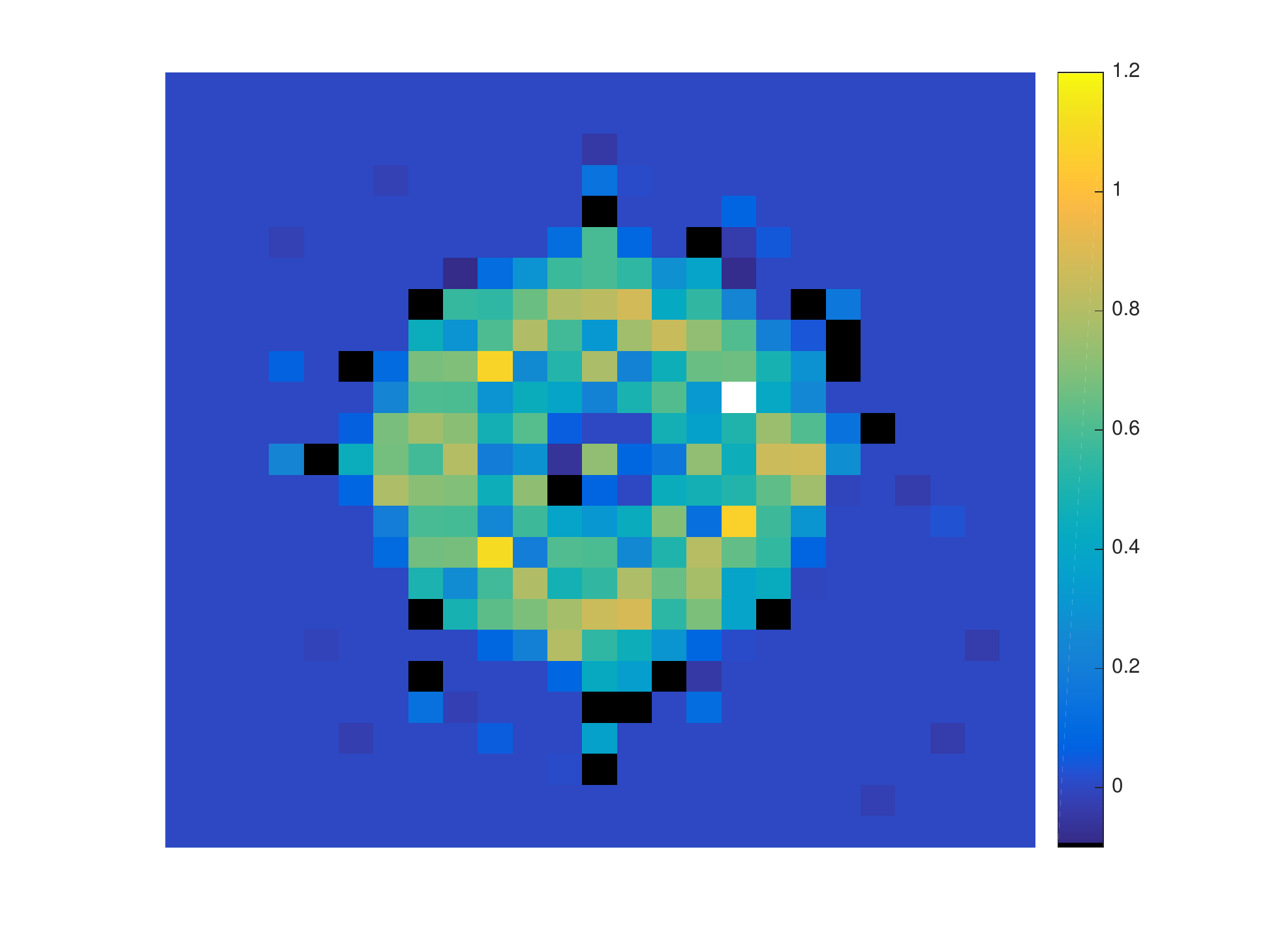} \includegraphics[width=0.3\linewidth]{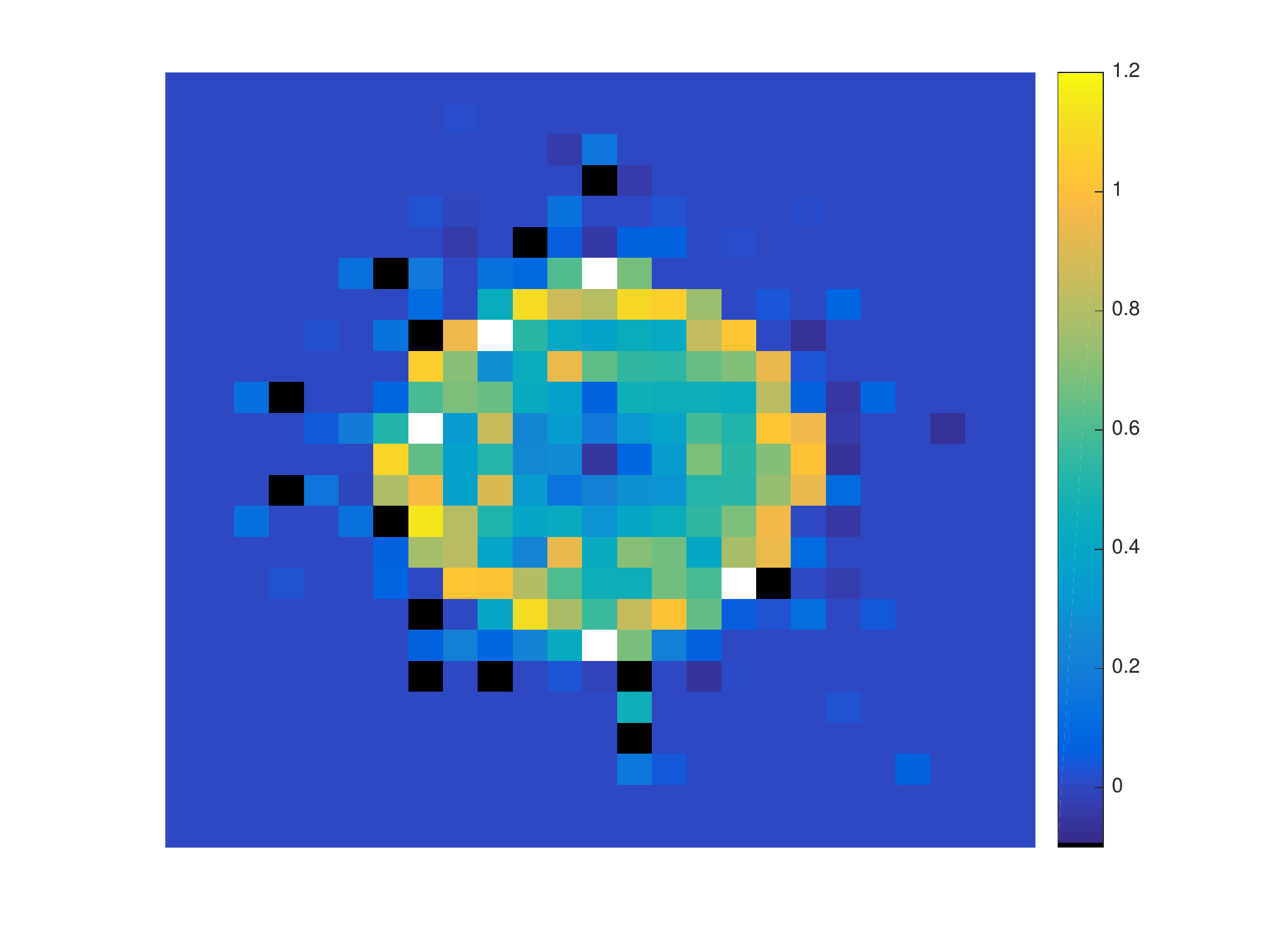} \\
$M=7$ \hspace{3.2cm} $M=9$ \hspace{3.2cm} $M=\infty$ \\
\includegraphics[width=0.3\linewidth]{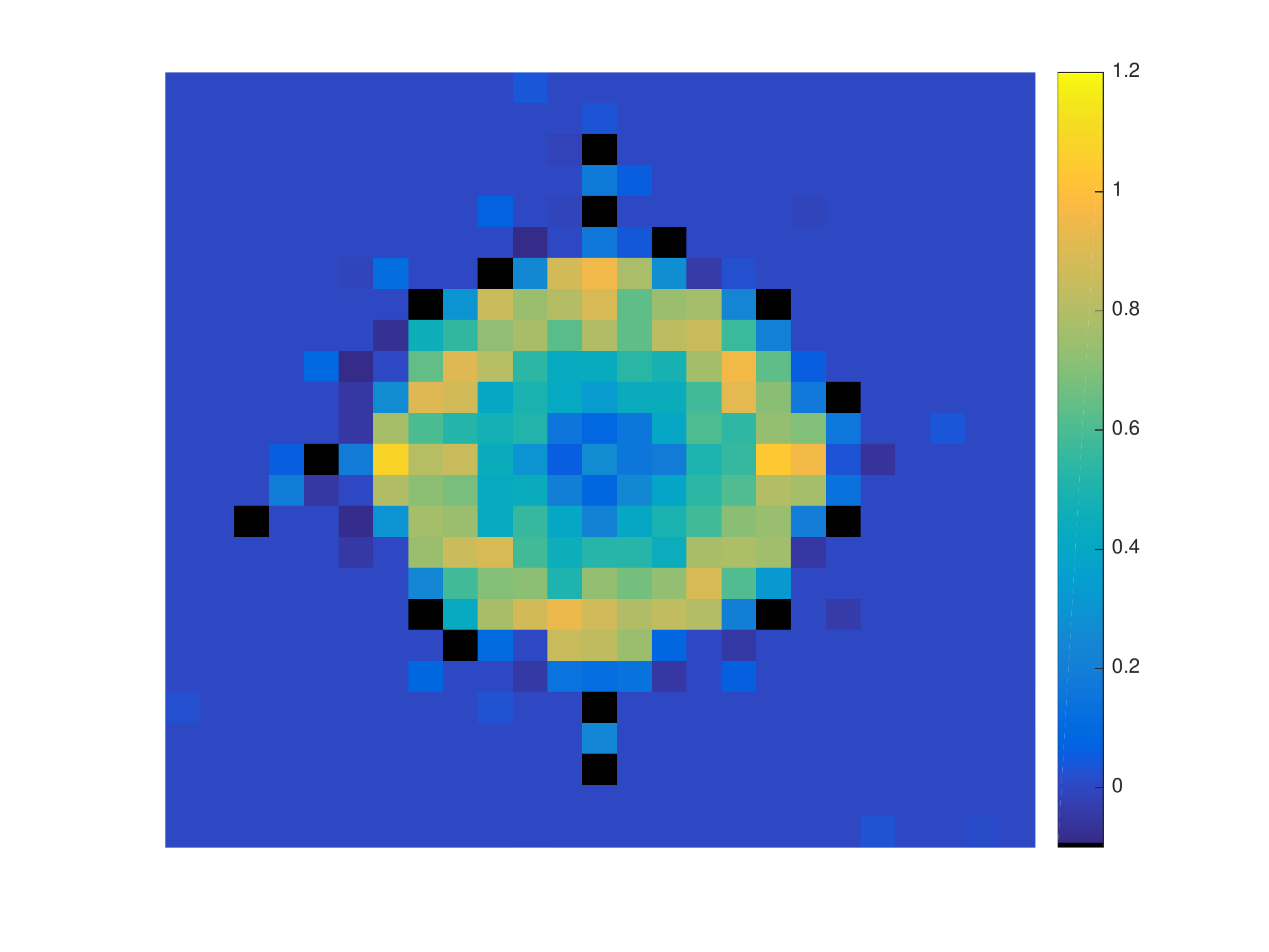}
\includegraphics[width=0.3\linewidth]{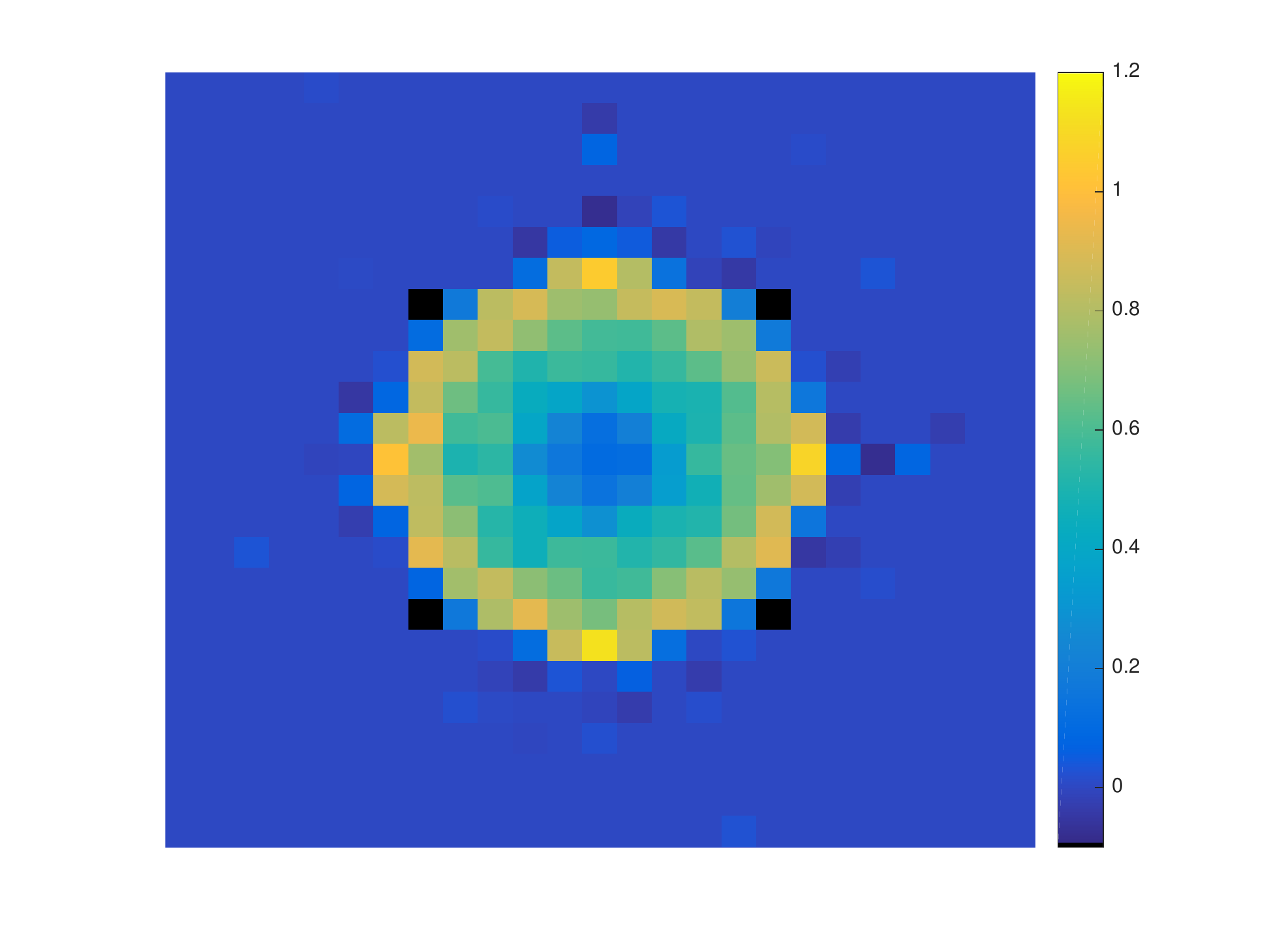}
\includegraphics[width=0.3\linewidth]{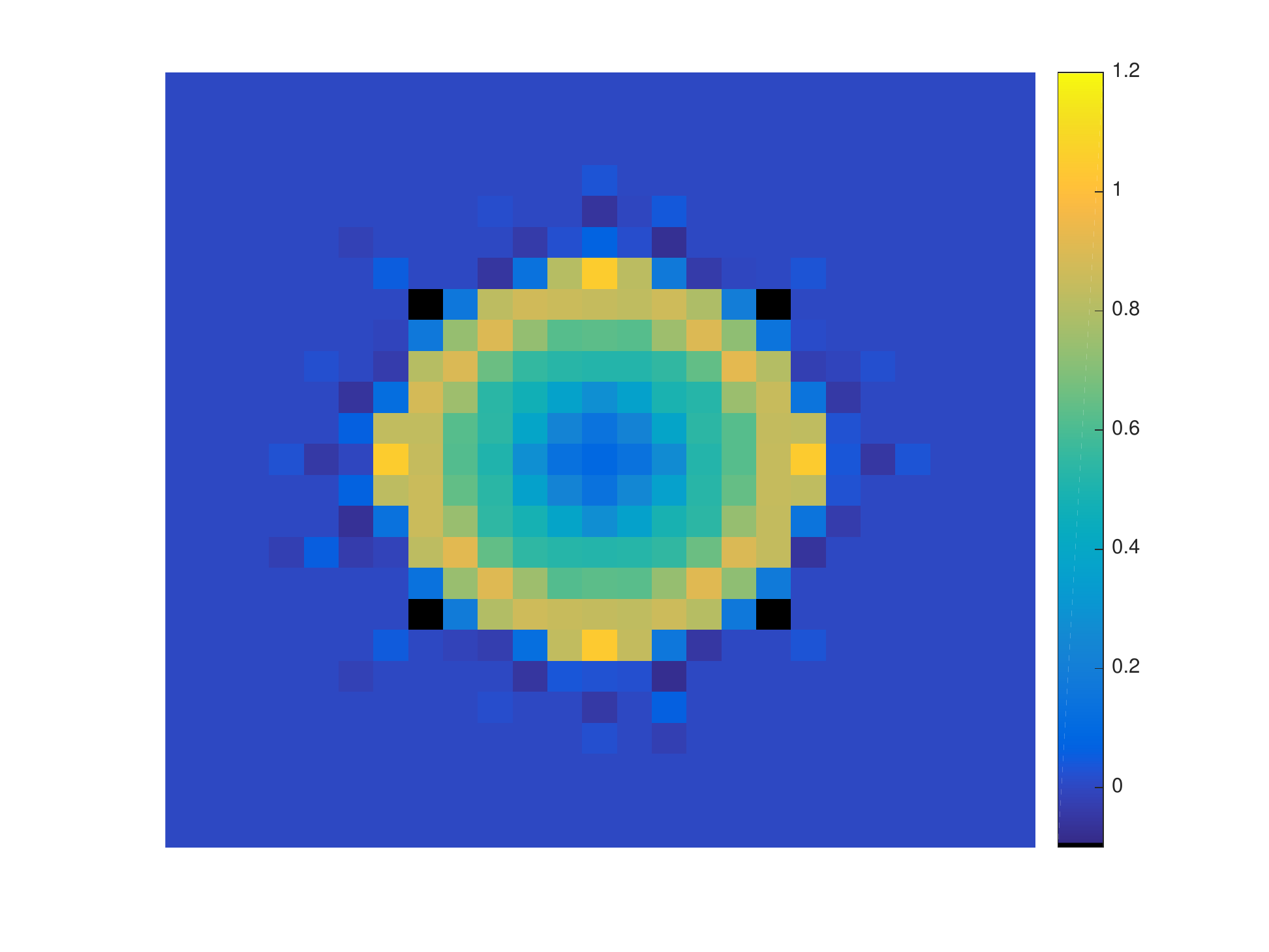}
\caption{The central slice of the reconstructions for Model 2 for $\eta_0=0.09$. A comparison of the nonlinear IHT for varying number of terms added to the linearization ($M$) is shown.  Here, the Born series converges naturally leading to improved reconstructions as more terms are added. The field of view is $5\lambda \times 5 \lambda$.
}
\label{fig:rec2}
\end{figure}

\begin{figure}[t]
\centering
\includegraphics[width=0.55\linewidth]{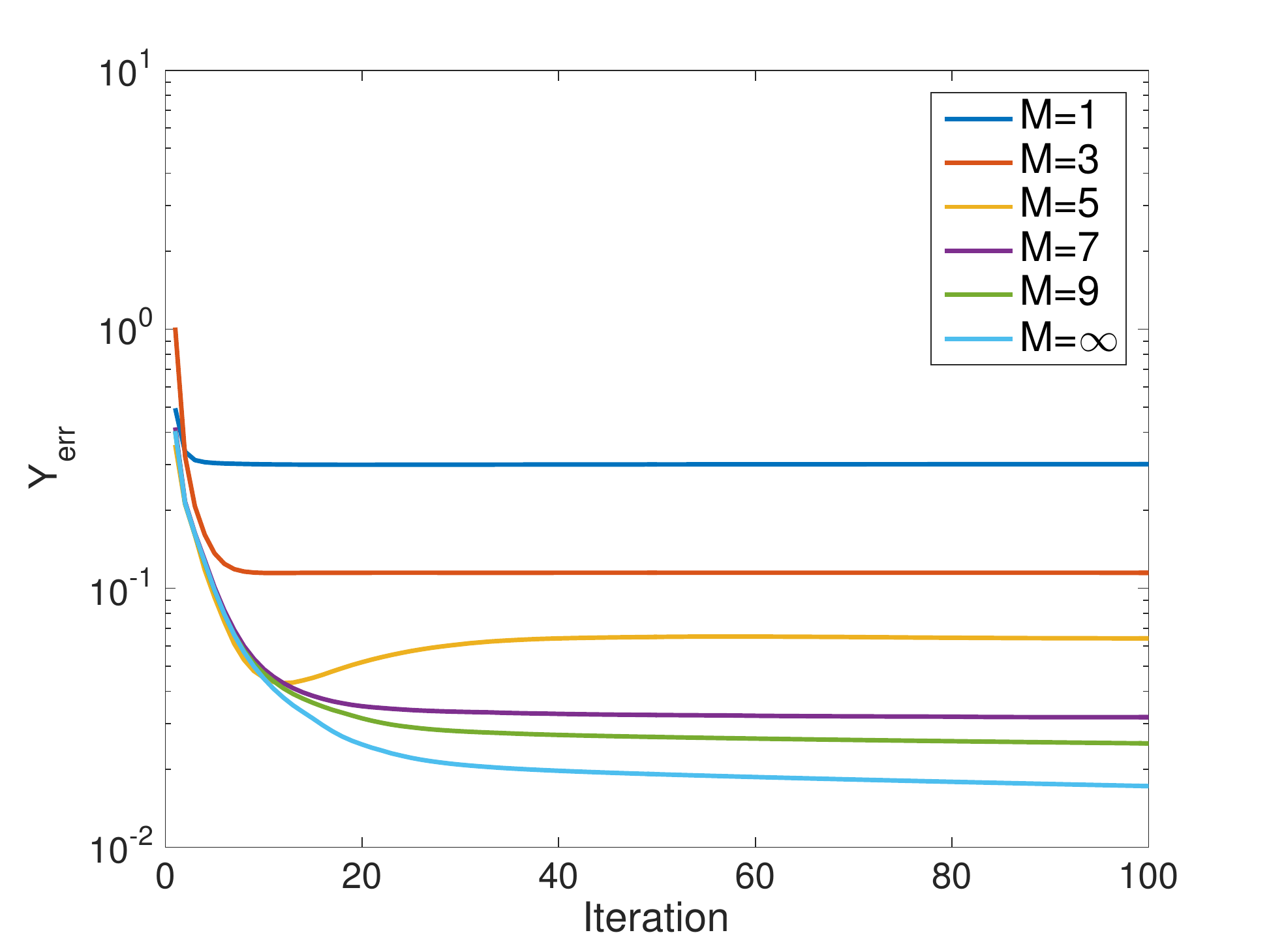}
\caption{Error $Y_{\rm err}$ for Model 2, corresponding to the images in Fig.~\ref{fig:rec2}.  
}
\label{fig:err2}
\end{figure}

Using the same experimental setup ($N_s=N_d=225$), $\Omega$ was discretized into 1,000 voxels, where $s$ voxels were chosen uniformly at random to have potential $\eta_0$.  50 iterations of the IHT algorithms were performed for $M=1,2,3,4,\infty$.  Each experiment was performed with 500 realizations with the hard thresholding limit set to the sparsity level $s$.  We define a successful reconstruction as {\it exactly} recovering the support of all $s$ voxels.   Fig.~\ref{fig:suc_plots} shows the success rate for two choices of $\eta_0$ with varying levels of sparsity.
\begin{figure}[t]
\centering
\begin{subfigure}[b]{0.45\textwidth}
                \centering
                \includegraphics[width=\textwidth]{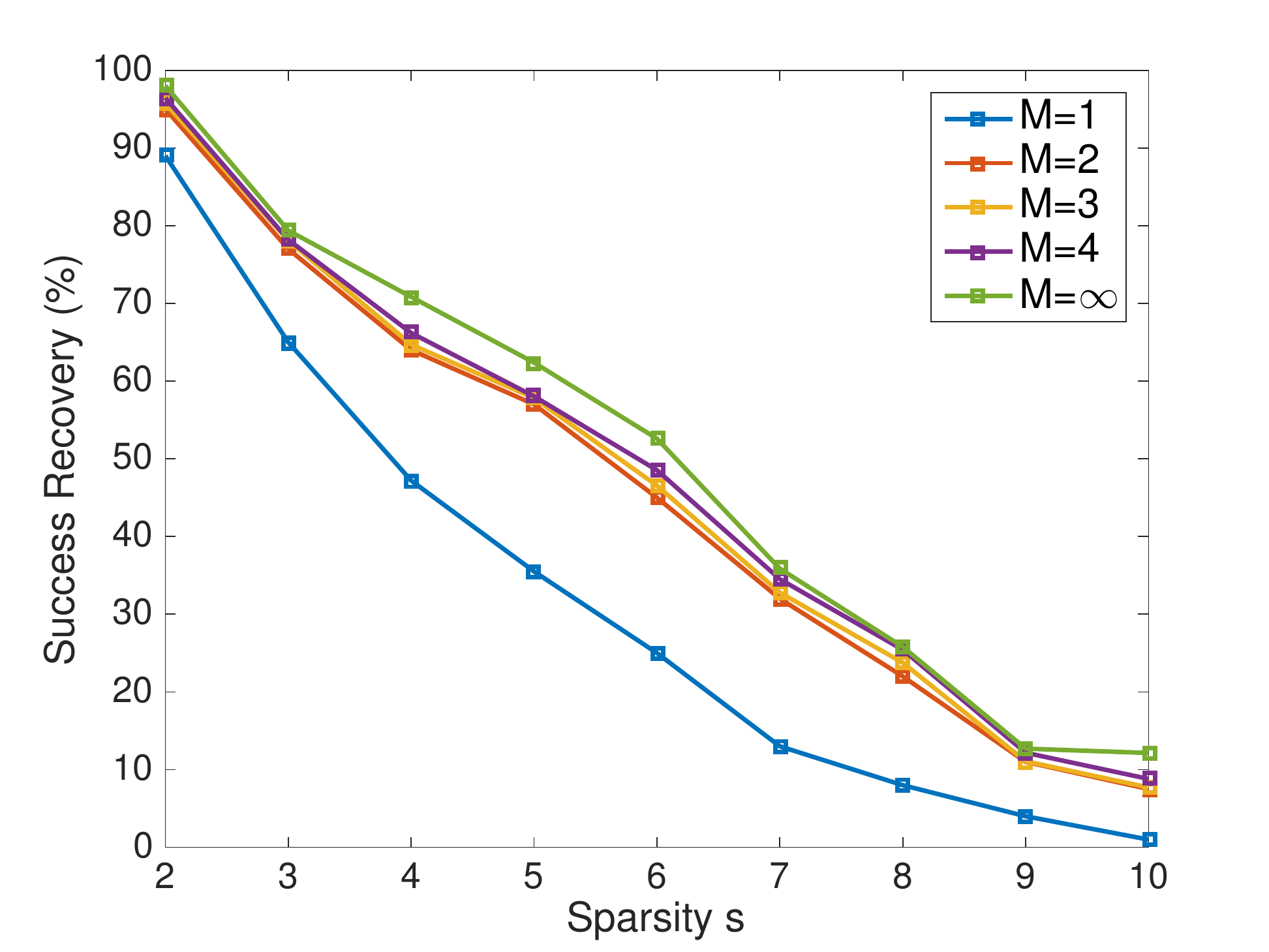}
                \caption{$\eta_0=0.05$}
        \end{subfigure}
\begin{subfigure}[b]{0.45\textwidth}
                \centering
                \includegraphics[width=\textwidth]{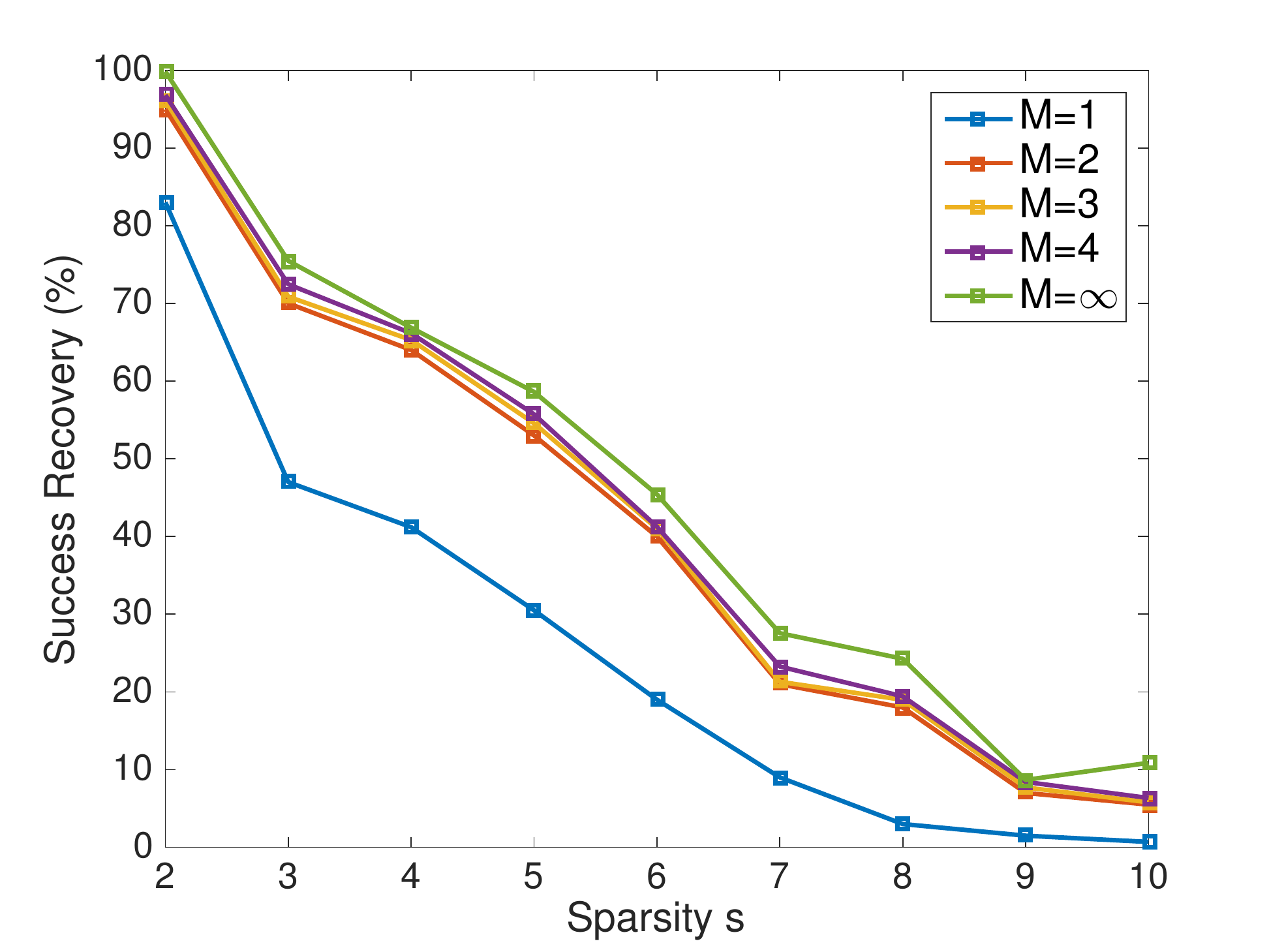}
                \caption{$\eta_0=0.1$}
        \end{subfigure}
\caption{Success rate of support recovery for various forms of the nonlinear iterative hard thresholding algorithm as a function of sparsity.  The left plot has $\eta_0=0.05$ and the right plot has $\eta=0.1$.
}
\label{fig:suc_plots}
\end{figure}
We note that since the targets are quite sparse, the second Born approximation accounts for the majority of the improvement of the nonlinear reconstruction over linear IHT.  

\section{Discussion}

We have considered the application of nonlinear IHT to inverse scattering. The convergence and error of the method was analyzed by means of coherence estimates and compared to numerical simulations. It was found that in the multiple-scattering regime, the performance of nonlinear IHT is superior to linear IHT. Moreover, numerical evidence suggests that the coherence estimates we have obtained are relatively conservative. Future work will be concerned with average case coherence estimates, which may be anticipated to lead to a more accurate characterization of the convergence of the nonlinear IHT algorithm. It would also be of interest to explore the role of near-field measurements in improving the resolution of reconstructed images. Other areas of future interest include the extension of our results to the setting of the Bremmer series, which is well adapted to the study of layered media and geophysical applications.

\appendix
\section{Proof of Theorem \ref{thm:coherence}} 
\label{apx:proof}

The proof follows the general ideas in~\cite{wang}.  
First, we assume without loss of generality that the columns of the sensing matrix are normalized so that their $\ell_2$ norms are unity. Then, we use the following lemmas:

\begin{lemma}
\label{lem:coherence}
Let $A\in\mathbb{C}^{m\times n}$ be normalized such that $\|A_i\|_2=1$ for all $i=1,\dots,n$.  Then,
\begin{equation}
\mu(A)= \sup_{x\in\mathbb{R}^n\setminus\{0\}} \frac{\|(I-A^*A)\Bx\|_\infty}{\|\Bx\|_1}
\end{equation}
\end{lemma}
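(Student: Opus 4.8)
The plan is to recognize the right-hand side as the operator norm of the matrix $M := I - A^*A$ induced by the $\ell_1$ norm on the domain and the $\ell_\infty$ norm on the target, and then to identify this operator norm with the max norm $\|M\|_{\max}$. First I would record two structural facts about $M$. Because the columns of $A$ are normalized, $(A^*A)_{jj} = \langle A_j, A_j\rangle = \|A_j\|_2^2 = 1$, so the diagonal of $M$ vanishes, while its off-diagonal entries are $M_{jk} = -(A^*A)_{jk}$ for $j \neq k$. Consequently $\|M\|_{\max} = \max_{j \neq k} |(A^*A)_{jk}| = \mu(A)$, using the characterization of coherence as the largest off-diagonal entry of the Gram matrix of a column-normalized matrix.

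It then remains to prove the general identity $\sup_{\mathbf{x} \neq 0} \|M\mathbf{x}\|_\infty / \|\mathbf{x}\|_1 = \|M\|_{\max}$, valid for any matrix $M$. For the upper bound I would estimate each coordinate of $M\mathbf{x}$ by $|(M\mathbf{x})_i| = \big|\sum_j M_{ij} x_j\big| \le \max_j |M_{ij}| \sum_j |x_j| \le \|M\|_{\max}\,\|\mathbf{x}\|_1$, and take the maximum over $i$ to conclude $\|M\mathbf{x}\|_\infty \le \|M\|_{\max}\,\|\mathbf{x}\|_1$. For the matching lower bound I would test against a standard basis vector: choosing indices $(i^*, j^*)$ attaining $\|M\|_{\max}$ and setting $\mathbf{x} = \mathbf{e}_{j^*}$, one has $\|\mathbf{x}\|_1 = 1$ and $\|M\mathbf{e}_{j^*}\|_\infty = \max_i |M_{ij^*}| \ge |M_{i^*j^*}| = \|M\|_{\max}$, so the ratio is at least $\|M\|_{\max}$. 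Combining the two bounds with the first paragraph gives $\sup_{\mathbf{x}\neq 0}\|M\mathbf{x}\|_\infty/\|\mathbf{x}\|_1 = \|M\|_{\max} = \mu(A)$.

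There is essentially no deep obstacle here; the only point requiring a moment's care is that the supremum in the statement ranges over real vectors $\mathbf{x} \in \mathbb{R}^n$, whereas $A$ may be complex. The upper bound holds verbatim for real $\mathbf{x}$, and the extremizer $\mathbf{e}_{j^*}$ used for the lower bound is itself real, so restricting to real vectors neither inflates nor deflates the supremum. I would therefore present the argument as the two-sided estimate above, remarking that the supremum is in fact attained at a coordinate vector.
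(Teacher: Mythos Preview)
Your proof is correct. The paper does not actually supply a proof of this lemma; it simply defers to the reference \cite{wang}. Your argument---identifying the right-hand side as the $\ell_1\to\ell_\infty$ operator norm of $M=I-A^*A$, observing that this operator norm equals $\|M\|_{\max}$ via the two-sided estimate with extremizer $\mathbf{e}_{j^*}$, and then noting that the vanishing diagonal of $M$ forces $\|M\|_{\max}=\mu(A)$---is the standard direct route and is complete as written, including the remark that the real-vector restriction is harmless because the extremizer is a coordinate vector.
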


\begin{lemma}
Let $\Bz_{n+1}=\Bx_{n}+\Phi^*_{\Bx_n}(\By-\Phi_{\Bx_n} \Bx_{n})$.  
Let $I^*$ be the index set of nonzero entries of $\Bx$, $I^n$ be the index set of the $s$ largest entries of $\Bz_n$, and  $I^n_+$ be the index set of the $s+1$ largest entries of $\Bz_n$.  Let $S_n=I^n_+\bigcup I^{n-1}\bigcup I^*$.  Let $\mathbf{w}_n$ be the subvector of $\Bz_n$ with components restricted to $S_n$. Then, the following inequality holds
\begin{equation}
\label{eq:ineq1}
\|\Bx_n-\Bx\|_1\le (3s+1)\|\mathbf{w}_n-\Bx\|_\infty \ . 
\end{equation}
\end{lemma}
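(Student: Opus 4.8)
The goal is to bound $\|\Bx_n - \Bx\|_1$ in terms of the $\ell_\infty$ distance between the restricted vector $\mathbf{w}_n$ and $\Bx$. The plan is to split $\|\Bx_n - \Bx\|_1$ according to where the supports of the various index sets live, and control each piece on the set $S_n$, whose cardinality I can bound. Since $|I^*| \le s$, $|I^{n-1}| \le s$, and $|I^n_+| = s+1$, a union bound gives $|S_n| \le 3s+1$. The factor $(3s+1)$ in the claim is exactly this cardinality bound, so the whole estimate should come from passing from an $\ell_\infty$ bound to an $\ell_1$ bound over a set of at most $3s+1$ indices, via $\|\mathbf{u}\|_1 \le |S|\,\|\mathbf{u}\|_\infty$ for a vector supported on $S$.

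**The key steps.**

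First I would argue that $\Bx_n - \Bx$ is supported on $S_n$. Indeed $\Bx$ is supported on $I^*$ and $\Bx_n = H_s(\Bz_n)$ is supported on $I^n \subseteq I^n_+ \subseteq S_n$; since both $I^* \subseteq S_n$ and $\mathrm{supp}(\Bx_n) \subseteq S_n$, the difference vanishes off $S_n$. Hence
\begin{equation}
\|\Bx_n - \Bx\|_1 = \sum_{i \in S_n} |(\Bx_n)_i - \Bx_i| \le (3s+1)\,\max_{i \in S_n} |(\Bx_n)_i - \Bx_i| \ .
\end{equation}
The second step is to replace $(\Bx_n)_i$ by $(\mathbf{w}_n)_i = (\Bz_n)_i$ for the indices in $S_n$. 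On the support $I^n$ of $\Bx_n$ we have $(\Bx_n)_i = (\Bz_n)_i = (\mathbf{w}_n)_i$ exactly, since thresholding preserves the retained entries. On the complementary indices $i \in S_n \setminus I^n$ we have $(\Bx_n)_i = 0$, so there I must show $|\Bx_i| \le \max_{j} |(\mathbf{w}_n)_j - \Bx_j|$ directly, or more carefully exploit the ordering defining $I^n_+$.

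**The main obstacle.**

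The delicate part is the comparison on the indices that were \emph{thresholded away}, i.e.\ $i \in S_n \setminus I^n$ where $(\Bx_n)_i = 0$ but $\Bx_i$ may be nonzero. Here I cannot just substitute $(\mathbf{w}_n)_i$; I must use the defining property of hard thresholding, namely that every retained coordinate of $\Bz_n$ dominates (in absolute value) every discarded one. The standard trick is to use the \emph{extra} coordinate in $I^n_+$ (the $(s+1)$st largest): since $\Bx$ is $s$-sparse and $|I^n| = s$, if some index $i \in I^*$ is discarded then some index $j \in I^n$ with $\Bx_j = 0$ is retained, and a swapping argument comparing $|(\Bz_n)_i|$ with $|(\Bz_n)_j|$ converts the discarded error into a term controlled by $2\max_j|(\mathbf{w}_n)_j - \Bx_j|$. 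I expect this exchange/counting argument — reconciling the $s$-sparsity of $\Bx$ with the top-$s$ selection, using the $(s+1)$st coordinate as slack — to be where the real work lies, and it is precisely the mechanism that the definition of $I^n_+$ (rather than just $I^n$) was introduced to support.
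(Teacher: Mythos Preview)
The paper does not actually prove this lemma; it defers to the reference~\cite{wang}. So your plan must be judged on its own, and as written it has a real gap.

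Your two-step decomposition---first bound $\|\Bx_n-\Bx\|_1\le(3s+1)\max_{i\in S_n}|(\Bx_n)_i-\Bx_i|$, then bound the max by $\|\mathbf{w}_n-\Bx\|_\infty$---fails at the second step. The intermediate inequality $\max_{i\in S_n}|(\Bx_n)_i-\Bx_i|\le\|\mathbf{w}_n-\Bx\|_\infty$ is simply false. Take $s=1$, $\Bx=(1,0)$, $\Bz_n=(0.4,0.5)$: then $\Bx_n=(0,0.5)$, $\max_i|(\Bx_n)_i-\Bx_i|=1$, but $\|\mathbf{w}_n-\Bx\|_\infty=0.6$. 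You yourself note that the swap argument only gives a factor~$2$ on the discarded coordinates; plugging that into your framework yields $2(3s+1)$, not $(3s+1)$.

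The repair is not to pass through the maximum at all. Split the $\ell_1$ sum \emph{before} taking any sup:
\[
\|\Bx_n-\Bx\|_1=\sum_{i\in I^n}|(\Bz_n)_i-\Bx_i|+\sum_{i\in I^*\setminus I^n}|\Bx_i|\ .
\]
The first sum has $s$ terms, each bounded by $\|\mathbf{w}_n-\Bx\|_\infty$ since $I^n\subseteq S_n$. For each $i$ in the second sum use $|\Bx_i|\le|(\Bz_n)_i-\Bx_i|+|(\Bz_n)_i|$ and the thresholding order $|(\Bz_n)_i|\le|(\Bz_n)_{j_0}|$ for any $j_0\in I^n_+\setminus I^*$ (nonempty since $|I^n_+|=s+1>|I^*|$), where $\Bx_{j_0}=0$ gives $|(\Bz_n)_{j_0}|=|(\Bz_n)_{j_0}-\Bx_{j_0}|\le\|\mathbf{w}_n-\Bx\|_\infty$. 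This costs a factor of~$2$ on at most $s$ terms, so the total is $(s+2s)\|\mathbf{w}_n-\Bx\|_\infty=3s\,\|\mathbf{w}_n-\Bx\|_\infty\le(3s+1)\|\mathbf{w}_n-\Bx\|_\infty$. Here the role of $I^n_+$ is exactly to guarantee an index $j_0$ with $\Bx_{j_0}=0$ lying in $S_n$, which is the mechanism you anticipated but did not wire into the estimate correctly.
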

\noindent The proofs of these lemmas can be found in \cite{wang}.

Using $\Bz_{n+1}=\Bx_n+\Phi^*_{\Bx_n}(\By-\Phi_{\Bx_n} \Bx_{n})$, we find that
\begin{align}
\Bz_{n+1}-\Bx = (I-\Phi^*_{\Bx_n}\Phi_{\Bx_n})(\Bx_n-\Bx)+\Phi^*_{\Bx_n}\big(\Phi(\Bx)-\Phi_{\Bx_n}\Bx+\Beps\big).
\end{align}
Let $A=I-\Phi^*_{\Bx_n}\Phi_{\Bx_n}$, where by hypothesis the linearized matrix $\Phi_{\Bx_n}$ has normalized columns.  Then,
\begin{align}
\|\Bz_{n+1}-\Bx\|_\infty= & \|(I-\Phi^*_{\Bx_n}\Phi_{\Bx_n})(\Bx_n-\Bx)+\Phi^*_{\Bx_n}\big(\Phi(\Bx)-\Phi_{\Bx_n}\Bx+\Beps\big)\|_\infty \\
\le & \|(I-\Phi^*_{\Bx_n}\Phi_{\Bx_n})(\Bx_n-\Bx)\|_\infty + \|\Phi^*_{\Bx_n}\big(\Phi(\Bx)-\Phi_{\Bx_n}\Bx+\Beps\big)\|_\infty \\
\le & \mu_0 \|\Bx_n-\Bx\|_1+\|\Phi^*_{\Bx_n}\Be_n\|_\infty \ ,
\end{align}
where the last inequality follows from Lemma \ref{lem:coherence}.  The above inequality holds for any subvector of $\Bz_{n+1}-\Bx$, so we have
\begin{equation}
\label{eq:ineq2}
\|\mathbf{w}_{n+1}-\Bx\|_\infty \le  \mu \|\Bx_n-\Bx\|_1+\|\Phi^*_{\Bx_n}\Be_n\|_\infty \ .
\end{equation}
By combining the inequalities in Eqs.~\eqref{eq:ineq1} and \eqref{eq:ineq2}, we obtain
\begin{equation}
\label{eq:ineq3}
\|\Bx_{n}-\Bx\|_1 \le \rho \|\Bx_{n-1}-\Bx\|_1+(3s+1)\|\Phi^*_{\Bx_n}\Be_n\|_\infty \ .
\end{equation} 
The theorem follows by iterating the above estimate. \hfill \qed

\section{RIP Analysis of Nonlinear IHT for Scattering} 
\label{apx:rip}

Here we prove a similar result to Theorem \ref{thm:fullTmatrix} using RIP rather than coherence estimates.  We first state a related result, which has been slightly simplified for our purposes.

\begin{theorem}\label{thm:B}\cite{blu_nliht}
Let $\By=\Phi(\Bx_*)$  with $\Bx_*$ $s$-sparse and let $\{\Bx_n\}$ be the sequence generated by the iteration 
\begin{equation}
\Bx_{n+1}=H_s\left(\Bx_{n}+\Phi^*_{\Bx_n}(\By-\Phi_{\Bx_n} \Bx_{n})\right) \ ,
\end{equation}
where $\Phi_{\Bx_n}$ is the linearization of $\Phi$ at $\Bx_n$.  Assume that for all $n\ge 1$, the linearizations $\Phi_{\Bx_n}$ satisfy the RIP for all $s$-sparse $\Bz_1$ and $\Bz_2$
\begin{equation}
\alpha\|\Bz_1-\Bz_2\|^2_2 \le \|\Phi_{\Bx_n}(\Bz_1-\Bz_2)\|^2_2 \le \beta\|\Bz_1-\Bz_2\|^2_2 \ .
\end{equation}
Further assume that $\Phi$ and $\Phi_{\Bx_n}$ satisfy
\begin{equation}
\|\Phi(\Bx_*)-\Phi(\Bx_n)-\Phi_{\Bx_n}(\Bx_*-\Bx_n)\|^2_2\le C\|\Bx_*-\Bx_n\|^2_2 \ .  
\end{equation}
Then, the following inequality holds for all $n\ge 1$
\begin{equation}
\|\Bx_*-\Bx_{n+1}\|_2^2 \le 2\left(\frac{\beta}{\alpha}-1+\frac{4}{\alpha}C \right) \|\Bx_*-\Bx_{n}\|_2^2 \ .
\end{equation}•
\end{theorem}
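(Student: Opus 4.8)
The plan is to read each nonlinear step as a single step of \emph{linear} IHT for the linearized system $\By=\Phi_{\Bx_n}\Bx$, in which $\Bx_*$ is the unknown and the linearization residual plays the role of additive noise. I would set $\Bz_{n+1}=\Bx_n+\Phi^*_{\Bx_n}(\By-\Phi_{\Bx_n}\Bx_n)$ so that $\Bx_{n+1}=H_s(\Bz_{n+1})$, and write $\Br_n=\Phi(\Bx_*)-\Phi(\Bx_n)-\Phi_{\Bx_n}(\Bx_*-\Bx_n)$, which the second hypothesis bounds by $\|\Br_n\|_2^2\le C\|\Bx_*-\Bx_n\|_2^2$. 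Using $\By=\Phi(\Bx_*)$ together with the consistency $\Phi_{\Bx_n}\Bx_n=\Phi(\Bx_n)$ of the linearizations used in Section~\ref{sec:invscatteringIHT}, the first step is to record the identity $\By-\Phi_{\Bx_n}\Bx_*=\Br_n$; that is, $\Bx_*$ solves the linearized problem up to the controlled residual $\Br_n$.

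Next I would invoke the optimality of hard thresholding. Since $\Bx_{n+1}$ is a best $s$-term approximation of $\Bz_{n+1}$ and $\Bx_*$ is $s$-sparse, $\|\Bz_{n+1}-\Bx_{n+1}\|_2\le\|\Bz_{n+1}-\Bx_*\|_2$; equivalently $\Bx_{n+1}$ minimizes the surrogate $\tilde{C}_n(\Bx)=\|\By-\Phi_{\Bx_n}\Bx\|_2^2-\|\Phi_{\Bx_n}(\Bx-\Bx_n)\|_2^2+\|\Bx-\Bx_n\|_2^2$ over $s$-sparse vectors, so $\tilde{C}_n(\Bx_{n+1})\le\tilde{C}_n(\Bx_*)$. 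Substituting $\By-\Phi_{\Bx_n}\Bx_*=\Br_n$ and expanding, the RIP is used to sandwich each occurrence of $\|\Bx-\Bx_n\|_2^2-\|\Phi_{\Bx_n}(\Bx-\Bx_n)\|_2^2$ between $(1-\beta)$ and $(1-\alpha)$ times $\|\Bx-\Bx_n\|_2^2$; the gap $\beta-\alpha$ produced here is the origin of the $\beta/\alpha-1$ factor. The lower RIP bound $\alpha\|\Bx_*-\Bx_{n+1}\|_2^2\le\|\Phi_{\Bx_n}(\Bx_*-\Bx_{n+1})\|_2^2$ (legitimate because the difference is at most $2s$-sparse) is what converts an image-norm estimate back into a bound on $\|\Bx_*-\Bx_{n+1}\|_2^2$ and introduces the $1/\alpha$; the residual enters through $\langle\Br_n,\Phi_{\Bx_n}(\cdot)\rangle$ and, after Cauchy--Schwarz and Young's inequality, contributes the $4C/\alpha$ term. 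Dividing the resulting image-norm inequality by $\alpha$ should deliver $\|\Bx_*-\Bx_{n+1}\|_2^2\le 2(\beta/\alpha-1+4C/\alpha)\|\Bx_*-\Bx_n\|_2^2$.

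I expect the main obstacle to be the constant bookkeeping rather than any single hard estimate. The cross term between the isometry defect and the residual, together with the auxiliary quantity $\|\Bx_{n+1}-\Bx_n\|_2$, must be absorbed by Young's inequality so that every contribution collapses onto a single multiple of $\|\Bx_*-\Bx_n\|_2^2$ with precisely the coefficient $2(\beta/\alpha-1+4C/\alpha)$, rather than the squared combination $[(\beta/\alpha-1)+\sqrt{C\beta}]^2$ that a naive triangle-inequality route would yield; keeping the inequality in squared-norm form throughout, and applying the lower RIP bound at the right moment, is what makes the clean additive constant appear. A secondary, standard subtlety is the sparsity order: the differences $\Bx_*-\Bx_n$ and $\Bx_*-\Bx_{n+1}$ and the union of the supports involved can be $3s$-sparse, so the RIP should really be invoked at order $3s$, and the hypothesis as stated (phrased for differences of $s$-sparse vectors) is the simplified version; I would either strengthen it accordingly or restrict the inner products to the relevant support sets. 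Finally, the consistency $\Phi_{\Bx_n}\Bx_n=\Phi(\Bx_n)$ must be kept in view, since without it the identity $\By-\Phi_{\Bx_n}\Bx_*=\Br_n$ gains an extra term that would have to be folded into the residual constant $C$.
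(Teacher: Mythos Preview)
The paper does not prove this theorem. It is quoted from \cite{blu_nliht} (note the citation in the theorem header) and used only as a black box in Appendix~B to derive the subsequent RIP result for the scattering model; there is no in-paper argument to compare against.

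That said, your plan is the right one and matches Blumensath's original argument in \cite{blu_nliht}: recast the nonlinear step as one step of linear IHT for $\By=\Phi_{\Bx_n}\Bx$ with noise $\Br_n=\Phi(\Bx_*)-\Phi(\Bx_n)-\Phi_{\Bx_n}(\Bx_*-\Bx_n)$, use the surrogate $\tilde C_n(\Bx)=\|\By-\Phi_{\Bx_n}\Bx\|_2^2-\|\Phi_{\Bx_n}(\Bx-\Bx_n)\|_2^2+\|\Bx-\Bx_n\|_2^2$ and the optimality $\tilde C_n(\Bx_{n+1})\le\tilde C_n(\Bx_*)$, then sandwich the isometry defects by $\alpha,\beta$ and absorb the cross term with $\Br_n$ via Cauchy--Schwarz/Young. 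Your two caveats are exactly the right ones. First, the RIP order: the differences $\Bx_*-\Bx_n$, $\Bx_*-\Bx_{n+1}$, $\Bx_{n+1}-\Bx_n$ are each at most $2s$-sparse, which is covered by the hypothesis as phrased (differences of $s$-sparse vectors), and the surrogate route avoids the $3s$-sparse combinations that a direct $(I-\Phi^*\Phi)$ estimate would need. Second, the consistency $\Phi_{\Bx_n}\Bx_n=\Phi(\Bx_n)$: this is indeed implicit in the simplified statement, and in the scattering application it holds by construction, since $\tilde A_n V_n B=A\bigl(\sum_{m=0}^{M-1}(V_n\Gamma)^m\bigr)V_nB$ is precisely the $M$th Born forward map evaluated at $V_n$ (and likewise for $M=\infty$).
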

We note that this theorem is conceptually similar to Theorem \ref{thm:coherence}, since we require the linearizations to obey a common RIP bound, as well as a bound on the error of the linearizations.  

We will assume that we are given RIP bounds for the sensing matrices $A$ and $B$. That is, we let $A$ and $B$ satisfy RIP bounds of order $s$ with constants $\delta_s^A$ and $\delta_s^B$, respectively.  By invoking the results of \cite{8171765}, the RIP constant for the linear matrix $\Phi$ defined by \eqref{eq:Kdef} is given by $\delta_s^2$, where $\delta_s=\max\{\delta_s^A,\delta_s^B\}$.  This result holds for linearized matrices of the form \eqref{eq:nlihtisp}. Now, given RIP bounds for the linear sensing matrices, we obtain a bound on the RIP constant for the linearizations.  Again, without loss of generality, we will use  \eqref{eq:nlihtisp} for the nonlinear IHT, and attach the extra factors in the linearization to the matrix $A$.  Let us first consider the linearization using the second Born approximation.  Results for the higher order terms follow immediately.  Next, we derive an RIP type bound for the matrix $A(I+VG)$, where $V$ is a diagonal matrix that is $s$-sparse. Using the triangle inequality, we have
\begin{equation}
(1-\|VG\|_2)^2\|\Bx\|_2^2 \le \|(I+VG)\Bx\|_2^2 \le (1+\|VG\|_2)^2\|\Bx\|_2^2 \ .
\end{equation}
Making use of this result, we obtain the inequalities
\begin{align}
(1-\|VG\|_2)^2(1-\delta_{2s}^A)\|\Bx\|_2^2 \le \|A(I+VG)\Bx\|^2_2 \le (1+\|VG\|_2)^2(1+\delta_{2s}^A)\|\Bx\|_2^2 \ ,
\end{align}
where we have introduced the RIP constant for $A$ of order $2s$.  Since $\Bx$ is an $s$-sparse vector and $V$ is $s$-sparse along its diagonal, the vector $(I+VG)\Bx$ is at most $2s$-sparse.  For higher order terms, $\|VG\|_2$ is replaced by the $\sum_{j=1}^M \|VG\|_2^j$.  Thus, for any order $M$, we have 
\begin{equation}
\label{eq:RIP_Ms}
(1-\delta_{2s}^A)\left(1-\sum_{j=1}^M \|VG\|_2^j\right)^2\|\Bx\|_2^2 \le \left\|\left(A\sum_{j=0}^M(VG)^j\right)\Bx\right\|_2^2 \le (1+\delta_{2s}^A)\left(1+\sum_{j=1}^M \|VG\|_2^j\right)^2\|\Bx\|_2^2 \ .
\end{equation}
This result gives us the constants $\alpha$ and $\beta$ in Theorem \ref{thm:B}.  In particular, by squaring the RIP constant, we have 
\begin{align}
&\alpha = 1-\left( (1-\delta_{2s})\left(1-\sum_{j=0}^M(VG)^j\right) ^2  -1\right)^2 \ , \\ 
 & \beta = 1+\left( (1+\delta_{2s})\left(1+\sum_{j=0}^M(VG)^j\right) ^2  -1\right)^2 \ .
\end{align}

Now, we bound the error term 
\begin{equation}
\|\Phi(\Bx_*)-\Phi(\Bx_n)-\Phi_{\Bx_n}(\Bx_*-\Bx_n)\| \ .  
\end{equation}
Introducing matrix notation, this term becomes 
\begin{equation}
\label{eq:B_err}
\|A(I-VG)^{-1}VB-A(I-V_nG)^{-1}VB\| \ ,
\end{equation}
which can be rewritten as
\begin{equation}
\|A(I-V_nG)^{-1}(V_n-V)(I-VG)^{-1}VB\| \ .
\end{equation}
Treating the ``vector'' as the diagonal matrix $V_n-V$, to compute an RIP bound, we are interested in the two matrices $A(I-V_nG)^{-1}$ and $\left((I-VG)^{-1}VB\right)^*$.  The RIP bound for the first matrix follows from Eq.~\eqref{eq:RIP_Ms}:
\begin{equation}
(1+\delta_{2s}^A)\left(\frac{1-2\|VG\|_2}{1-\|VG\|_2}\right)^2 \|\Bx\|_2^2 \le \|A(I-V_nG)^{-1}\Bx\|_2^2 \le (1+\delta_{2s}^A)\frac{1}{(1-\|VG\|_2)^2} \|\Bx\|_2^2  \ . 
\end{equation}
Similarly, for the second matrix, we have 
\begin{equation}
(1-\delta_{2s}^B)\left(\frac{m(1-2\|VG\|_2)}{1-\|VG\|_2}\right)^2 \|\Bx\|_2^2 \le \|B^*V((I-VG)^{-1})^*\Bx\|_2^2 \le (1+\delta_{2s}^B)\frac{\|V\|_{\infty}^2}{(1-\|VG\|_2)^2} \|\Bx\|_2^2  \ ,
\end{equation}
where $m$ is the smallest (in absolute value) nonzero entry of $V$. Once again using the results of \cite{8171765}, the RIP constant is given by 
\begin{align}
&\|A(I-V_nG)^{-1}(V_n-V)(I-VG)^{-1}VB\|^2_2 \le \left[1+\left(\frac{1+\delta_{2s}}{(1-\|VG\|_2)^2} -1\right)^2 \right]\|\Bv\|_\infty^2 \|\Bv-\Bv_n\|_2^2 , \\
\nonumber
\end{align}
which provides the constant $C$ in the theorem.  We can now summarize our results in the following theorem.

\begin{theorem}
Let $\By=\Phi(\Bv)$ be the scattering model, where $\Phi(\Bv)={\rm vec}(AV(I-\Gamma V)^{-1}B)$.  Let  $\gamma=\|\Gamma V\|_2<1/2$.  Suppose $A$ and $B$ satisfy the RIP on $2s$-sparse vectors with constants $\delta_{2s}^A$ and $\delta_{2s}^B$, respectively. Define $\delta_{2s}=\max\{\delta_{2s}^A,\delta_{2s}^B  \}$ and 
\begin{align}
 & \alpha = 1-\left( (1-\delta_{2s})\left(\frac{1-2\gamma}{1-\gamma} \right)^2  -1\right)^2 \ , \\ 
 & \beta = 1+\left(\frac{1+\delta_{2s}}{(1-\gamma)^2} -1\right)^2  ,\ \\
 & C = \beta\|\Bv\|_\infty^2 \ .
\end{align}
Then the nonlinear IHT algorithm converges if 
\begin{equation}
\frac{2}{3}(1+4\|\Bv\|_\infty^2)< \frac{\alpha}{\beta} \ .
\end{equation}
\end{theorem}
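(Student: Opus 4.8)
The plan is to recognize that this theorem is an \emph{assembly} result: every quantitative estimate it needs has already been prepared in the preceding part of this appendix, so what remains is to verify the three hypotheses of Theorem~\ref{thm:B} for the fully nonlinear ($M=\infty$) scattering linearization and then read off the contraction condition. The overall strategy is therefore: (i) produce the RIP constants $\alpha$ and $\beta$ for the linearized operator $\Phi_{\Bv_n}$; (ii) produce the linearization-error constant $C$; and (iii) substitute into the contraction factor of Theorem~\ref{thm:B} and simplify.

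First I would establish the RIP bounds for the linearization. In the $M=\infty$ case the relevant factor is $A(I-V_nG)^{-1}$, and specializing \eqref{eq:RIP_Ms} with the geometric sum $\sum_{j=1}^{\infty}\|V_nG\|_2^{j}=\gamma/(1-\gamma)$ gives
\begin{equation}
(1-\delta_{2s})\left(\frac{1-2\gamma}{1-\gamma}\right)^{2}\|\Bx\|_2^2 \le \|A(I-V_nG)^{-1}\Bx\|_2^2 \le (1+\delta_{2s})\frac{1}{(1-\gamma)^2}\|\Bx\|_2^2 \ .
\end{equation}
The hypothesis $\gamma<1/2$ is exactly what keeps the lower constant $(1-2\gamma)/(1-\gamma)$ positive, so that this is a genuine RIP bound rather than a vacuous one. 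Combining the flanking factors $A(I-V_nG)^{-1}$ and $B$ through the Hadamard product structure \eqref{eq:phiphinl}, and squaring the RIP constant as in \cite{8171765}, then yields precisely the stated $\alpha$ and $\beta$.

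Next I would bound the linearization error $\|\Phi(\Bv)-\Phi(\Bv_n)-\Phi_{\Bv_n}(\Bv-\Bv_n)\|_2^2$. Using the resolvent identity \eqref{eq:res}, this error equals $\|A(I-V_nG)^{-1}(V_n-V)(I-VG)^{-1}VB\|_2^2$; treating the diagonal matrix $V_n-V$ as the unknown ``vector'' reduces the task to RIP bounds for the two flanking matrices $A(I-V_nG)^{-1}$ and $\big((I-VG)^{-1}VB\big)^{*}$. Applying the product-RIP result of \cite{8171765} once more produces a bound with constant $C=\beta\|\Bv\|_\infty^2$, which is exactly the third hypothesis of Theorem~\ref{thm:B}. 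Finally, substituting $\alpha$, $\beta$, and $C=\beta\|\Bv\|_\infty^2$ into the contraction factor of Theorem~\ref{thm:B}, convergence holds precisely when that factor is strictly less than one, i.e.
\begin{equation}
2\left(\frac{\beta}{\alpha}-1+\frac{4}{\alpha}\,\beta\|\Bv\|_\infty^2\right)<1 \ ,
\end{equation}
and collecting the $\beta/\alpha$ terms and dividing by two turns this into the stated criterion $\tfrac{2}{3}(1+4\|\Bv\|_\infty^2)<\alpha/\beta$.

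The main obstacle is not this closing algebra but Steps~one and~two: correctly propagating the individual RIP constants of $A$ and $B$ through the nonlinear (resolvent) linearization and through the Hadamard product structure, which is where the squaring of the RIP constant and the invocation of \cite{8171765} must be handled carefully. Two points demand attention there: the restriction $\gamma<1/2$ is what makes the lower RIP bound nontrivial, and the estimates tacitly require uniform control of $\|V_nG\|_2$ along the iterates (so that the single constant $\gamma$ governs every linearization $\Phi_{\Bv_n}$), consistent with the standing assumption in Theorem~\ref{thm:B} that the RIP holds for all $n\ge1$.
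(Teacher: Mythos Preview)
Your proposal is correct and follows essentially the same route as the paper: you invoke Theorem~\ref{thm:B}, obtain $\alpha$ and $\beta$ by specializing \eqref{eq:RIP_Ms} to $M=\infty$ and squaring the RIP constant via \cite{8171765}, bound the linearization error through the resolvent identity \eqref{eq:res} to get $C=\beta\|\Bv\|_\infty^2$, and then rearrange the contraction condition. Your remarks on why $\gamma<1/2$ is needed and on the implicit uniformity in $n$ are apt; the only quibble is that the final algebraic step is a rearrangement (multiply out, move the constant, divide by $3\beta/\alpha$) rather than literally ``dividing by two,'' but the resulting inequality is correct.
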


\bibliographystyle{siamplain}
\bibliography{references2}

\begin{thebibliography}{10}

\bibitem{blu_aiht}
{\sc T.~Blumensath}, {\em Accelerated iterative hard thresholding}, Signal
  Processing, 92 (2012), pp.~752 -- 756.

\bibitem{blu_nliht}
{\sc T.~Blumensath}, {\em Compressed sensing with nonlinear observations and
  related nonlinear optimization problems}, IEEE Transactions on Information
  Theory, 59 (2013), pp.~3466--3474.

\bibitem{blu_iht}
{\sc T.~Blumensath and M.~E. Davies}, {\em Iterative thresholding for sparse
  approximations}, Journal of Fourier Analysis and Applications, 14 (2008),
  pp.~629--654.

\bibitem{blu_iht2}
{\sc T.~Blumensath and M.~E. Davies}, {\em Iterative hard thresholding for
  compressed sensing}, Applied and Computational Harmonic Analysis, 27 (2009),
  pp.~265 -- 274.

\bibitem{blu_niht}
{\sc T.~Blumensath and M.~E. Davies}, {\em Normalized iterative hard
  thresholding: Guaranteed stability and performance}, IEEE Journal of Selected
  Topics in Signal Processing, 4 (2010), pp.~298--309.

\bibitem{Borcea_2015}
{\sc L.~Borcea and I.~Kocyigit}, {\em Resolution analysis of imaging with
  $\ell\_1$ optimization}, SIAM Journal on Imaging Sciences, 8 (2015),
  pp.~3015--3050.

\bibitem{Born99a}
{\sc M.~Born and E.~Wolf}, {\em Principles of Optics (7th Ed)}, Cambridge
  University Press, 1999.

\bibitem{cakoni_colton}
{\sc F.~Cakoni and D.~Colton}, {\em Qualitative Methods in Inverse Scattering
  Theory: An Introduction}, Interaction of Mechanics and Mathematics, Springer
  Berlin Heidelberg, 2005.

\bibitem{cakoni_colton_again}
{\sc F.~Cakoni and D.~Colton}, {\em A Qualitative Approach to Inverse
  Scattering Theory}, Applied Mathematical Sciences, Springer US, 2013.

\bibitem{CandesTao2005}
{\sc E.~J. Cand\`es and T.~Tao}, {\em Decoding by linear programming}, IEEE
  Transactions on Information Theory, 51 (2005), pp.~4203--4215.

\bibitem{chadan}
{\sc K.~Chadan and P.~C. Sabatier}, {\em Inverse Problems in Quantum Scattering
  Theory}, Springer, 1st~ed., 1989.

\bibitem{colton_kress}
{\sc D.~Colton and R.~Kress}, {\em Integral equation methods in scattering
  theory}, Pure and applied mathematics, Wiley, 1983.

\bibitem{colton_kress_again}
{\sc D.~Colton and R.~Kress}, {\em Inverse Acoustic and Electromagnetic
  Scattering Theory}, Applied Mathematical Sciences, Springer Berlin
  Heidelberg, 1997.

\bibitem{Fanngiang_Remote}
{\sc A.~Fannjiang, T.~Strohmer, and P.~Yan}, {\em Compressed remote sensing of
  sparse objects}, SIAM Journal on Imaging Sciences, 3 (2010), pp.~595--618.

\bibitem{0266-5611-26-3-035008}
{\sc A.~C. Fannjiang}, {\em Compressive inverse scattering: I. high-frequency
  simo/miso and mimo measurements}, Inverse Problems, 26 (2010), p.~035008.

\bibitem{0266-5611-27-3-035013}
{\sc A.~C. Fannjiang}, {\em The music algorithm for sparse objects: a
  compressed sensing analysis}, Inverse Problems, 27 (2011), p.~035013.

\bibitem{isakov}
{\sc V.~Isakov}, {\em Inverse Problems for Partial Differential Equations},
  Applied Mathematical Sciences, Springer New York, 1997.

\bibitem{0266-5611-33-6-060301}
{\sc B.~Jin, P.~Maaß, and O.~Scherzer}, {\em Sparsity regularization in
  inverse problems}, Inverse Problems, 33 (2017), p.~060301.

\bibitem{doi:10.1137/1.9781611970944}
{\sc C.~Kelley}, {\em Iterative Methods for Linear and Nonlinear Equations},
  Society for Industrial and Applied Mathematics, 1995.

\bibitem{8171765}
{\sc S.~Khanna and C.~R. Murthy}, {\em On the restricted isometry of the
  columnwise khatri-rao product}, IEEE Transactions on Signal Processing, 66
  (2018), pp.~1170--1183.

\bibitem{10.2307/43693726}
{\sc K.~Kilgore, S.~Moskow, and J.~C. Schotland}, {\em Inverse born series for
  scalar waves}, Journal of Computational Mathematics, 30 (2012), pp.~601--614.

\bibitem{kirch}
{\sc A.~Kirsch}, {\em An Introduction to the Mathematical Theory of Inverse
  Problems}, Springer-Verlag, Berlin, Heidelberg, 1996.

\bibitem{5728925}
{\sc O.~Lee, J.~M. Kim, Y.~Bresler, and J.~C. Ye}, {\em Compressive diffuse
  optical tomography: Noniterative exact reconstruction using joint sparsity},
  IEEE Transactions on Medical Imaging, 30 (2011), pp.~1129--1142.

\bibitem{Tmatrix1}
{\sc H.~W. Levinson and V.~A. Markel}, {\em Solution of the nonlinear inverse
  scattering problem by $t$-matrix completion. i. theory}, Phys. Rev. E, 94
  (2016), p.~043317.

\bibitem{hadamard}
{\sc S.~Liu and O.~TRENKLER}, {\em Hadamard, khatri-rao, kronecker and other
  matrix products}, International Journal of Information and Systems Sciences,
  4 (2008).

\bibitem{NguyenShin2013}
{\sc T.~L.~N. Nguyen and Y.~Shin}, {\em Deterministic sensing matrices in
  compressive sensing: A survey}, The Scientific World Journal, 2013 (2013).

\bibitem{1973ApJ186705P}
{\sc E.~M. {Purcell} and C.~R. {Pennypacker}}, {\em {Scattering and Absorption
  of Light by Nonspherical Dielectric Grains}}, apj, 186 (1973), pp.~705--714.

\bibitem{RudelsonVershynin2008}
{\sc M.~Rudelson and R.~Vershynin}, {\em On sparse reconstruction from fourier
  and gaussian measurements}, Communications on Pure and Applied Mathematics,
  61 (2008), pp.~1025--1045.

\bibitem{wang}
{\sc Y.~Wang, J.~Zeng, Z.~Peng, X.~Chang, and Z.~Xu}, {\em Linear convergence
  of adaptively iterative thresholding algorithms for compressed sensing.},
  IEEE Trans. Signal Processing, 63 (2015), pp.~2957--2971.

\bibitem{PMC2858419}
{\sc D.~W. Winters, B.~D. Van~Veen, and S.~C. Hagness}, {\em The music
  algorithm for sparse objects: a compressed sensing analysis}, IEEE
  transactions on antennas and propagation, 58 (2010), pp.~145--154.

\end{thebibliography}
\end{document}